\documentclass[12pt,]{article}
\usepackage[margin=1in]{geometry}
\usepackage{amscd}
\usepackage{amssymb,amsmath,amsfonts,amsbsy,latexsym}
\usepackage{amsmath}
\usepackage{amsthm}
\usepackage{enumerate}
\usepackage{tikz}
\usetikzlibrary{external}
\usepackage{mathrsfs}
\usepackage{subcaption}
\usepackage{pgfplots}
\usepackage{wrapfig, framed, caption}
\usepackage{float}
\usetikzlibrary{arrows}
\usepackage[font=small,labelfont=bf]{caption}
\usepackage{xcolor}
\usepackage{mathtools}
\usepackage[colorlinks=true, linkcolor=blue, citecolor=blue,
pagebackref=true]{hyperref}
\usepackage{fouriernc}
\usepackage[cal=cm,scr=rsfs]{mathalfa}

\usepackage{ulem}
\usepackage{comment}
\usepackage[capitalise]{cleveref}

\makeatletter
\newcommand{\refcheckize}[1]{%
  \expandafter\let\csname @@\string#1\endcsname#1%
  \expandafter\DeclareRobustCommand\csname relax\string#1\endcsname[1]{%
    \csname @@\string#1\endcsname{##1}\@for\@temp\coloneqq##1\do{\wrtusdrf{\@temp}\wrtusdrf{{\@temp}}}}%
  \expandafter\let\expandafter#1\csname relax\string#1\endcsname
}

\newtheorem{theorem}{Theorem}[section]
\newtheorem*{theorem*}{Theorem}

\newtheorem*{theoremY*}{Theorem Y}

\newtheorem*{theoremAB*}{Theorem AB}

\newtheorem{corollary}{Corollary}[section]
\newtheorem*{corollary*}{Corollary}
\newtheorem{proposition}{Proposition}[section]
\newtheorem{lemma}{Lemma}[section]

\newtheorem*{claim*}{Claim}

\theoremstyle{definition}
\newtheorem{definition}{Definition}[section]
\theoremstyle{remark}
\newtheorem{remark}{Remark}[section]
\newtheorem*{remark*}{Remark}

\theoremstyle{remarks}

\theoremstyle{step}

\allowdisplaybreaks

\title{Metrical theory of signed Engel expansions}

\author{Can Wang\\ School of Mathematics and Statistics, Wuhan University \\  Wuhan, Hubei 430072, 
	PR China \\ email: wangcan10010@whu.edu.cn}
\begin{document}

\frenchspacing
\maketitle

\begin{abstract}
	Motivated by the Engel and Pierce expansions, we introduce a signed Engel expansion. We expand each $x\in(0,1)\setminus\mathbb{Q}$ uniquely as 
	$$x=\frac{\epsilon_{1}(x)}{d_{1}(x)}+\frac{\epsilon_{2}(x)}{d_{1}(x)d_{2}(x)}+\cdots+\frac{\epsilon_{n}(x)}{d_{1}(x)d_{2}(x)\cdots d_{n}(x)}+\cdots,$$
	where $\epsilon_{1}(x)\coloneqq1$ and $\epsilon_{n}(x)\in\left\{1,-1\right\}$ for $n\geq2$. The digit sequence $\left\{d_{n}(x)\right\}_{n\geq1}$ satisfying  $d_{n+1}(x)\geq d_{n}(x)+2$ when $\epsilon_{n+1}(x)=-\epsilon_{n}(x)$ forms a non-decreasing sequence of even positive integers tending to infinity. On the one hand, we obtain the law of large numbers, the central limit theorem and the law of the iterated logarithm regarding $d_{n}(x)$ and $\Delta_{n}(x)\coloneqq d_{n}(x)-d_{n-1}(x)\ (n\geq2)\ (\Delta_{1}(x)\coloneqq d_{1}(x))$. On the other hand, we prove a Borel--Bernstein theorem on the zero-one law on the Lebesgue measure of the set 
	$$\left\{x\in(0,1)\colon R_{n}(x)\geq\phi(n)\  \textnormal{ for infinity many } n\right\},$$ 
	where $R_{n}(x)\coloneqq\frac{d_{n}(x)}{d_{n-1}(x)}\ (n\geq2)\ (R_{1}(x)\coloneqq d_{1}(x))$ and $\phi$ is an arbitrary positive function defined on the set of positive integers.
\end{abstract}
  
	\textbf{Keywords: }Engel expansions, Pierce expansions, signed Engel expansions, metric theory, the Borel--Bernstein theorem
  
\textbf{Mathematics Subject Classification numbers: } Primary 11K55; Secondary 
28A80
%
%

\section{Introduction} \label{sec:intro}
There exists a variety of ways to represent a real number in the form of infinite series, such as continued fraction expansion \cite{MR161833}, Lüroth expansion \cite{MR1510205}, Cantor expansion \cite{cantor1869ueber}, Oppenheim expansion \cite{MR309877}, Engel expansion \cite{MR102496}, and Pierce expansion \cite{MR1521866,MR825872}. For more infinite series representations of real numbers, see Galambos' monograph \cite{MR568141}. Based on the Engel and Pierce expansions, we introduce a new infinite series representation for real numbers, which we term the signed Engel expansion. Prior to introducing this new representation, we provide a brief review of the Engel and Pierce expansions.\\
\indent The Engel expansion can be generated by the interval map $T^{E}\colon[0,1)\to[0,1)$ defined by 
\begin{equation}\nonumber
	T^{E}(0)\coloneq0,\quad\textnormal{and}\quad T^{E}(x)\coloneq\left\lceil\frac{1}{x}\right\rceil x-1\ \textnormal{ for }\ x\in(0,1),
\end{equation}
where $\lceil x\rceil$ denotes the smallest integer not less than $x$. For the image of $T^{E}$, see Fig.~\ref{fig:TEandTP}(\subref{fig:TE}).
\begin{figure}[h]
	\centering
	\begin{subfigure}[b]{0.45\textwidth}
		\centering
	\begin{tikzpicture}[scale=0.015]
		\draw[black] (0,0) -- (0,420);
		\draw[black] (420,0) -- (420,420);
		\draw[black] (0,420) -- (420,420);
		\draw[black] (0,0) -- (420,0);
		\draw[dashed] (0,0) -- (420,420);
		\draw[black] (210,0) -- (420,420);
		\draw[dashed] (210,0) -- (210,210);
		\draw[black] (140,0) -- (210,210);
		\draw[dashed] (140,0) -- (140,140);
		\draw[black] (105,0) -- (140,140);
		\draw[dashed] (105,0) -- (105,105);
		\draw[black] (84,0) -- (105,105);
		\draw[dashed] (84,0) -- (84,84);
		\draw[black] (70,0) -- (84,84);
		\draw[dashed] (70,0) -- (70,70);
		\draw[black] (60,0) -- (70,70);
		\draw[dashed] (60,0) -- (60,60);
		\draw[black] (52.5,0) -- (60,60);
		\draw (420,420) circle (4);
		\draw (210,210) circle (4);
		\draw (140,140) circle (4);
		\draw (105,105) circle (4);
		\draw (84,84) circle (4);
		\draw (70,70) circle (4);
		\draw (60,60) circle (4);
		\filldraw (210,0) circle (3);
		\filldraw (140,0) circle (3);
		\filldraw (105,0) circle (3);
		\filldraw (84,0) circle (3);
		\filldraw (70,0) circle (3);
		\filldraw (60,0) circle (3);
		\filldraw (0,0) circle (3);
		\coordinate[label=below:$1$](1) at (420,0);
		\coordinate[label=below:$\frac{1}{2}$](1/2) at (210,0);
		\coordinate[label=below:$\frac{1}{3}$](1/3) at (140,0);
		\coordinate[label=below:$\frac{1}{4}$](1/4) at (105,0);
		\coordinate[label=below:$\frac{1}{5}$](1/5) at (84,0);
		\coordinate[label=below:$\frac{1}{6}$](1/6) at (70,0);
		\coordinate[label=below:$\frac{1}{7}$](1/7) at (60,0);
		\coordinate[label=below:$0$](0) at (0,0);
		\coordinate[label=left:$1$](1) at (0,420);
	\end{tikzpicture}
	\caption{The map $T^{E}$}
	\label{fig:TE}
\end{subfigure}
\begin{subfigure}[b]{0.45\textwidth}
	\centering
		\begin{tikzpicture}[scale=0.015]
		\draw[black] (0,0) -- (0,420);
		\draw[black] (420,0) -- (420,420);
		\draw[black] (0,420) -- (420,420);
		\draw[black] (0,0) -- (420,0);
		\draw[dashed] (0,0) -- (420,420);
		\draw[black] (210,210) -- (420,0);
		\draw[dashed] (210,0) -- (210,210);
		\draw[black] (140,140) -- (210,0);
		\draw[dashed] (140,0) -- (140,140);
		\draw[black] (105,105) -- (140,0);
		\draw[dashed] (105,0) -- (105,105);
		\draw[black] (84,84) -- (105,0);
		\draw[dashed] (84,0) -- (84,84);
		\draw[black] (70,70) -- (84,0);
		\draw[dashed] (70,0) -- (70,70);
		\draw[black] (60,60) -- (70,0);
		\draw[dashed] (60,0) -- (60,60);
		\draw[black] (52.5,52.5) -- (60,0);
		\draw (420,0) circle (4);
		\draw (210,210) circle (4);
		\draw (140,140) circle (4);
		\draw (105,105) circle (4);
		\draw (84,84) circle (4);
		\draw (70,70) circle (4);
		\draw (60,60) circle (4);
		\filldraw (210,0) circle (3);
		\filldraw (140,0) circle (3);
		\filldraw (105,0) circle (3);
		\filldraw (84,0) circle (3);
		\filldraw (70,0) circle (3);
		\filldraw (60,0) circle (3);
		\filldraw (0,0) circle (3);
		\coordinate[label=below:$1$](1) at (420,0);
		\coordinate[label=below:$\frac{1}{2}$](1/2) at (210,0);
		\coordinate[label=below:$\frac{1}{3}$](1/3) at (140,0);
		\coordinate[label=below:$\frac{1}{4}$](1/4) at (105,0);
		\coordinate[label=below:$\frac{1}{5}$](1/5) at (84,0);
		\coordinate[label=below:$\frac{1}{6}$](1/6) at (70,0);
		\coordinate[label=below:$\frac{1}{7}$](1/7) at (60,0);
		\coordinate[label=below:$0$](0) at (0,0);
		\coordinate[label=left:$1$](1) at (0,420);
	\end{tikzpicture}
	\caption{The map $T^{P}$}
	\label{fig:TP}
	\end{subfigure}
	\caption{}
	\label{fig:TEandTP}
\end{figure}
For any $x\in(0,1)$, the digit sequence $\left\{d^{E}_{n}(x)\right\}_{n\geq1}$ of its Engel expansion is defined as
\begin{equation}\nonumber
	d^{E}_{1}(x)\coloneq
	\left\lceil\frac{1}{x}\right\rceil,\quad\textnormal{and}\quad d^{E}_{n+1}(x)\coloneq d^{E}_{1}\left(\left(T^{E}\right)^{n}(x)\right)\textnormal{ for all }n\in\mathbb{N},
\end{equation}
where $\left(T^{E}\right)^{n}$ denotes the $n$-th iteration
of $T^{E}$. 
Then, every rational number $x\in(0,1)$ can be written as a finite Engel expansion of the form
$$x=\frac{1}{d^{E}_{1}(x)}+\frac{1}{d^{E}_{1}(x)d^{E}_{2}(x)}+\cdots+\frac{1}{d^{E}_{1}(x)d^{E}_{2}(x)\cdots d^{E}_{n}(x)},$$
and every irrational number $x\in(0,1)$ can be written as an infinite Engel expansion in the form of
$$x=\frac{1}{d^{E}_{1}(x)}+\frac{1}{d^{E}_{1}(x)d^{E}_{2}(x)}+\cdots+\frac{1}{d^{E}_{1}(x)d^{E}_{2}(x)\cdots d^{E}_{n}(x)}+\cdots.$$
It can be checked that the digit sequence satisfies $2\leq d^{E}_{1}(x)\leq d^{E}_{2}(x)\leq\cdots\leq d^{E}_{n}(x)$ for all $n\in\mathbb{N}$. Moreover, $d^{E}_{n}(x)\to\infty$ as $n\to\infty$ if $x$ is irrational. For further details on Engel expansion, we refer to \cite[p. 7]{MR102496} and \cite[p. 17]{MR568141}.\\
\indent  Define the interval map $T^{P}\colon[0,1)\to[0,1)$ related to Pierce expansion as
\begin{equation}\nonumber
	T^{P}(0)\coloneq0,\quad\textnormal{and}\quad T^{E}(x)\coloneq1-\left\lfloor\frac{1}{x}\right\rfloor x\ \textnormal{ for }\ x\in(0,1),
\end{equation}
where $\lfloor x\rfloor$ denotes the largest integer not exceeding $x$. See Fig.~\ref{fig:TEandTP}(\subref{fig:TP}) for the image of $T^{P}$.
Similarly, for any $x\in(0,1)$, the digit sequence $\left\{d^{P}_{n}(x)\right\}_{n\geq1}$ of its Pierce expansion is defined as
\begin{equation}\nonumber
	d^{P}_{1}(x)\coloneq
	\left\lfloor\frac{1}{x}\right\rfloor,\quad\textnormal{and}\quad d^{P}_{n+1}(x)\coloneq d^{P}_{1}\left(\left(T^{P}\right)^{n}(x)\right)\textnormal{ for all }n\in\mathbb{N},
\end{equation}
where $\left(T^{P}\right)^{n}$ denotes $n$-th iteration of $T^{P}$. Then, every rational number $x\in(0,1)$ has a finite Pierce expansion of the form
$$x=\frac{1}{d^{P}_{1}(x)}+\frac{-1}{d^{P}_{1}(x)d^{P}_{2}(x)}+\cdots+\frac{\left(-1\right)^{n-1}}{d^{p}_{1}(x)d^{P}_{2}(x)\cdots d^{p}_{n}(x)},$$
and every irrational number $x\in(0,1)$ admits an infinite Pierce expansion of the form
$$x=\frac{1}{d^{P}_{1}(x)}+\frac{-1}{d^{P}_{1}(x)d^{P}_{2}(x)}+\cdots+\frac{\left(-1\right)^{n-1}}{d^{P}_{1}(x)d^{P}_{2}(x)\cdots d^{P}_{n}(x)}+\cdots.$$
It can be checked that $1\leq d^{P}_{1}(x)<d^{P}_{2}(x)<\cdots<d^{P}_{n}(x)$ for all $n\in\mathbb{N}$. Moreover, if $x$ is rational and Pierce expansion of $x$ has exactly $n\ (\geq2)$ terms then $d^{P}_{n}(x)\geq d^{P}_{n-1}(x)+2$, and if $x$ is irrational then $d^{P}_{n}(x)\to\infty$ as $n\to\infty$. For more details, we refer to \cite[p. 23-24]{MR825872}.\\
\indent Now, let us present the signed Engel expansion. Define an interval map $T\colon[0,1)\to[0,1)$ as
\begin{equation}\nonumber
	Tx\coloneq
	\begin{cases}
		\left\lceil\frac{1}{x}\right\rceil x-1, & \text{ if }\ x\in\left(\frac{1}{2k},\frac{1}{2k-1}\right),\\
		(-1)(\left\lfloor\frac{1}{x}\right\rfloor x-1), & \text{ if }\ x\in\left(\frac{1}{2k+1},\frac{1}{2k}\right),\\
		0, & \text{ if }\ x\in\left\{0\right\}\cup\left\{\frac{1}{n}:n\in\mathbb{N}\setminus\left\{1\right\}\right\},
	\end{cases}
\end{equation}
where $k\in\mathbb{N}$. The map $T$ is illustrated in \cref{fig:T}.
\begin{figure}[h]
	\centering
	\begin{tikzpicture}[scale=0.016]
		\draw[black] (0,0) -- (0,420);
		\draw[black] (420,0) -- (420,420);
		\draw[black] (0,420) -- (420,420);
		\draw[black] (0,0) -- (420,0);
		\draw[dashed] (0,0) -- (420,420);
		\draw[black] (210,0) -- (420,420);
		\draw[dashed] (210,0) -- (210,210);
		\draw[black] (140,140) -- (210,0);
		\draw[dashed] (140,0) -- (140,140);
		\draw[black] (105,0) -- (140,140);
		\draw[dashed] (105,0) -- (105,105);
		\draw[black] (84,84) -- (105,0);
		\draw[dashed] (84,0) -- (84,84);
		\draw[black] (70,0) -- (84,84);
		\draw[dashed] (70,0) -- (70,70);
		\draw[black] (60,60) -- (70,0);
		\draw[dashed] (60,0) -- (60,60);
		\draw[black] (52.5,0) -- (60,60);
		\draw (420,420) circle (4);
		\draw (140,140) circle (4);
		\draw (84,84) circle (4);
		\draw (60,60) circle (4);
		\filldraw (140,0) circle (4);
		\filldraw (84,0) circle (4);
		\filldraw (60,0) circle (4);
		\filldraw (0,0) circle (4);
		\coordinate[label=below:$1$](1) at (420,0);
		\coordinate[label=below:$\frac{1}{2}$](1/2) at (210,0);
		\coordinate[label=below:$\frac{1}{3}$](1/3) at (140,0);
		\coordinate[label=below:$\frac{1}{4}$](1/4) at (105,0);
		\coordinate[label=below:$\frac{1}{5}$](1/5) at (84,0);
		\coordinate[label=below:$\frac{1}{6}$](1/6) at (70,0);
		\coordinate[label=below:$\frac{1}{7}$](1/7) at (60,0);
		\coordinate[label=below:$0$](0) at (0,0);
		\coordinate[label=left:$1$](1) at (0,420);
	\end{tikzpicture}
	\caption{The map $T$}
	\label{fig:T}
\end{figure}
For any $x\in(0,1)$ and $k\in\mathbb{N}$, define
\begin{equation}\nonumber
	d_{1}(x)\coloneq
	\begin{cases}
		\left\lceil\frac{1}{x}\right\rceil, & \text{ if }\ x\in\left[\frac{1}{2k},\frac{1}{2k-1}\right),\\
		\left\lfloor\frac{1}{x}\right\rfloor, & \text{ if }\ x\in\left[\frac{1}{2k+1},\frac{1}{2k}\right),
	\end{cases}
	\quad\textnormal{and}\quad
	s_{1}(x)\coloneq
	\begin{cases}
		1, & \text{ if }\ x\in\left[\frac{1}{2k},\frac{1}{2k-1}\right),\\
		-1, & \text{ if }\ x\in\left[\frac{1}{2k+1},\frac{1}{2k}\right).
	\end{cases}
\end{equation}
Let $s_{n+1}(x)\coloneq s_{1}(T^{n}x)$ for all $n\in\mathbb{N}$, where $T^{n}$ denotes the $n$-th iteration of $T$. Then, we define respectively the digit sequence $\left\{d_{n}(x)\right\}_{n\geq1}$ and the sign sequence $\left\{\epsilon_{n}(x)\right\}_{n\geq1}$ of the signed Engel expansion of $x$ as
\begin{equation}\nonumber
	d_{n+1}(x)\coloneq d_{1}(T^{n}x),
\end{equation}
and
\begin{equation}\nonumber
	\epsilon_{1}(x)\coloneq1,\ \ \epsilon_{n+1}(x)\coloneq\prod_{k=1}^{n}s_{k}(x).
\end{equation}
With these notations, for each $x\in(0,1)$, we have $Tx=s_{1}(x)\left(d_{1}(x)x-1\right)$. So,
\begin{equation}\label{eq:recursive formula of signed Engel expansion}
	x=\frac{\epsilon_{1}(x)}{d_{1}(x)}+\frac{s_{1}(x)}{d_{1}(x)}Tx.
\end{equation}
If $Tx=0$, then $x=\frac{1}{d_{1}(x)}$. Otherwise, $Tx\neq 0$, and we can replace $x$ with $Tx$ in \cref{eq:recursive formula of signed Engel expansion} to obtain 
$$x=\frac{\epsilon_{1}(x)}{d_{1}(x)}+\frac{s_{1}(x)}{d_{1}(x)}\left(\frac{\epsilon_{1}(Tx)}{d_{1}(Tx)}+\frac{s_{1}(Tx)}{d_{1}(Tx)}T^{2}x\right)=\frac{\epsilon_{1}(x)}{d_{1}(x)}+\frac{\epsilon_{2}(x)}{d_{1}(x)d_{2}(x)}+\frac{\epsilon_{3}(x)}{d_{1}(x)d_{2}(x)}T^{2}x.$$
Assume that $T^{1}x\neq0,\cdots,T^{n-1}x\neq0$ for some $n\geq2$. Then, by performing similar iterative processes, we have
\begin{equation}\label{eq:signed Engel expansion}
	x=\frac{\epsilon_{1}(x)}{d_{1}(x)}+\frac{\epsilon_{2}(x)}{d_{1}(x)d_{2}(x)}+\cdots+\frac{\epsilon_{n}(x)}{d_{1}(x)d_{2}(x)\cdots d_{n}(x)}+\frac{\epsilon_{n+1}(x)}{d_{1}(x)d_{2}(x)\cdots d_{n}(x)}T^{n}x.
\end{equation}
If $T^{n}x=0$, then the iteration terminates and the last term on the right-hand side of \cref{eq:signed Engel expansion} vanishes. Otherwise, that is, $T^{1}x\neq0,\cdots,T^{n-1}x\neq0$, and $T^{n}x\neq0$, then we can perform a further iteration on \cref{eq:signed Engel expansion} by replacing $x$ with $T^{n}x$ in \cref{eq:recursive formula of signed Engel expansion}.
Suppose that $T^{n}x\neq 0$ for all $n\in\mathbb{N}$. Then the above iterative process can continue constantly. By the algorithm above, we see that $x\in\mathbb{Q}\cap(0,1)$ if and only if $x$ can be represented uniquely as
\begin{equation}\label{eq:finite signed Engel expansion}
	x=\frac{\epsilon_{1}(x)}{d_{1}(x)}+\frac{\epsilon_{2}(x)}{d_{1}(x)d_{2}(x)}+\cdots+\frac{\epsilon_{n}(x)}{d_{1}(x)d_{2}(x)\cdots d_{n}(x)},
\end{equation}
and $x\in(0,1)\setminus\mathbb{Q}$ if and only if $x$ can be written uniquely in the form of
\begin{equation}\label{eq:infinite signed Engel expansion}
	x=\frac{\epsilon_{1}(x)}{d_{1}(x)}+\frac{\epsilon_{2}(x)}{d_{1}(x)d_{2}(x)}+\cdots+\frac{\epsilon_{n}(x)}{d_{1}(x)d_{2}(x)\cdots d_{n}(x)}+\cdots,
\end{equation}
where $\epsilon_{1}(x)=1$ and $\epsilon_{n}\in\left\{1,-1\right\}$ for any $n\geq2$. Further, the sequence $\left\{d_{n}(x)\right\}_{n\geq1}$ satisfying  $d_{n+1}(x)\geq d_{n}(x)+2$ if $\epsilon_{n+1}(x)=-\epsilon_{n}(x)$, forms a non-decreasing sequence of positive integers and if $x$ is irrational, then $d_{n}(x)\to\infty$ as $n\to\infty$. See \cref{pro:signed Engel Expansion} in \cref{preliminaries}.\\
\indent Let us return to the Engel expansion. Borel \cite{MR23007} asserted that $\lim_{n\to\infty}\frac{\log d_{n}^{E}(x)}{n}=1$ holds for Lebesgue almost all $x\in(0,1)$. L\'{e}vy \cite{MR23008} announced a central limit theorem and a law of the iterated logarithm for the digit sequence $\left\{ d_{n}^{E}\right\}_{n\geq1}$, and also sketched the reasoning behind the three limit theorems. By noting that the digit sequence $\left\{d_{n}^{E}\right\}_{n\geq1}$
forms a time-homogeneous Markov chain, Erd\H os, R\'{e}nyi and Sz\"{u}sz \cite{MR102496} provided detailed proofs for these limit theorems. Williams \cite{MR334317} provided a more concise and fundamental proof for these limit theorems by replacing $\left\{d_{n}^{E}\right\}_{n\geq1}$ with a new sequence that shares the same initial distribution and one step transition probability. Galambos \cite{MR1180497} extended the limit theorems to general Oppenheim expansions.\\
\indent Regarding signed Engel expansion, we obtain the following analogous limit theorems. We denote by $\mathcal{L}$ the Lebesgue measure on the interval $(0,1)$, and by $\mathbb{I}$ the set of irrational numbers in $(0,1)$, that is, $\mathbb{I}\coloneq(0,1)\setminus\mathbb{Q}$.
\begin{theorem}\label{limit theorems}
	For signed Engel expansions, the following limit theorems hold.
	\begin{enumerate}[(1)]
		\item\label{LLN for dn}
		 Law of large numbers (LLN): For Lebesgue almost all $x\in(0,1)$,
		\begin{equation}\nonumber
			\lim_{n\to\infty}\frac{\log d_{n}(x)}{n}=1.
		\end{equation}
		\item Central limit theorem (CLT): For any $t\in\mathbb{R}$,
		\begin{equation}\nonumber
			\lim_{n\to\infty}\mathcal{L}\left\{x\in\mathbb{I}\colon \frac{\log d_{n}(x)-n}{\sqrt{n}}\leq t\right\}=\frac{1}{\sqrt{2\pi}}\int_{-\infty}^{t}e^{-\frac{u^{2}}{2}}du.
		\end{equation}
		\item Law of the iterated logarithm (LIL): For Lebesgue almost all $x\in(0,1)$,
		\begin{equation}\nonumber
			\varlimsup_{n\to\infty}\frac{\log d_{n}(x)-n}{\sqrt{2n\log\log n}}=1,\quad\textnormal{ and }\quad\varliminf_{n\to\infty}\frac{\log d_{n}(x)-n}{\sqrt{2n\log\log n}}=-1.
		\end{equation}
	\end{enumerate}
\end{theorem}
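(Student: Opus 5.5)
The plan is to exploit the Markov (cylinder) structure of $T$ and reduce all three statements to the corresponding limit theorems for a martingale with asymptotically i.i.d. $\mathrm{Exp}(1)$ increments, in the spirit of Erd\H{o}s--R\'enyi--Sz\"usz and Williams.

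\textbf{Step 1: conditional distribution of $d_{n}$.} From the definition of $T$, on each branch the map is linear and onto an initial interval. For $x\in(\frac{1}{2k},\frac{1}{2k-1})$ it maps onto $(0,\frac{1}{2k-1})$, and for $x\in(\frac{1}{2k+1},\frac{1}{2k})$ onto $(0,\frac{1}{2k+1})$; in both cases $d_{1}=2k$. By induction, on every cylinder of order $n$ (fixing $d_{1},\dots,d_{n}$ and the signs) the map $T^{n}$ is affine onto $(0,a_{n})$ with $a_{n}=\frac{1}{d_{n}-s_{n}}\in\{\frac{1}{d_{n}-1},\frac{1}{d_{n}+1}\}$. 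Hence, conditionally on $\mathcal{F}_{n}=\sigma(d_{1},\dots,d_{n},s_{1},\dots,s_{n})$, the point $T^{n}x$ is uniform on $(0,a_{n})$. This gives the explicit transition law
\begin{equation}\nonumber
\mathcal{L}\left(d_{n+1}\geq 2j\mid\mathcal{F}_{n}\right)=\frac{1}{(2j-1)a_{n}}=\frac{d_{n}-s_{n}}{2j-1},\qquad 2j\geq d_{n}.
\end{equation}
This is the same as the Engel transition law up to the factors $\frac{d_{n}\pm1}{2j-1}$ versus $\frac{d_{n}}{2j}$. I would rely on \cref{pro:signed Engel Expansion} for the admissibility conditions on the digits.

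\textbf{Step 2: increments.} Set $\xi_{n}=\log d_{n}-\log d_{n-1}$ for $n\geq2$. From the formula in Step 1, compare the tail $\mathcal{L}(\xi_{n+1}\geq t\mid\mathcal{F}_{n})$ with $e^{-t}$. Up to lattice effects of relative size $O(1/d_{n})$, uniformly in $t$, $\xi_{n+1}$ given $\mathcal{F}_{n}$ is $\mathrm{Exp}(1)$. Consequently
\begin{equation}\nonumber
\mathbb{E}(\xi_{n+1}\mid\mathcal{F}_{n})=1+O(1/d_{n}),\qquad \mathrm{Var}(\xi_{n+1}\mid\mathcal{F}_{n})=1+O(1/d_{n}),\qquad \mathbb{E}(\xi_{n+1}^{4}\mid\mathcal{F}_{n})\leq C .
\end{equation}
Define the martingale $M_{n}=\sum_{k\leq n}(\xi_{k}-\mathbb{E}(\xi_{k}\mid\mathcal{F}_{k-1}))$. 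Then
\begin{equation}\nonumber
\log d_{n}-n=M_{n}+\sum_{k<n}O(1/d_{k})+O(1).
\end{equation}

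\textbf{Step 3: limit theorems.} The increments of $M_{n}$ have uniformly bounded conditional fourth moments. The martingale SLLN therefore gives $M_{n}/n\to0$ a.s. Since the conditional means are at least $1-O(1)$, this first yields $\log d_{n}\geq cn$ eventually a.s., so $\sum_{k}1/d_{k}<\infty$ a.s. Feeding this back shows that the drift error is $O(1)$ a.s. and that $n^{-1}\sum_{k\leq n}\mathrm{Var}(\xi_{k}\mid\mathcal{F}_{k-1})\to1$ a.s. The LLN follows at once. For the CLT I would apply the martingale CLT (Lindeberg via the fourth moments, with conditional variance normalized by $n$). The remaining error, divided by $\sqrt{n}$, tends to $0$ in probability. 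For the LIL I would apply Stout's martingale law of the iterated logarithm, whose hypotheses are $s_{n}^{2}=\sum\mathrm{Var}(\cdot\mid\mathcal{F}_{k-1})\sim n$ a.s. together with a bounded-moment condition on increments. The bounded error is negligible against $\sqrt{2n\log\log n}$.

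\textbf{Main obstacle.} I expect the main difficulty to be the uniform control in Step 2: the discrete sum over $j$, the dependence of $a_{n}$ on the sign $s_{n}$, and the small-$d_{n}$ regime must all be absorbed into $O(1/d_{n})$ bounds valid for every admissible state. A second difficulty is verifying the exact hypotheses of the martingale LIL, in particular almost-sure convergence of the normalized conditional variance, which needs the bootstrap from the LLN. If this bookkeeping becomes awkward, the alternative is a Williams-type coupling. There one constructs i.i.d. uniforms $U_{n}$ with $d_{n}$ equal to a deterministic function of $d_{n-1}$, $s_{n-1}$ and $U_{n}$, and compares $\log d_{n}$ with $\sum-\log U_{k}$ plus a summable error.
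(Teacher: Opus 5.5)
Your argument is correct in outline, but it takes a different route from the paper. The paper does no conditional-moment analysis. Instead:
\begin{itemize}
\item It uses that $\{d_n\}$ alone is a time-homogeneous Markov chain (\cref{dn homo markov chain}).
\item It builds a surrogate chain $D_1=[e^{X_1}]_E$, $D_{n+1}=[\frac{(D_n-1)(D_n+1)}{D_n}e^{X_{n+1}}]_E$, driven by i.i.d.\ $\mathrm{Exp}(1)$ variables, with the same initial law and transitions (\cref{Dn homo Markov chain}).
\item It shows that ties $D_{n+1}=D_n$ occur only finitely often (\cref{dn=dn+1 for finitely many n}). Hence $D_n\ge 2n-C(x)$ eventually and $\log D_n-\log D_{n-1}-X_n=O(1/n)$, so $\log D_n-S_n=O(\log n)$.
\item The classical i.i.d.\ LLN, CLT and LIL for $S_n=\sum_{k\le n}X_k$ then pass to $\log d_n$ by equality of laws.
\end{itemize}
This is essentially the Williams-type coupling you list as a fallback.

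Your martingale route works directly with the digits under $\mathcal{L}$ and the finer filtration including $s_n$. It needs no auxiliary chain, no transfer of a.s.\ statements through equality in law, and no tie lemma. The bootstrap (exponential growth of $d_n$ making $\sum 1/d_k$ finite) also gives the sharper $\log d_n-n=M_n+O(1)$ a.s. The price is heavier machinery: a martingale CLT, a martingale LIL, and uniform conditional-moment bookkeeping. The paper only needs the i.i.d.\ theorems.

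A few points to tighten:
\begin{itemize}
\item The tail formula $\mathcal{L}(d_{n+1}\ge 2j\mid\mathcal{F}_n)=\frac{d_n-s_n}{2j-1}$ holds only for $2j-1\ge d_n-s_n$. When $s_n=-1$ this means $2j\ge d_n+2$; below that threshold the probability is $1$. This is still within a factor $1+O(1/d_n)$ of $e^{-t}$, so nothing breaks.
\item ``Conditional means at least $1-O(1)$'' should be a uniform positive lower bound. Each state has positive conditional mean and it tends to $1$ as $d_n\to\infty$. Alternatively, use $d_n\to\infty$ from \cref{pro:signed Engel Expansion}.
\item Stout's martingale LIL in its standard form needs a.s.\ increment bounds $|\eta_k|\le K_k s_k/\sqrt{\log\log s_k^2}$ with $K_k\to0$, not merely bounded moments. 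Truncate $\xi_k$ at, say, $\sqrt{k}/\log k$. The conditional exponential tails and Borel--Cantelli show the truncation is eventually inactive, and it shifts conditional means and variances only by summable amounts.
\end{itemize}
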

For any $x\in\mathbb{I}$ and $n\in\mathbb{N}$, define the gap sequence $\left\{\Delta_{n}(x)\right\}_{n\geq1}$ as 
\begin{equation}\nonumber
	\Delta_{1}(x)\coloneq d_{1}(x),\quad\textnormal{and}\quad\Delta_{n}(x)\coloneq d_{n}(x)-d_{n-1}(x)\ \textnormal{for any}\ n\geq2.
\end{equation}
For the gap sequence $\left\{\Delta_{n}(x)\right\}_{n\geq1}$, we also establish a result analogous to \cref{limit theorems}, as follows. For the case of Pierce expansion, the corresponding results can be found in \cite{MR825872} and \cite{MR4954368}.
\begin{corollary}\label{limit theorems for gap sequences}
	For signed Engel expansions, the following limit theorems hold.
	\begin{enumerate}[(1)]
		\item\label{log Deltan/n to 1 a.e.} Law of large numbers (LLN): For Lebesgue almost all $x\in(0,1)$,
		\begin{equation}\nonumber
			\lim_{n\to\infty}\frac{\log \Delta_{n}(x)}{n}=1.
		\end{equation}
		\item Central limit theorem (CLT): For any $t\in\mathbb{R}$,
		\begin{equation}\nonumber
			\lim_{n\to\infty}\mathcal{L}\left\{x\in\mathbb{I}\colon \frac{\log \Delta_{n}(x)-n}{\sqrt{n}}\leq t\right\}=\frac{1}{\sqrt{2\pi}}\int_{-\infty}^{t}e^{-\frac{u^{2}}{2}}du.
		\end{equation}
		\item Law of the iterated logarithm (LIL): For Lebesgue almost all $x\in(0,1)$,
		\begin{equation}\nonumber
			\varlimsup_{n\to\infty}\frac{\log \Delta_{n}(x)-n}{\sqrt{2n\log\log n}}=1,\quad\textnormal{ and }\quad\varliminf_{n\to\infty}\frac{\log \Delta_{n}(x)-n}{\sqrt{2n\log\log n}}=-1.
		\end{equation}
	\end{enumerate}
\end{corollary}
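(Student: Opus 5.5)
Since the corollary is stated right after \cref{limit theorems}, I will derive it from that theorem. The plan is to show that $\log\Delta_{n}(x)-\log d_{n}(x)$ is almost surely $O(\log n)$ (or smaller). Such an error is negligible against $n$ in the LLN, against $\sqrt{n}$ in the CLT, and against $\sqrt{2n\log\log n}$ in the LIL. Since $\Delta_{n}\le d_{n}$, one direction is free: $\log\Delta_{n}\le\log d_{n}$. It therefore suffices to show that almost surely $\Delta_{n}(x)\ge d_{n}(x)/n^{2}$ for all large $n$. Equivalently, in terms of the ratio $R_{n}=d_{n}/d_{n-1}$, it suffices to show $1-1/R_{n}\ge n^{-2}$, i.e.\ that $R_{n}$ is not too close to $1$.

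\textbf{Distribution of the next digit.} I will compute the conditional distribution of $d_{n}$ given $d_{1},\dots,d_{n-1}$ and the signs, using the cylinder structure from \cref{pro:signed Engel Expansion}. From the definition of $T$, on each branch $T$ is linear with slope $\pm d_{1}$ onto $(0,1)$. Hence Lebesgue measure is preserved in the sense that $T^{n-1}x$ is uniformly distributed on $(0,1)$ given the first $n-1$ digits and signs. Given $d_{n-1}=j$, the admissible values of $d_{n}$ are restricted: for the Engel-type branch (same sign) $d_{n}\ge j$, and for a sign change $d_{n}\ge j+2$. These conditions translate into $T^{n-1}x$ lying in an interval of length about $1/j$, and the event $\{d_{n}=m\}$ has conditional probability about $j/m^{2}$. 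Consequently the event $\{\Delta_{n}<d_{n-1}/n^{2}\}$, that is $d_{n}\in[j,j(1+n^{-2}))$, has conditional probability at most
\[
C\sum_{j\le m<j(1+n^{-2})}\frac{j}{m^{2}}\le C\Bigl(\frac{1}{j}+n^{-2}\Bigr).
\]
The $1/j$ term comes from the single value $m=j$ (or $m=j+2$), since the digits are even integers.

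\textbf{Borel--Cantelli.} The $n^{-2}$ term is summable. For the $1/j$ term, I use that $d_{n-1}\ge e^{n/2}$ eventually almost surely, by the LLN part of \cref{limit theorems}. To make this rigorous, I work with the events $\{d_{n-1}\ge e^{n/2},\ \Delta_{n}<d_{n-1}/n^{2}\}$, whose probabilities are bounded by $C(e^{-n/2}+n^{-2})$ and are therefore summable. Borel--Cantelli then gives $\Delta_{n}\ge d_{n-1}/n^{2}\ge d_{n}/(2n^{2})$ eventually. The last inequality uses $d_{n}\le 2d_{n-1}$, which is not automatic. I replace it as follows: from $\Delta_{n}\ge d_{n-1}/n^{2}$ together with $\Delta_{n}=d_{n}-d_{n-1}$, one gets $\Delta_{n}\ge d_{n}/(n^{2}+1)$ directly. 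This gives $|\log\Delta_{n}-\log d_{n}|\le 3\log n$ eventually, almost surely.

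\textbf{Conclusion.} The LLN and LIL then follow directly from \cref{limit theorems}(1),(3), because $\log n/\sqrt{n\log\log n}\to0$. For the CLT, the difference $(\log\Delta_{n}-\log d_{n})/\sqrt{n}\to0$ almost surely, hence in probability, and Slutsky's lemma transfers the normal limit.

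\textbf{Main obstacle.} I expect the main obstacle to be the exact cylinder computation of the conditional law of $d_{n}$ given the past, which has to handle the parity and sign constraints. Only the bound of order $j/m^{2}$ is needed, though, and it should follow from the linearity of the branches of $T$.
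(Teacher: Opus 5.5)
Your argument is correct, and it takes a different route from the paper. The paper never works with $\Delta_n$ directly. It passes to the surrogate chain $D_n$ built from i.i.d.\ exponentials $X_n$, and uses \cref{dn=dn+1 for finitely many n} to get $D_n-D_{n-1}\ge 2$ eventually. It then sandwiches $\log(D_n-D_{n-1})-\log D_{n-1}$ between $\log\frac14+\log(e^{X_n}-1)$ and $X_n$. Finally it imports \cref{lim of Xn divided square root of n} from an external source, namely that $X_n/\sqrt n\to0$ and $\log(e^{X_n}-1)/\sqrt n\to0$ a.s. The resulting $o(\sqrt n)$ discrepancy transfers to the digits because $\{D_n\}$ and $\{d_n\}$ have the same law.

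You compare with $d_n$ rather than $d_{n-1}$, so the upper side $\Delta_n\le d_n$ is free. You handle the lower side by a direct Borel--Cantelli argument on the digits, using only the transition probabilities and the LLN of \cref{limit theorems}. This is self-contained, with no coupling and no cited lemma. It also gives the sharper a.s.\ bound $0\le\log d_n-\log\Delta_n\le\log(n^2+1)$ eventually. The paper's route is shorter only because the exponential coupling is already in place and the control of small values of $e^{X_n}-1$ is outsourced. You could even drop the appeal to the LLN: the computation in the proof of \cref{dn=dn+1 for finitely many n} applies verbatim to $d_n$. It shows that $\sum_k\mathcal{L}(d_{n-1}=2k)/(2k)$ decays geometrically, which makes the $1/j$ term summable directly.

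One correction to your heuristic: the branches of $T$ are not full. $T$ maps $(\frac{1}{2k},\frac{1}{2k-1})$ onto $(0,\frac{1}{2k-1})$ and $(\frac{1}{2k+1},\frac{1}{2k})$ onto $(0,\frac{1}{2k+1})$. So given $d_1,\dots,d_{n-1}$ and $\epsilon_2,\dots,\epsilon_n$, the point $T^{n-1}x$ is uniform on $(0,1/(d_{n-1}-s_{n-1}))$, not on $(0,1)$; this is why the paper rescales to $y_n$. The slip costs you nothing, because the estimate you need is given exactly by \cref{dn homo markov chain}. For even $j$, $\mathcal{L}(d_n=j\mid d_{n-1}=j)=1/j$, and for even $m\ge j+2$,
\[
\mathcal{L}(d_n=m\mid d_{n-1}=j)=\frac{2(j^2-1)}{j(m^2-1)}\le\frac{3j}{m^2}.
\]
Cite this rather than deriving the bound from the full-branch picture.
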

For any $x\in\mathbb{I}$ and $n\in\mathbb{N}$, define the ratio sequence $\left\{R_{n}(x)\right\}_{n\geq1}$ as 
\begin{equation}\nonumber
	R_{1}(x)\coloneq d_{1}(x),\quad\textnormal{and}\quad R_{n}(x)\coloneq\frac{d_{n}(x)}{d_{n-1}(x)}\ \textnormal{for any}\ n\geq2.
\end{equation}
 Let $\phi\colon\mathbb{N}\to\left(0,\infty\right)$ be an arbitrary function. Define
\begin{equation}\nonumber
	R(\phi)\coloneq\left\{x\in\mathbb{I}\colon R_{n}(x)\geq \phi(n),\textnormal{ i.m. }n\right\},
\end{equation}
where i.m. denotes infinitely many. In the context of Oppenheim expansion, the set corresponding to $R(\phi)$ was studied by Galambos in \cite{MR568142}. Motivated by the Oppenheim expansion case, we prove the Borel--Bernstein theorem on the zero-one law of the Lebesgue measure for the set $R(\phi)$.
\begin{theorem}\label{Borel-Bernstain thm of ratio form}
	Let $\phi\colon\mathbb{N}\to\left(0,\infty\right)$ be an arbitrary function. Then 
	\begin{equation}\nonumber
		\mathcal{L}\left(R(\phi)\right)=
		\begin{cases}
			1, &\textnormal{ if }\ \sum_{n=1}^{\infty}\frac{1}{\phi(n)}=\infty,\\
			0, &\textnormal{ if }\ \sum_{n=1}^{\infty}\frac{1}{\phi(n)}<\infty.
		\end{cases}
	\end{equation}
\end{theorem}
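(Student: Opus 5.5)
The plan is to compute, on each cylinder of order $n-1$, the conditional probability that $R_{n}\geq\phi(n)$. It should be comparable to $1/\phi(n)$ with absolute constants. The convergence half then follows from the first Borel--Cantelli lemma, and the divergence half from a product estimate.

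\emph{Step 1 (cylinder structure).} Each branch of $T$ is affine with slope $\pm d_{1}$. For $y\in\left(\frac{1}{2k},\frac{1}{2k-1}\right)$ we have $d_{1}(y)=2k$ and $Ty=2ky-1$, which ranges over $\left(0,\frac{1}{2k-1}\right)$. For $y\in\left(\frac{1}{2k+1},\frac{1}{2k}\right)$ we have $d_{1}(y)=2k$ and $Ty=1-2ky$, which ranges over $\left(0,\frac{1}{2k+1}\right)$.

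Hence, inductively, for an admissible block $(d_{1},\dots,d_{n-1})$ with signs $(s_{1},\dots,s_{n-1})$, the cylinder $I_{n-1}$ is an interval. The map $T^{n-1}$ sends $I_{n-1}$ affinely onto $\left(0,\frac{1}{d_{n-1}-s_{n-1}}\right)$, up to endpoints. Consequently, for $x$ uniform on $I_{n-1}$, the point $T^{n-1}x$ is uniform on that interval. I would verify this by induction using \cref{pro:signed Engel Expansion} and the explicit form of $T$.

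\emph{Step 2 (conditional tail).} Note $\{y:d_{1}(y)\geq m\}=\left(0,\frac{1}{m-1}\right)$ for even $m$, since $d_{1}(y)=2k$ exactly on $\left(\frac{1}{2k+1},\frac{1}{2k-1}\right)$. Let $m$ be the least even integer with $m\geq \max\{\phi(n),1\}\,d_{n-1}$. Then
\begin{equation}\nonumber
\frac{\mathcal{L}\left(I_{n-1}\cap\{R_{n}\geq\phi(n)\}\right)}{\mathcal{L}(I_{n-1})}=\frac{d_{n-1}-s_{n-1}}{m-1}\quad(\text{capped at }1).
\end{equation}
Here $d_{n-1}\pm1\in\left[\frac{d_{n-1}}{2},2d_{n-1}\right]$, and $m-1\asymp \phi(n)d_{n-1}$ whenever $\phi(n)\geq1$. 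This gives constants $0<c<C$, independent of $n$ and of the cylinder, with
\begin{equation}\nonumber
c\min\left\{1,\frac{1}{\phi(n)}\right\}\leq\mathcal{L}\left(R_{n}\geq\phi(n)\,\middle|\,I_{n-1}\right)\leq \frac{C}{\phi(n)}.
\end{equation}
When $\phi(n)\leq 1$ the event is automatic, since $R_{n}\geq1$. The case $n=1$ is handled directly from $R_{1}=d_{1}$.

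\emph{Step 3 (convergence case).} Summing the upper bound over all cylinders of order $n-1$ gives $\mathcal{L}\{R_{n}\geq\phi(n)\}\leq C/\phi(n)$. The first Borel--Cantelli lemma then yields $\mathcal{L}(R(\phi))=0$.

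\emph{Step 4 (divergence case).} Fix $N<M$ and write $A_{n}=\{R_{n}\geq\phi(n)\}$. The event $\bigcap_{n=N}^{M}A_{n}^{c}$ is a union of cylinders of order $M$. Conditioning successively, using Step 2 at each level (valid uniformly in the past), gives
\begin{equation}\nonumber
\mathcal{L}\left(\bigcap_{n=N}^{M}A_{n}^{c}\right)\leq\prod_{n=N}^{M}\left(1-c\min\left\{1,\frac{1}{\phi(n)}\right\}\right).
\end{equation}
If $\sum1/\phi(n)=\infty$, then $\sum\min\{1,1/\phi(n)\}=\infty$, so the product tends to $0$ as $M\to\infty$. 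Therefore $\mathcal{L}\left(\bigcup_{N}\bigcap_{n\geq N}A_{n}^{c}\right)=0$, i.e. $\mathcal{L}(R(\phi))=1$.

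The main obstacle is Step 1: it requires checking carefully that every cylinder is a full interval mapped affinely onto $\left(0,\frac{1}{d_{n-1}\mp1}\right)$, with no partial branches. Everything else is uniform bookkeeping of constants.
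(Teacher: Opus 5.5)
Your proof is correct, and it rests on the same fact as the paper's: conditionally on a cylinder fixing $d_1,\dots,d_{n-1}$ and $s_1,\dots,s_{n-1}$, the point $T^{n-1}x$ is uniform on $\bigl(0,\frac{1}{d_{n-1}-s_{n-1}}\bigr)$. The paper encodes this as the uniformity of $y_n=(d_{n-1}-s_{n-1})T^{n-1}x$.

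Your cylinder is finer than the paper's $I_{n-1}$, which does not fix $s_{n-1}=\epsilon_n/\epsilon_{n-1}$. On that coarser set $T^{n-1}$ is folded (two affine pieces), not affine, so you were right to include the extra sign. Your Step 1 does go through by induction: the admissibility rule $\sigma_{i+1}\ge\sigma_i+2$ when $s_i=-1$ is exactly what keeps each branch $\{d_1=\sigma,\ s_1=\tau\}$ inside the range $\bigl(0,\frac{1}{\sigma'-s'}\bigr)$ of the previous level, where $\sigma',s'$ denote the previous digit and sign.

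After this common starting point the two routes differ.
\begin{itemize}
\item The paper builds an exactly independent comparison sequence. It quantizes $1/y_n$ to odd integers $Y_n$ and uses the parity trick of \cref{Equivalent condition transformation} (odd thresholds $r$ against even digits) to get the exact product formula of \cref{the propability of y1...yn}. Hence $\{Y_n\}$ is i.i.d. with $\mathcal{L}(Y_n\ge 2k-1)=\frac{1}{2k-1}$. It then sandwiches $R_n$ between constant multiples of $Y_n$ and applies both halves of the classical Borel--Cantelli lemma.
\item You skip that construction. You use two-sided conditional bounds, uniform over cylinders, together with the product estimate $\prod_n\bigl(1-c\min\{1,1/\phi(n)\}\bigr)\to0$.
\end{itemize}

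Your version is shorter for this theorem. It is also more robust, since it needs only comparability up to absolute constants, not exact independence. It would therefore survive a setting with merely bounded distortion.

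The paper's construction pays off later. The same i.i.d. $Y_n$ are reused for \cref{liminflogRn-logn/loglogn}, both for the events $\{Y_n=1\}$ occurring infinitely often and for the maxima $U_n$. With your approach those arguments would each need their own conditional product estimate.
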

According to whether the series $\sum_{n=1}^{\infty}\frac{1}{\phi(n)}$ converges, we compute the limsup of $\frac{R_{n}(x)}{\phi(n)}$ in the sense of Lebesgue almost everywhere.
\begin{corollary}\label{lebesguemeasureoflimsupRn/phin}
	Let $\phi\colon\mathbb{N}\to\left(0,\infty\right)$ be a function. The following results hold.
	\begin{enumerate}[(1)]
		\item If $\sum_{n=1}^{\infty}\frac{1}{\phi(n)}<\infty$, then, for Lebesgue almost all $x\in(0,1)$, we have $\varlimsup_{n\to\infty}\frac{R_{n}(x)}{\phi(n)}=0$.
		\item If $\sum_{n=1}^{\infty}\frac{1}{\phi(n)}=\infty$, then, for Lebesgue almost all $x\in(0,1)$, we have  $\varlimsup_{n\to\infty}\frac{R_{n}(x)}{\phi(n)}=\infty$.
	\end{enumerate}
\end{corollary}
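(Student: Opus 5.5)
The plan is to derive \cref{lebesguemeasureoflimsupRn/phin} directly from \cref{Borel-Bernstain thm of ratio form} by applying it to rescaled versions of $\phi$. Multiplying $\phi$ by a positive constant does not change whether $\sum 1/\phi(n)$ converges or diverges. Hence \cref{Borel-Bernstain thm of ratio form} applies to $c\phi$ for every $c>0$, with the same dichotomy as for $\phi$.

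\textbf{Part (1).} Assume $\sum_{n}1/\phi(n)<\infty$ and fix $c\in\{1/m\colon m\in\mathbb{N}\}$. Then $\sum_n 1/(c\phi(n))=\frac1c\sum_n 1/\phi(n)<\infty$, so \cref{Borel-Bernstain thm of ratio form} gives $\mathcal{L}(R(c\phi))=0$. For $x\in\mathbb{I}\setminus R(c\phi)$ we have $R_n(x)<c\phi(n)$ for all sufficiently large $n$, so $\varlimsup_{n\to\infty} R_n(x)/\phi(n)\le c$. Intersect over the countable family $c=1/m$ and remove the null set $\mathbb{Q}\cap(0,1)$. For almost every $x$ this yields $\varlimsup_{n\to\infty} R_n(x)/\phi(n)\le 0$. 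Since $R_n(x)>0$ and $\phi(n)>0$, the limsup is at least $0$, so it equals $0$.

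\textbf{Part (2).} Assume $\sum_n 1/\phi(n)=\infty$ and take $c=m\in\mathbb{N}$. Then $\sum_n 1/(m\phi(n))=\infty$, so \cref{Borel-Bernstain thm of ratio form} gives $\mathcal{L}(R(m\phi))=1$. For $x\in R(m\phi)$ we have $R_n(x)/\phi(n)\ge m$ for infinitely many $n$, hence $\varlimsup_{n\to\infty} R_n(x)/\phi(n)\ge m$. The set $\bigcap_{m\ge1}R(m\phi)$ is a countable intersection of full-measure sets, so it has full measure. Every $x$ in it satisfies $\varlimsup_{n\to\infty} R_n(x)/\phi(n)=\infty$.

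There is no genuine obstacle here, since all the probabilistic content sits in \cref{Borel-Bernstain thm of ratio form}. The only points needing care are bookkeeping: \cref{Borel-Bernstain thm of ratio form} must hold for an arbitrary positive $\phi$ (it does, by its statement), and one must pass to countably many constants so that the exceptional null sets can be combined. One should also note that $R(\phi)$ is defined as a subset of $\mathbb{I}$, which has full measure in $(0,1)$, so the conclusions indeed hold for Lebesgue almost all $x\in(0,1)$.
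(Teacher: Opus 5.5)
Your proposal is correct and follows the paper's proof essentially verbatim. The paper likewise applies \cref{Borel-Bernstain thm of ratio form} to $\phi/K$ and to $K\phi$ and then lets $K$ vary. Your restriction to the countable families $c=1/m$ and $c=m$ simply makes precise the paper's appeal to ``the arbitrariness of $K$''.
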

For any $x\in\mathbb{I}$ and $n\in\mathbb{N}$, define
\begin{equation}\nonumber
	M_{n}(x)\coloneqq\max\left\{R_{k}(x)\colon1\leq k\leq n\right\}.
\end{equation}
Similar to $R(\phi)$, we define
\begin{equation}\nonumber
	M(\phi)\coloneq\left\{x\in\mathbb{I}\colon M_{n}(x)\geq\phi(n),\textnormal{ i.m. }n\right\}.
\end{equation}
When $\phi(n)$ is non-decreasing in $n$, a result similar to \cref{Borel-Bernstain thm of ratio form} holds for $M(\phi)$.
\begin{corollary}\label{Borel--Cantelli lemma for Mphi}
	Let $\phi\colon\mathbb{N}\to\left(0,\infty\right)$ be a non-decreasing function.
	Then
	\begin{equation}\nonumber
		\mathcal{L}\left(M(\phi)\right)=
		\begin{cases}
			1, &\textnormal{ if }\ \sum_{n=1}^{\infty}\frac{1}{\phi(n)}=\infty,\\
			0, &\textnormal{ if }\ \sum_{n=1}^{\infty}\frac{1}{\phi(n)}<\infty.
		\end{cases}
	\end{equation}
\end{corollary}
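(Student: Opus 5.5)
We deduce Corollary~\ref{Borel--Cantelli lemma for Mphi} from Theorem~\ref{Borel-Bernstain thm of ratio form}, using the monotonicity of $\phi$ in both directions.

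\emph{Divergence case.} Since $M_{n}(x)\geq R_{n}(x)$ for every $n$, we have $R(\phi)\subseteq M(\phi)$. If $\sum_{n}1/\phi(n)=\infty$, Theorem~\ref{Borel-Bernstain thm of ratio form} gives $\mathcal{L}(R(\phi))=1$, hence $\mathcal{L}(M(\phi))=1$. Monotonicity is not needed here.

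\emph{Convergence case.} Assume $\sum_{n}1/\phi(n)<\infty$. Because $\phi$ is non-decreasing and $1/\phi$ is summable, $\phi(n)\to\infty$. By Theorem~\ref{Borel-Bernstain thm of ratio form}, $\mathcal{L}(R(\phi))=0$, so for almost every $x\in\mathbb{I}$ there is $N(x)$ with $R_{k}(x)<\phi(k)$ for all $k> N(x)$.

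Fix such an $x$ and suppose, for contradiction, that $M_{n}(x)\geq\phi(n)$ for infinitely many $n$. For each such $n$ choose $k_{n}\leq n$ with $R_{k_{n}}(x)=M_{n}(x)\geq\phi(n)$.
\begin{itemize}
\item If $k_{n}>N(x)$, then $\phi(n)\leq R_{k_{n}}(x)<\phi(k_{n})\leq\phi(n)$, using $k_{n}\leq n$ and monotonicity of $\phi$. This is a contradiction.
\item Hence $k_{n}\leq N(x)$, and so $\phi(n)\leq M_{N(x)}(x)$, which is a finite number independent of $n$.
\end{itemize}
Since $\phi(n)\to\infty$, the second alternative can hold for only finitely many $n$. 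This contradiction shows $M(\phi)\subseteq R(\phi)$ up to a null set, so $\mathcal{L}(M(\phi))=0$.

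The only delicate point is handling the indices $k_{n}\leq N(x)$ in the convergence case. It is resolved by the observation that $\phi(n)\to\infty$, which follows from monotonicity together with summability of $1/\phi$.
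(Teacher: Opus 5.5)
Your proposal is correct and follows the paper's route: $R(\phi)\subseteq M(\phi)$ handles the divergence case, and $M(\phi)\subseteq R(\phi)$, which the paper merely asserts from monotonicity and summability, handles the convergence case. Your contradiction argument supplies exactly the omitted verification, and it shows the containment holds for every $x\notin R(\phi)$, so the qualifier ``up to a null set'' is unnecessary.
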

Regarding the upper limit of $\frac{M_{n}(x)}{\phi(n)}$, there is also a result similar to \cref{lebesguemeasureoflimsupRn/phin}.
\begin{corollary}\label{lebesguemeasureoflimsupMn/phin}
	Let $\phi\colon\mathbb{N}\to\left(0,\infty\right)$ be a non-decreasing function. The following results hold.
	\begin{enumerate}[(1)]
		\item If $\sum_{n=1}^{\infty}\frac{1}{\phi(n)}<\infty$, then, for Lebesgue almost all $x\in(0,1)$, we have $\varlimsup_{n\to\infty}\frac{M_{n}(x)}{\phi(n)}=0$.
		\item If $\sum_{n=1}^{\infty}\frac{1}{\phi(n)}=\infty$, then, for Lebesgue almost all $x\in(0,1)$, we have  $\varlimsup_{n\to\infty}\frac{M_{n}(x)}{\phi(n)}=\infty$.
	\end{enumerate}
\end{corollary}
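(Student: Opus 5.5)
The plan is to deduce \cref{lebesguemeasureoflimsupMn/phin} from \cref{Borel--Cantelli lemma for Mphi} by rescaling $\phi$, the same way \cref{lebesguemeasureoflimsupRn/phin} follows from \cref{Borel-Bernstain thm of ratio form}. The one observation needed is that for any constant $c>0$ the function $c\phi$ is again positive and non-decreasing. Moreover, $\sum 1/(c\phi(n))$ converges if and only if $\sum 1/\phi(n)$ converges. So \cref{Borel--Cantelli lemma for Mphi} applies to every rescaling $c\phi$, with the same dichotomy as for $\phi$.

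For part (1), assume $\sum 1/\phi(n)<\infty$ and fix a rational $\varepsilon>0$. Apply \cref{Borel--Cantelli lemma for Mphi} to $\varepsilon\phi$. This gives $\mathcal{L}(M(\varepsilon\phi))=0$, so for almost every $x\in\mathbb{I}$ we have $M_n(x)<\varepsilon\phi(n)$ for all sufficiently large $n$. Hence $\varlimsup_{n\to\infty} M_n(x)/\phi(n)\le\varepsilon$ off a null set $N_\varepsilon$. Take the countable union of the $N_\varepsilon$ over $\varepsilon\in\mathbb{Q}_{>0}$, and add the null set $\mathbb{Q}\cap(0,1)$. Outside this null set the upper limit is $\le\varepsilon$ for every rational $\varepsilon>0$, and it is $\ge 0$ since $M_n(x)\ge R_1(x)>0$. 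So the upper limit equals $0$ almost everywhere.

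For part (2), assume $\sum 1/\phi(n)=\infty$ and fix $K\in\mathbb{N}$. Apply \cref{Borel--Cantelli lemma for Mphi} to $K\phi$. Then $\mathcal{L}(M(K\phi))=1$, so for almost every $x$ we have $M_n(x)\ge K\phi(n)$ for infinitely many $n$. This gives $\varlimsup_{n\to\infty} M_n(x)/\phi(n)\ge K$. Intersecting the full-measure sets $M(K\phi)$ over $K\in\mathbb{N}$ gives a set of full measure on which the upper limit is $\infty$.

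There is no real obstacle, since everything reduces to \cref{Borel--Cantelli lemma for Mphi}. The only points to check are two. First, the scaled function must stay non-decreasing, which requires $c>0$. Second, the exceptional sets must be countable in number, which is why $\varepsilon$ is restricted to rationals and $K$ to integers.
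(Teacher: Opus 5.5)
Your proposal is correct and matches the paper's argument. The paper proves this corollary by applying \cref{Borel--Cantelli lemma for Mphi} to the rescaled functions $\phi/K$ and $K\phi$ (which stay non-decreasing and keep the same convergence behaviour of $\sum 1/\phi(n)$), exactly as in the proof of \cref{lebesguemeasureoflimsupRn/phin}; your restriction to rational $\varepsilon$ and integer $K$ just makes explicit the countability that the paper's ``arbitrariness of $K$'' leaves implicit.
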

The following result is an application of \cref{Borel-Bernstain thm of ratio form} and \cref{Borel--Cantelli lemma for Mphi} when $\phi(n)$ is taken as a special class of functions $n(\log n)^{\alpha}$ with $\alpha>1$ or $\alpha<1$.
\begin{theorem}\label{limsuplogRn-logn/loglogn}
	For Lebesgue almost all $x\in(0,1)$, we have
	\begin{equation}\nonumber
		\varlimsup_{n\to\infty}\frac{\log R_{n}(x)-\log n}{\log\log n}=1,\quad\textnormal{and}\quad\varlimsup_{n\to\infty}\frac{\log M_{n}(x)-\log n}{\log\log n}=1.
	\end{equation}
\end{theorem}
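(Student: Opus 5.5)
We derive the result from \cref{Borel-Bernstain thm of ratio form} and \cref{Borel--Cantelli lemma for Mphi} by taking $\phi_{\alpha}(n)\coloneqq n(\log n)^{\alpha}$ for $n\geq 3$ (and $\phi_\alpha(1)=\phi_\alpha(2)\coloneqq 1$, which does not affect the i.m.\ events). By the Cauchy condensation test, $\sum_{n}\frac{1}{n(\log n)^{\alpha}}$ converges if and only if $\alpha>1$. For $\alpha\ge 0$, $\phi_\alpha$ is non-decreasing from $n=3$ on; we may modify finitely many values so that it is non-decreasing everywhere without changing either the series behaviour or the sets $M(\phi_\alpha)$ up to the i.m.\ condition.

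\textbf{Upper bound.} Fix $\alpha>1$. By \cref{Borel-Bernstain thm of ratio form}, $\mathcal{L}(R(\phi_\alpha))=0$. Hence for a.e.\ $x$ we have $R_n(x)<n(\log n)^{\alpha}$ for all large $n$, i.e.
$$\frac{\log R_n(x)-\log n}{\log\log n}<\alpha.$$
Taking $\alpha=1+1/k$ and intersecting over $k\in\mathbb{N}$ (countably many null sets) gives $\varlimsup\le 1$ a.e. The same argument with \cref{Borel--Cantelli lemma for Mphi} gives $\varlimsup\le 1$ for $M_n$.

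\textbf{Lower bound.} Fix $0\le\alpha<1$ (say $\alpha=1-1/k$). Then $\sum 1/\phi_\alpha(n)=\infty$, so by \cref{Borel-Bernstain thm of ratio form}, a.e.\ $x$ satisfies $R_n(x)\ge n(\log n)^{\alpha}$ for infinitely many $n$. Thus $\varlimsup\ge\alpha$, and letting $k\to\infty$ along a countable family gives $\varlimsup\ge 1$ a.e. For $M_n$, the same holds either directly via \cref{Borel--Cantelli lemma for Mphi}, or by simply noting $M_n\ge R_n$.

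Combining the two bounds gives equality $1$ for both quantities. There is no real obstacle. The only points to watch are the monotonicity hypothesis of \cref{Borel--Cantelli lemma for Mphi}, which is why we restrict to $\alpha\ge0$ and adjust finitely many initial values, and the fact that $\log\log n$ is defined and positive only for $n\ge3$.
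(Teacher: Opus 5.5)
Your proposal is correct and takes essentially the same approach as the paper. Both arguments apply \cref{Borel-Bernstain thm of ratio form} with $\phi(n)=n(\log n)^{\alpha}$ for $\alpha>1$ (convergent series) and $\alpha<1$ (divergent series), pass to $\alpha\to1$ along a countable family, and use \cref{Borel--Cantelli lemma for Mphi} for $M_n$. Your handling of $n\le 2$ and of monotonicity, and the shortcut $M_n\ge R_n$ for the lower bound, are harmless refinements of that argument.
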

Noting that \cref{limsuplogRn-logn/loglogn} characterizes the upper limits, the following result provides the lower limits.
\begin{theorem}\label{liminflogRn-logn/loglogn}
	For Lebesgue almost all $x\in(0,1)$, we have
	\begin{equation}\nonumber
		\varliminf_{n\to\infty}\frac{\log R_{n}(x)-\log n}{\log\log n}=-\infty,\quad\textnormal{and}\quad\varliminf_{n\to\infty}\frac{\log M_{n}(x)-\log n}{\log\log n}=0.
	\end{equation}
\end{theorem}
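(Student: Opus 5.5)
The statement has four parts. The lower-limit assertion for $R_n$ should follow cheaply from \cref{limit theorems}(1). The assertion for $M_n$ requires a genuine two-sided estimate: an eventual lower bound for $M_n$ and an infinitely-often upper bound. Both rest on a distributional input I do not yet have: that the ratios $R_n$ have tail probabilities $\mathcal{L}(R_n\geq t)\asymp 1/t$ uniformly in $n$, and are quasi-independent across disjoint blocks of indices. I expect the proof of \cref{Borel-Bernstain thm of ratio form} to yield this, via the cylinder-length estimates of \cref{preliminaries}.

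\emph{The ratio sequence.} I claim that for a.e.\ $x$ we have $R_n(x)\leq\sqrt{n}$ for infinitely many $n$. Suppose instead that $R_n(x)>\sqrt n$ for all $n\geq N$. Then
$$\log d_n(x)\geq\sum_{k=N}^{n}\tfrac12\log k\sim\tfrac12 n\log n,$$
so $\log d_n(x)/n\to\infty$. This contradicts \cref{limit theorems}(1) on a full-measure set. Along the infinitely many $n$ with $R_n(x)\leq\sqrt n$,
$$\frac{\log R_n(x)-\log n}{\log\log n}\leq-\frac{\log n}{2\log\log n}\to-\infty,$$
which gives the first assertion.

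\emph{Lower bound for $M_n$.} To show $\varliminf\geq 0$, I fix $\varepsilon>0$, set $n_k=2^k$ and $t_k=2^k k^{-\varepsilon}$. I then bound $\mathcal{L}(M_{n_k}<t_k)$. Each index $j\leq n_k$ contributes a conditional probability of at least $c/t_k$ that $R_j\geq t_k$, given the first $j-1$ digits. Multiplying these conditional bounds gives
$$\mathcal{L}(M_{n_k}<t_k)\leq(1-c/t_k)^{n_k}\leq\exp(-c\,k^{\varepsilon}),$$
which is summable in $k$. By Borel--Cantelli, $M_{n_k}\geq t_k$ for all large $k$. Monotonicity of $M_n$ then interpolates: for $n_k\leq n<n_{k+1}$ we get $M_n\geq M_{n_k}\geq n/(2(\log_2 n)^{\varepsilon})$. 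Hence $\varliminf\geq-\varepsilon$, and letting $\varepsilon\downarrow0$ gives the lower bound.

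\emph{Upper bound for $M_n$.} To show $\varliminf\leq 0$, I take a very sparse sequence $n_k=e^{k^2}$ and the threshold $s_k=n_k(\log n_k)^{\varepsilon}$. Consider the block events
$$A_k=\{R_j<s_k\ \text{for all } n_{k-1}<j\leq n_k\}.$$
\Cref{limsuplogRn-logn/loglogn} gives, a.e., $M_{n_{k-1}}\leq n_{k-1}(\log n_{k-1})^{2}$ for large $k$, and this is $<s_k$ by sparseness. So on $A_k$ we get $M_{n_k}<s_k$ for large $k$. The tail bound $\mathcal{L}(R_j\geq s_k)\leq C/s_k$ and a union bound give
$$\mathcal{L}(A_k^c)\leq Cn_k/s_k=C(\log n_k)^{-\varepsilon}\to0.$$
Since $\mathcal{L}(A_k)\to1$, the reverse Fatou lemma gives $\mathcal{L}(\varlimsup_k A_k)\geq\varlimsup_k\mathcal{L}(A_k)=1$. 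Hence a.e.\ $x$ lies in infinitely many $A_k$, so $M_{n_k}<s_k$ infinitely often. Along those $n_k$ the quotient is at most $\varepsilon$, and letting $\varepsilon\downarrow0$ gives $\varliminf\leq0$.

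\emph{Main obstacle.} The hard step is establishing the conditional tail bounds for $R_j$ given the first $j-1$ digits. The lower bound is used in the product estimate; the upper bound is used in the union bound, where it is only needed unconditionally. The conditional law of $d_{j}/d_{j-1}$ given earlier digits depends on the parity and sign constraints ($d_{j}\geq d_{j-1}+2$ when the sign flips). I would first try to show from the cylinder lengths that, conditionally, $\mathcal{L}(R_j\geq t\mid d_1,\dots,d_{j-1})$ lies between $c/t$ and $C/t$ for $t\geq2$, uniformly in the past. This is the same Markov-type estimate that drives \cref{Borel-Bernstain thm of ratio form}.
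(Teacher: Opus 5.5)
Your proof is correct, but it takes a different route from the paper. The paper never estimates $R_n$ directly. It uses the exactly i.i.d.\ odd-valued variables $Y_n$ of \cref{definition property Yn}, together with $\frac12 Y_n\leq R_n<6Y_n$:
\begin{itemize}
\item For the first assertion it uses $\mathcal{L}(Y_n=1)=\frac23$ and the second Borel--Cantelli lemma, which gives $R_n\leq 5$ infinitely often.
\item For the $M_n$ assertion it passes to $U_n=\max_{i\leq n}Y_i$. Both bounds are then routine i.i.d.\ computations with the first Borel--Cantelli lemma along $\lceil e^{k^{\alpha}}\rceil$ and $\lfloor e^{k}\rfloor$.
\end{itemize}
You instead get the first assertion from the LLN alone, by telescoping $\log d_n=\sum_k\log R_k$. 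For $M_n$ you use only uniform conditional tail bounds for $R_j$: a product of conditional probabilities for the lower bound, and a union bound plus reverse Fatou for the upper bound. The paper's route puts all the work into constructing $Y_n$ and then has exact independence. It also gets the upper bound eventually along its subsequence, not just infinitely often. Your route needs only the Markov transition law, and would work for any digit process whose conditional ratio tails are uniformly of order $1/t$.

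The step you call the main obstacle is immediate from \cref{dn homo markov chain}. Its proof computes the transition law conditioned on an entire basic interval of order $j-1$. Write $\mathcal{F}_{j-1}$ for the $\sigma$-algebra generated by these intervals. On $\{d_{j-1}=2k\}$ and for $L\geq k+1$, telescoping gives
\[
\mathcal{L}\left(d_j\geq 2L\mid\mathcal{F}_{j-1}\right)=\frac{(2k-1)(2k+1)}{2k(2L-1)} .
\]
Take $2L$ to be the least even integer $\geq 2kt$. This gives $\frac{1}{2t}\leq\mathcal{L}(R_j\geq t\mid\mathcal{F}_{j-1})\leq\frac{2}{t}$ for all $t\geq 2$, and the case $j=1$ is direct. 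Alternatively, $\{M_n<t\}\subset\{U_n<2t\}$ and $\{R_n\geq s\}\subset\{Y_n>s/6\}$ give both inputs from the paper's $Y_n$.

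Finally, in your upper bound the blocks $A_k$ and the appeal to \cref{limsuplogRn-logn/loglogn} are unnecessary. The union bound over all $j\leq n_k$ already gives $\mathcal{L}(M_{n_k}\geq s_k)\leq 2(\log n_k)^{-\varepsilon}\to0$, and reverse Fatou finishes. The quasi-independence across blocks mentioned in your outline is never actually used.
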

The following corollary characterizes the growth rate of $M_{n}$.
\begin{corollary}\label{limlogMn/logn}
	For Lebesgue almost all $x\in(0,1)$, we have
	\begin{equation}\nonumber
		\lim_{n\to\infty}\frac{\log M_{n}(x)}{\log n}=1.
	\end{equation}
\end{corollary}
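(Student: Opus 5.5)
Corollary \ref{limlogMn/logn} follows directly from Theorems \ref{limsuplogRn-logn/loglogn} and \ref{liminflogRn-logn/loglogn} applied to $M_{n}$; no new measure-theoretic input is needed. Let $E$ be the full-measure set of $x\in\mathbb{I}$ on which both statements about $M_{n}$ hold:
\begin{equation}\nonumber
\varlimsup_{n\to\infty}\frac{\log M_{n}(x)-\log n}{\log\log n}=1,\qquad \varliminf_{n\to\infty}\frac{\log M_{n}(x)-\log n}{\log\log n}=0.
\end{equation}
$E$ is an intersection of two sets of Lebesgue measure one, so $\mathcal{L}(E)=1$. We show that every $x\in E$ satisfies $\log M_{n}(x)/\log n\to1$.

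Fix $x\in E$. For the upper bound, the first relation gives $N_{1}$ with
\begin{equation}\nonumber
\log M_{n}(x)\leq\log n+2\log\log n \quad\text{for all } n\geq N_{1}.
\end{equation}
For the lower bound, the second relation (a liminf of $0$) gives $N_{2}$ with
\begin{equation}\nonumber
\log M_{n}(x)\geq\log n-\log\log n \quad\text{for all } n\geq N_{2}.
\end{equation}
Divide both by $\log n$ and let $n\to\infty$. Since $\log\log n/\log n\to0$, we obtain $\varlimsup_{n}\log M_{n}(x)/\log n\leq1$ and $\varliminf_{n}\log M_{n}(x)/\log n\geq1$, which proves the claim.

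The only subtle point is that the argument uses the \emph{lower} bound for $M_{n}$, not just the upper one. The limsup statement alone only controls $M_{n}$ from above along the whole sequence. The eventual lower bound comes from the liminf statement in Theorem \ref{liminflogRn-logn/loglogn}, which is finite (equal to $0$) for $M_{n}$, unlike the corresponding liminf for $R_{n}$, which is $-\infty$. This is also why the corollary holds for $M_{n}$ but has no analogue for $R_{n}$: the first relation of Theorem \ref{liminflogRn-logn/loglogn} shows that $\log R_{n}/\log n$ does not converge to $1$ almost everywhere.
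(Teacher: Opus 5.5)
Your argument is correct and is essentially the paper's proof, which likewise combines the upper-limit statement for $M_{n}$ in \cref{limsuplogRn-logn/loglogn} with the lower-limit statement for $M_{n}$ in \cref{liminflogRn-logn/loglogn}. One caveat concerns only your closing aside: a lower limit of $-\infty$ for $\frac{\log R_{n}-\log n}{\log\log n}$ does not by itself rule out $\frac{\log R_{n}}{\log n}\to1$, since deviations of size $o(\log n)$ are still allowed; what rules it out is the fact, established in the proof of \cref{liminflogRn-logn/loglogn}, that almost surely $R_{n}\leq5$ for infinitely many $n$.
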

The rest of this paper is organized as follows. In \cref{preliminaries}, we first present the proof of \cref{eq:finite signed Engel expansion,eq:infinite signed Engel expansion}, and then introduce the concept of the symbolic space for signed Engel expansions, along with the properties of the basic intervals. In \cref{proof of the results}, we complete the proofs of the aforementioned theorems and corollaries.
\section{Preliminaries}\label{preliminaries}
    This section begins by presenting a rigorous proof of \cref{eq:finite signed Engel expansion,eq:infinite signed Engel expansion}. Subsequently, based on the characteristics of digit sequences in the signed Engel expansion, we establish some fundamental properties associated with symbolic space and basic intervals. Finally, we conclude by deriving the lengths of these basic intervals and a conditional probability formula.
\begin{proposition}\label{pro:signed Engel Expansion}
	 Equations \eqref{eq:finite signed Engel expansion} and \eqref{eq:infinite signed Engel expansion} hold. In addition,  for any $x_{0}\in(0,1)$, if $d_{k}(x_{0})$ is odd for some $k\in\mathbb{N}$, then $x_{0}$ is rational and the finite sum on the right-hand side of \cref{eq:finite signed Engel expansion} has exactly $k$ terms, that is
	\begin{equation}\label{eq:finity odd signed Engel expansion}
		x_{0}=\frac{1}{d_{1}(x_{0})}+\frac{\epsilon_{2}(x_{0})}{d_{1}(x_{0})d_{2}(x_{0})}+\cdots+\frac{\epsilon_{k}(x_{0})}{d_{1}(x_{0})d_{2}(x_{0})\cdots d_{k}(x_{0})}.
	\end{equation}
\end{proposition}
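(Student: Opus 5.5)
We begin with the one-step dynamics of $T$ and derive everything from the identity $Tx=s_{1}(x)\left(d_{1}(x)x-1\right)$ together with the iterated formula \eqref{eq:signed Engel expansion}.

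\textbf{Step 1: the one-step behaviour of $T$.} Fix $x\in(0,1)$. If $x\in\left(\frac{1}{2k},\frac{1}{2k-1}\right)$, then $d_{1}(x)=2k$, $s_{1}(x)=1$ and $Tx=2kx-1\in(0,\frac{1}{2k-1})$. If $x\in\left(\frac{1}{2k+1},\frac{1}{2k}\right)$, then $d_{1}(x)=2k$, $s_{1}(x)=-1$ and $Tx=1-2kx\in(0,\frac{1}{2k+1})$. At the endpoints, $x=\frac{1}{n}$ gives $Tx=0$, and $d_{1}(x)=n$ with $n$ odd exactly when $x=\frac{1}{2k-1}$ (with $k\geq 2$, since $x<1$).

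So $d_{1}(x)$ is odd only if $x=\frac{1}{d_{1}(x)}$ and $Tx=0$. Whenever $Tx\neq0$, the digit $d_{1}(x)$ is even and $d_{1}(Tx)\geq d_{1}(x)$. Moreover, if $s_{1}(x)=-1$ then $Tx<\frac{1}{2k+1}$, which forces $d_{1}(Tx)\geq 2k+2=d_{1}(x)+2$. Since $\epsilon_{n+2}=\epsilon_{n+1}s_{n+1}$, applying this to $T^{n}x$ gives the following:
\begin{itemize}
\item the digits are non-decreasing;
\item the digits are even as long as the orbit has not hit $0$;
\item $d_{n+1}\geq d_{n}+2$ whenever $\epsilon_{n+1}=-\epsilon_{n}$ (note $s_{n}=\epsilon_{n+1}/\epsilon_{n}$).
\end{itemize}

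\textbf{Step 2: rational versus irrational.} The expansion \eqref{eq:signed Engel expansion} with remainder holds by induction, as sketched in the text. For $x\in\mathbb{Q}$, write $Tx=\pm(d_{1}x-1)$ with $x=p/q$. Then $Tx=p'/q$ with $p'=|d_{1}p-q|$. Because $0\le Tx<x$ (check: $Tx<\frac{1}{2k-1}$ while $x>\frac{1}{2k}$ is not enough, so instead use $d_{1}x-1<x$ in the first case and $1-d_{1}x<x$ in the second), we get $p'<p$. This strict decrease of the numerator is the key point, and I would verify it carefully, since $p'<p$ is equivalent to $|d_{1}x-1|<x$. In case one this reads $x(2k-1)<1$, which holds; in case two it reads $x(2k+1)>1$, which also holds. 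Hence numerators strictly decrease along the orbit, $T^{n}x=0$ for some $n$, and \eqref{eq:finite signed Engel expansion} follows.

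Conversely, if $T^{n}x\neq0$ for all $n$, then $x$ is irrational by the above, and since $T^{n}x\in(0,1)$ the remainder in \eqref{eq:signed Engel expansion} is bounded by $(d_{1}\cdots d_{n})^{-1}\le 2^{-n}$. So the series \eqref{eq:infinite signed Engel expansion} converges to $x$. To get $d_{n}\to\infty$, suppose the digits stabilise at some value $2m$. Then eventually all signs $s_{n}=+1$ (by Step 1), so $T$ acts as $y\mapsto 2my-1$, which is expanding with fixed point $\frac{1}{2m-1}$. Iterating, $T^{n}x-\frac{1}{2m-1}$ grows like $(2m)^{n}$, which contradicts staying in $\left(\frac{1}{2m},\frac{1}{2m-1}\right)$ unless the orbit sits at the fixed point; that point is rational, which is excluded.

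\textbf{Step 3: uniqueness and the odd-digit claim.} For uniqueness, I would show that the first digit and sign are determined by the value of the tail sum. Given any admissible sequence (in the infinite case, with even digits and the gap condition), the tail $\sum_{j\ge2}\epsilon_{j}/(\epsilon_{1}d_{2}\cdots d_{j})$ lies in the appropriate subinterval $\left(0,\frac{1}{2k\mp1}\right)$, mirroring the images in Step 1. Hence $x$ lies in $\left(\frac{1}{2k},\frac{1}{2k-1}\right)$ or $\left(\frac{1}{2k+1},\frac{1}{2k}\right)$ according to the sign, and induction gives uniqueness. Bounding these tails is the main obstacle: they are alternating or monotone sums, and the gap condition $d_{n+1}\ge d_{n}+2$ on sign changes is exactly what keeps them strictly inside the open intervals.

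Finally, if $d_{k}(x_{0})$ is odd, then Step 1 applied to $T^{k-1}x_{0}$ gives $T^{k-1}x_{0}=\frac{1}{d_{k}}$ and $T^{k}x_{0}=0$. Here $T^{k-1}x_{0}\neq0$, since otherwise $d_{k}$ would not be defined as a digit. Substituting $n=k-1$ into \eqref{eq:signed Engel expansion}, the remainder term becomes $\epsilon_{k}/(d_{1}\cdots d_{k})$, which yields \eqref{eq:finity odd signed Engel expansion}, so $x_{0}$ is rational with exactly $k$ terms.
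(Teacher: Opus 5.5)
Your Steps 1 and 2 and the odd-digit argument are correct and follow the paper's proof. Your expanding-map argument for $d_n\to\infty$ is a valid substitute for the paper's remark that a constant-digit, constant-sign tail is a geometric series with rational sum.

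The gap is in Step 3. The estimate on which uniqueness rests is labelled ``the main obstacle'' but never proved, and the reason you give for it is wrong. You claim that for every sequence with non-decreasing even digits and the gap condition, the renormalised tail lies strictly inside $\left(0,\frac{1}{2k\mp1}\right)$. This is false:
\begin{itemize}
\item $\sigma_j=2k$, $\delta_j=1$ for all $j$ satisfies both conditions, yet $\sum_{j\geq1}(2k)^{-j}=\frac{1}{2k-1}$ is the right endpoint of $\left(\frac{1}{2k},\frac{1}{2k-1}\right)$. For $k\geq2$ the algorithm gives this number the single odd digit $2k-1$.
\item $\sigma_1=2k$ with $\sigma_j=2k+2$, $\delta_j=-1$ for all $j\geq2$ sums to $\frac{1}{2k+1}$, again an endpoint.
\end{itemize}
So the gap condition alone cannot give strictness. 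Separately, the tail must be normalised by $\epsilon_2$, not $\epsilon_1$. The relevant quantity is $\sum_{j\geq2}\epsilon_2\epsilon_j/(d_2\cdots d_j)=\epsilon_2(d_1x-1)=Tx$; with $\epsilon_1=1$ your sum is negative whenever $\epsilon_2=-1$.

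What is needed is the estimate the paper writes out via $W_n$ and the bounds $\frac{1}{d_1+1}<x<\frac{1}{d_1-1}$. Let $V=\sum_{j\geq1}\delta_j/(\sigma_1\cdots\sigma_j)$.
\begin{itemize}
\item Upper bound: comparison with $\sum_{j\geq1}\sigma_1^{-j}$ gives $V\leq\frac{1}{\sigma_1-1}$.
\item Lower bound: let $i$ be the first index with $\delta_i=-1$. All digits from $i$ on are at least $\sigma_1+2$, so the terms of index $\geq i$ have absolute values summing to at most $\frac{1}{\sigma_1\cdots\sigma_{i-1}(\sigma_1+1)}$. Hence $V\geq\frac{1}{\sigma_1+1}$ if $i=2$, and $V\geq\frac{1}{\sigma_1}$ otherwise.
\end{itemize}
Equality holds only for the two geometric tails above. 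So strictness must come from non-degeneracy: either the sum is finite, or $\sigma_n\to\infty$ in the infinite case. You should impose the latter on the series in \eqref{eq:infinite signed Engel expansion}. It is automatic when the value is irrational, since bounded digits force an eventually constant sign and hence a rational geometric tail.

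With the strict bounds applied to the shifted tail, your induction goes through: run $T$ on the value of the series to read off $\sigma_1$ and $\delta_2$, then the rest. This is equivalent to the paper's comparison at the first differing position. The same non-degeneracy is exactly what the direction ``a series of the form \eqref{eq:infinite signed Engel expansion} represents an irrational number'' requires.
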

\begin{remark}
	By \cref{eq:infinite signed Engel expansion} and \cref{pro:signed Engel Expansion}, for irrational $x$, $\{d_{n}(x)\}_{n\geq1}$ is a non-decreasing sequence of positive even integers tending to infinity, with $d_{n+1}(x)\geq d_{n}(x)+2$ whenever $\epsilon_{n+1}(x)=-\epsilon_{n}(x)$.
\end{remark}
\begin{proof}
	For each $x\in(0,1)$, noting that $d_{1}(x)\geq2$, $d_{n+1}(x)=d_{1}(T^{n}x)$, $Tx\leq x$ and that $d_{1}(x)$ is non-increasing in $x$, we have $2\leq d_{1}(x)\leq d_{2}(x)\cdots\leq d_{n}(x)$. So, $\left\{d_{n}(x)\right\}_{n\geq1}$ forms a non-decreasing sequence of positive integers. If $d_{n+1}(x)=d_{n}(x)$ or $d_{n+1}(x)=d_{n}(x)+1$, then there exists $k\in\mathbb{N}$, such that $T^{n}x,\ T^{n-1}x\in\left[\frac{1}{2k+1},\frac{1}{2k-1}\right)$, where $T^{0}$ denotes the identity mapping. It follows that $T^{n-1}x\in\left(\frac{1}{2k},\frac{1}{2k-1}\right)$, $s_{n}(x)=s_{1}(T^{n-1}x)=1$ and $\epsilon_{n+1}(x)=\epsilon_{n}(x)$. Hence, $d_{n+1}(x)\geq d_{n}(x)+2$ when $\epsilon_{n+1}(x)=-\epsilon_{n}(x)$.\\
	\indent Suppose that $x=\frac{p}{q}\in(0,1)\cap\mathbb{Q}$, where $p,\ q\in\mathbb{N}$ and $p<q$. If $p=1$, then $Tx=0$. Otherwise, by the definition of $T$, there exists a positive integer $p_{1}$ that satisfies $1\leq p_{1}<p$ such that $Tx=\frac{p_{1}}{q}$. It follows that there exists a smallest positive integer $n$ such that $T^{n}x=0$. By \cref{eq:signed Engel expansion}, \cref{eq:finite signed Engel expansion} holds. In addition, if $x_{0}$ satisfies that $d_{k}(x_{0})$ is odd for some $k\in\mathbb{N}$, then there exists a positive integer $m$, such that $d_{k}(x_{0})=d_{1}(T^{k-1}x_{0})=2m+1$, that is, $T^{k-1}x_{0}=\frac{1}{2m+1}$. Hence, $T^{k}x_{0}=T(T^{k-1}x_{0})=0$, and \cref{eq:finity odd signed Engel expansion} holds by \cref{eq:signed Engel expansion}.\\
	\indent Now, assume that $x$ is irrational. According to the algorithm, we obtain an infinite series as shown on the right-hand side of \cref{eq:infinite signed Engel expansion}, which, since $d_{1}(x)\geq2$ and $d_{n}(x)$ is non-decreasing in $n$, converges to $x$. If the sequence $\left\{d_{n}(x)\right\}_{n\geq1}$ is bounded, then there exists a positive integers $N$ such that for all $n\geq N$, $d_{n}(x)=d_{N}(x)$. By $d_{n+1}(x)\geq d_{n}(x)+2$ when $\epsilon_{n+1}(x)=-\epsilon_{n}(x)$, we have $\epsilon_{n}(x)=\epsilon_{N}(x)$ for all $n\geq N$. At this point, it can be checked that $x$ is rational, which is a contradiction. So, $d_{n}(x)\to\infty$ as $n\to\infty$. On the other hand, assume that \cref{eq:infinite signed Engel expansion} holds for some $x\in(0,1)$. Based on the discussion of rational number expansions, it can be known that rational numbers can only be written as a finite sum. Hence, $x$ must be an irrational number.\\
	\indent Finally, we take the case of irrational numbers as an example to show the uniqueness. The case of rational numbers is similar. Let $x\in(0,1)\setminus\mathbb{Q}$ and define
	$$W_{n}(x)\coloneq\frac{\epsilon_{n+1}(x)}{d_{1}(x)\cdots d_{n}(x)d_{n+1}(x)}+\frac{\epsilon_{n+2}(x)}{d_{1}(x)\cdots d_{n}(x)d_{n+1}(x)d_{n+2}(x)}+\cdots.$$
	We first show that the positivity or negativity of $W_{n}(x)$ is determined by the first term, that is the sign of $\epsilon_{n+1}(x)$. In fact, based on the conditions satisfied by $d_{n}(x)$, we have
\begin{equation}\nonumber
	\begin{split}
		\epsilon_{n+1}(x)W_{n}(x) 
		&\geq\frac{1}{d_{1}(x)\cdots d_{n}(x)d_{n+1}(x)}-\frac{1}{d_{1}(x)\cdots d_{n}(x)d_{n+1}(x)(d_{n+1}(x)+2)}\\
		&\quad -\frac{1}{d_{1}(x)\cdots d_{n}(x)d_{n+1}(x)(d_{n+1}(x)+2)^{2}}-\cdots\\
	    &=\frac{1}{d_{1}(x)\cdots d_{n}(x)d_{n+1}(x)}-\frac{1}{d_{1}(x)\cdots d_{n}(x)d_{n+1}(x)(d_{n+1}(x)+1)}\\
	    &=\frac{1}{d_{1}(x)\cdots d_{n}(x)(d_{n+1}(x)+1)}>0.
	\end{split}
\end{equation}
Subsequently, we demonstrate that different infinite series on the right-hand side of \cref{eq:infinite signed Engel expansion} correspond to distinct irrational numbers. Suppose that
\begin{equation}\nonumber
	x=\frac{1}{d_{1}(x)}+\frac{\epsilon_{2}(x)}{d_{1}(x)d_{2}(x)}+\frac{\epsilon_{3}(x)}{d_{1}(x)d_{2}(x)d_{3}(x)}+\cdots,
\end{equation}
and
\begin{equation}\nonumber
	y=\frac{1}{d_{1}(y)}+\frac{\epsilon_{2}(y)}{d_{1}(y)d_{2}(y)}+\frac{\epsilon_{3}(y)}{d_{1}(y)d_{2}(y)d_{3}(y)}+\cdots.
\end{equation}
If $d_{1}(x)\neq d_{1}(y)$, we may assume that $d_{1}(y)>d_{1}(x)$. Then, we have $d_{1}(y)\geq d_{1}(x)+2$. It follows 
$$x>\frac{1}{d_{1}(x)}-\frac{1}{d_{1}(x)(d_{1}(x)+2)}-\frac{1}{d_{1}(x)(d_{1}(x)+2)^{2}}-\cdots=\frac{1}{d_{1}(x)+1},$$
and
$$y<\frac{1}{d_{1}(y)}+\frac{1}{(d_{1}(y))^{2}}+\frac{1}{(d_{1}(y))^{3}}+\cdots=\frac{1}{d_{1}(y)-1}.$$
Hence, $x\neq y$. If $d_{1}(x)=d_{1}(y)$, but $\epsilon_{2}(x)\neq\epsilon_{2}(y)$, then, we have
$$x-\frac{1}{d_{1}(x)}=\frac{\epsilon_{2}(x)}{d_{1}(x)d_{2}(x)}+\frac{\epsilon_{3}(x)}{d_{1}(x)d_{2}(x)d_{3}(x)}+\cdots,$$
and 
$$y-\frac{1}{d_{1}(y)}=\frac{\epsilon_{2}(y)}{d_{1}(y)d_{2}(y)}+\frac{\epsilon_{3}(y)}{d_{1}(y)d_{2}(y)d_{3}(y)}+\cdots.$$
So, the signs of $x-\frac{1}{d_{1}(x)}$ and $y-\frac{1}{d_{1}(y)}$ depend on the  $\epsilon_{2}(x)$ and $\epsilon_{2}(y)$, respectively. Hence, $x\neq y$. If  $d_{1}(x)=d_{1}(y)$, and $\epsilon_{2}(x)=\epsilon_{2}(y)$, then, we obtain
$$\epsilon_{2}(x)\left(xd_{1}(x)-1\right)=\frac{1}{d_{2}(x)}+\frac{\epsilon_{2}(x)\epsilon_{3}(x)}{d_{2}(x)d_{3}(x)}+\frac{\epsilon_{2}(x)\epsilon_{3}(x)\epsilon_{4}(x)}{d_{2}(x)d_{3}(x)d_{4}(x)}+\cdots,$$
and
$$\epsilon_{2}(y)\left(yd_{1}(y)-1\right)=\frac{1}{d_{2}(y)}+\frac{\epsilon_{2}(y)\epsilon_{3}(y)}{d_{2}(y)d_{3}(y)}+\frac{\epsilon_{2}(y)\epsilon_{3}(y)\epsilon_{4}(y)}{d_{2}(y)d_{3}(y)d_{4}(y)}+\cdots.$$
By repeating the above discussion, we can complete the proof of uniqueness.
\end{proof}
 The symbolic space related to the digit sequences of signed Engel expansion is constructed as follows. Let
\begin{equation}\nonumber
	\Sigma_{1}\coloneqq\left\{\left(\sigma_{1}\right)\colon\sigma_{1} \text{ is a positive integer and }\sigma_{1}\geq2 \right\},
\end{equation}
and for any $n\in\mathbb{N}\setminus\left\{1\right\}$,
\begin{equation}\nonumber
\Sigma_{n}\coloneqq\left\{\left(\sigma_{1},\delta_{2},\sigma_{2},\cdots,\delta_{n},\sigma_{n}\right)\colon
\begin{aligned}
	&\text{ }\delta_{i}\in\left\{1,-1\right\}\textnormal{ for all }2\leq i\leq n\\
	&\text{ }2\leq \sigma_{1}\leq \sigma_{2}\leq\cdots\leq \sigma_{n},\textnormal{ where }\sigma_{i}\textnormal{ is even for all}\ 1\leq i\leq n-1\\
    & \textnormal{ if }\delta_{i+1}=-\delta_{i}, \textnormal{ then }\sigma_{i+1}\geq\sigma_{i}+2\textnormal{ for all } 1\leq i\leq n-1
\end{aligned}
	\right\},
\end{equation}
where $\delta_{1}\coloneqq1$. Define
\begin{equation}\nonumber
	\Sigma_{\infty}\coloneqq\left\{\left(\sigma_{1},\delta_{2},\sigma_{2},\delta_{3},\sigma_{3},\cdots\right)\colon
	\begin{aligned}
		&\text{ }\delta_{i}\in\left\{1,-1\right\}\textnormal{ for all }i\geq 2\\
		&\text{ }2\leq \sigma_{1}\leq \sigma_{2}\leq\sigma_{3}\cdots,\textnormal{ where }\sigma_{i}\text{ is even}\textnormal{ for all }i\in\mathbb{N}\\
		&  \textnormal{ if }\delta_{i+1}=-\delta_{i}, \textnormal{ then }\sigma_{i+1}\geq\sigma_{i}+2\textnormal{ for all }i\in\mathbb{N}
	\end{aligned}
	\right\}.
\end{equation}
By \cref{pro:signed Engel Expansion}, for all $n\in\mathbb{N}$, we also define
\begin{equation}\nonumber
	\Sigma_{n}^{'}\coloneqq\left\{\left(\sigma_{1},\cdots,\delta_{n},\sigma_{n}\right)\colon\left(\sigma_{1},\cdots,\delta_{n},\sigma_{n}\right)\in\Sigma_{n}\textnormal{ and }\sigma_{n}\textnormal{ is even }\right\}.
\end{equation}
\begin{definition}\label{def:admissible sequences}
	We say that a finite sequence $\left(\sigma_{1},\delta_{2},\sigma_{2},\cdots,\delta_{n},\sigma_{n}\right)$ for some $n\in\mathbb{N}$ is signed Engel admissible if there exists $x\in(0,1)$ such that $d_{i}(x)=\sigma_{i}$ and $\epsilon_{i}(x)=\delta_{i}$ for all $1\leq i\leq n$. An infinite sequence $\left(\sigma_{1},\delta_{2},\sigma_{2},\delta_{3},\sigma_{3},\cdots\right)$ is said to be signed Engel admissible if there exists $x\in(0,1)$ such that $d_{n}(x)=\sigma_{n}$ and $\epsilon_{n}(x)=\delta_{n}$ for all $ n\in\mathbb{N}$. 
\end{definition}
We denote by $\Sigma_{ad}$ the set of all signed Engel admissible sequences. According to \cref{pro:signed Engel Expansion}, we have $\Sigma_{ad}=\bigcup_{n\in\mathbb{N}}\Sigma_{n}\cup\Sigma_{\infty}$ .
\begin{definition}
	For any $n\in\mathbb{N}$ and any $\left(\sigma_{1},\delta_{2},\sigma_{2},\cdots,\delta_{n},\sigma_{n}\right)\in\Sigma_{n}$, we call 
	$$I_{n}\left(\sigma_{1},\delta_{2},\sigma_{2},\cdots,\delta_{n},\sigma_{n}\right)=\left\{x\in(0,1):d_{1}(x)=\sigma_{1},\epsilon_{2}(x)=\delta_{2},d_{2}(x)=\sigma_{2},\cdots,\epsilon_{n}(x)=\delta_{n},d_{n}(x)=\sigma_{n}\right\}$$ 
	a basic interval of order $n$ related to signed Engel expansion.
\end{definition}
The following proposition presents some useful descriptions of the basic intervals of signed Engel expansion.
\begin{proposition}\label{specific form of the basic interval}
	Let $\left(\sigma_{1},\delta_{2},\sigma_{2},\cdots,\delta_{n},\sigma_{n}\right)\in\Sigma_{n}$. If $\sigma_{n}$ is odd, then $I_{n}\left(\sigma_{1},\delta_{2},\sigma_{2},\cdots,\delta_{n},\sigma_{n}\right)$ is a singleton. Otherwise, $I_{n}\left(\sigma_{1},\delta_{2},\sigma_{2},\cdots,\delta_{n},\sigma_{n}\right)$ is an open interval of positive length with two endpoints
	$$x_{1}=\frac{1}{\sigma_{1}}+\frac{\delta_{2}}{\sigma_{1}\sigma_{2}}+\cdots+\frac{\delta_{n-1}}{\sigma_{1}\sigma_{2}\cdots \sigma_{n-1}}+\frac{\delta_{n}}{\sigma_{1}\sigma_{2}\cdots \sigma_{n-1}\left(\sigma_{n}-1\right)},$$
	and
	$$x_{2}=\frac{1}{\sigma_{1}}+\frac{\delta_{2}}{\sigma_{1}\sigma_{2}}+\cdots+\frac{\delta_{n-1}}{\sigma_{1}\sigma_{2}\cdots \sigma_{n-1}}+\frac{\delta_{n}}{\sigma_{1}\sigma_{2}\cdots \sigma_{n-1}\left(\sigma_{n}+1\right)}.$$
	More precisely, if $\sigma_{n}$ is even, then
	\begin{equation}\nonumber
		\begin{split}
				I_{n}&\left(\sigma_{1},\delta_{2},\sigma_{2},\cdots,\delta_{n},\sigma_{n}\right)\\
				&=\begin{cases}
				\left(\frac{1}{\sigma_{1}}+\cdots+\frac{\delta_{n-1}}{\sigma_{1}\cdots\sigma_{n-1}}+\frac{\delta_{n}}{\sigma_{1}\cdots\sigma_{n-1}\left(\sigma_{n}-1\right)},\ \frac{1}{\sigma_{1}}+\cdots+\frac{\delta_{n-1}}{\sigma_{1}\cdots\sigma_{n-1}}+\frac{\delta_{n}}{\sigma_{1}\cdots\sigma_{n-1}\left(\sigma_{n}+1\right)}\right),&\text{if}\ \delta_{n}=-1,\\
				\left(\frac{1}{\sigma_{1}}+\cdots+\frac{\delta_{n-1}}{\sigma_{1}\cdots\sigma_{n-1}}+\frac{\delta_{n}}{\sigma_{1}\cdots\sigma_{n-1}\left(\sigma_{n}+1\right)},\ \frac{1}{\sigma_{1}}+\cdots+\frac{\delta_{n-1}}{\sigma_{1}\cdots\sigma_{n-1}}+\frac{\delta_{n}}{\sigma_{1}\cdots\sigma_{n-1}\left(\sigma_{n}-1\right)}\right),&\text{if}\ \delta_{n}=1.
			\end{cases}
		\end{split}
	\end{equation}
	Hence,
	\begin{equation}\label{the length of basic interval}
		|I_{n}\left(\sigma_{1},\delta_{2},\sigma_{2},\cdots,\delta_{n},\sigma_{n}\right)|=
		\begin{cases}
			0,&\ \text{if}\ \sigma_{n}\ \text{is odd},\\
			\frac{2}{\sigma_{1}\sigma_{2}\cdots \sigma_{n-1}\left(\sigma_{n}-1\right)\left(\sigma_{n}+1\right)},&\ \text{if}\ \sigma_{n}\ \text{is even}.
		\end{cases}
	\end{equation}
\end{proposition}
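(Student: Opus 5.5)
The plan is to identify the set of $x$ with prescribed first $n$ digits and signs as the image of a single-digit cylinder under an affine map. The key identity is \cref{eq:signed Engel expansion}: for $x$ with $d_i(x)=\sigma_i$ and $\epsilon_i(x)=\delta_i$ for $1\le i\le n-1$, and $T^{n-1}x\neq 0$,
\[
x=S_{n-1}+\frac{\delta_n}{\sigma_1\cdots\sigma_{n-1}}T^{n-1}x,\qquad S_{n-1}\coloneqq\frac{1}{\sigma_1}+\cdots+\frac{\delta_{n-1}}{\sigma_1\cdots\sigma_{n-1}}.
\]
Here $\epsilon_n(x)=\delta_n$ is the sign of the coefficient. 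We then impose $d_n(x)=d_1(T^{n-1}x)=\sigma_n$.

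\textbf{Step 1: order one.} We describe $J(\sigma)\coloneqq\{y\in(0,1): d_1(y)=\sigma\}$ directly from the definition of $d_1$.
\begin{itemize}
\item For $\sigma=2k$ even: from the two branches, $y\in[\frac{1}{2k},\frac{1}{2k-1})$ gives $\lceil 1/y\rceil=2k$, except that $y=\frac{1}{2k-1}$ is excluded. Also $y\in(\frac{1}{2k+1},\frac{1}{2k})$ gives $\lfloor 1/y\rfloor=2k$, while $y=\frac{1}{2k+1}$ gives $2k+1$. Hence $J(2k)=(\frac{1}{2k+1},\frac{1}{2k-1})$, an open interval of length $\frac{2}{(2k-1)(2k+1)}$.
\item For $\sigma=2k+1$ odd: the only point is $y=\frac{1}{2k+1}$, so $J(\sigma)$ is a singleton. (The value $\sigma=1$ cannot occur, since $\sigma_1\ge2$.)
\end{itemize}

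\textbf{Step 2: induction on $n$.} The induction runs on $I_{n-1}(\sigma_1,\dots,\sigma_{n-1})$, an open interval since $\sigma_{n-1}$ is even.
\begin{itemize}
\item The map $\Phi(x)=T^{n-1}x$ restricted to $I_{n-1}$ is affine. By iterating $Tx=s_1(x)(d_1(x)x-1)$ it equals $\Phi(x)=\frac{\sigma_1\cdots\sigma_{n-1}}{\epsilon_n(x)}(x-S_{n-1})$.
\item The difficulty is that $\epsilon_n(x)$ is not constant on $I_{n-1}$; it depends on $s_{n-1}(x)$, i.e.\ on which half of $J(\sigma_{n-1})$ the point $T^{n-2}x$ lies in. So we split $I_{n-1}$ into the subintervals where $\epsilon_n=\delta_n$; on each, $\Phi$ is an affine bijection onto $(0,1)$ minus finitely many points.
\item Concretely, $J(\sigma_{n-1})$ splits at $\frac{1}{\sigma_{n-1}}$ into the two halves on which $T$ is increasing or decreasing, and each half is mapped by $T$ onto $(0,1)$.
\item Consequently $I_n(\sigma_1,\dots,\delta_n,\sigma_n)=\Phi^{-1}(J(\sigma_n))$ with $\Phi^{-1}(y)=S_{n-1}+\frac{\delta_n}{\sigma_1\cdots\sigma_{n-1}}y$ applied to $J(\sigma_n)$.
\end{itemize}

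\textbf{Step 3: reading off the answer.}
\begin{itemize}
\item If $\sigma_n$ is odd, the image is a singleton, consistent with \cref{pro:signed Engel Expansion}.
\item If $\sigma_n$ is even, we apply the map to the endpoints $\frac{1}{\sigma_n\pm1}$. This gives exactly $x_1$ and $x_2$, and their order is reversed when $\delta_n=-1$, matching the case split in the statement.
\item The length is $\frac{1}{\sigma_1\cdots\sigma_{n-1}}\cdot\frac{2}{(\sigma_n-1)(\sigma_n+1)}$.
\end{itemize}

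We expect the main obstacle to be the bookkeeping in Step 2. Two things need care there. First, we must check that admissibility (the condition $\sigma_n\ge\sigma_{n-1}+2$ when the sign flips) matches the set of $J(\sigma_n)$ actually lying inside the range of $\Phi$ on the relevant half. Second, we must check that the excluded endpoints (rationals of the form $\frac1m$ where $T$ is set to $0$) do not destroy openness. The first point follows because $T$ maps $(\frac{1}{2k},\frac{1}{2k-1})$ increasingly onto $(0,1)$ while satisfying $Ty<y$. This forces $T^{n-1}x<T^{n-2}x$, so that $d_n\ge d_{n-1}$, and if the sign flips, $d_n\ge d_{n-1}+2$.
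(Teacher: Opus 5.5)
Your route differs from the paper's. You realize $I_n$ as $S_{n-1}+\frac{\delta_n}{\sigma_1\cdots\sigma_{n-1}}J(\sigma_n)$ through the affine branches of $T^{n-1}$. The paper instead fixes the prefix and computes the extreme values of the admissible tails: all later signs equal to $\delta_n$ with digits $\sigma_n$, versus one sign change followed by digits $\sigma_n+2$.

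Your route is sound, but the step it hinges on is false as written. $T$ does not map either half of $J(2k)=(\frac{1}{2k+1},\frac{1}{2k-1})$ onto $(0,1)$. The increasing branch $y\mapsto 2ky-1$ maps $[\frac{1}{2k},\frac{1}{2k-1})$ onto $[0,\frac{1}{2k-1})$, and the decreasing branch $y\mapsto 1-2ky$ maps $(\frac{1}{2k+1},\frac{1}{2k})$ onto $(0,\frac{1}{2k+1})$. Only the branch on $(\frac12,1)$ is full. The claim also proves too much: it would put $\Phi^{-1}(J(\sigma_n))$ inside $I_n$ for every $\sigma_n$, including $\sigma_n<\sigma_{n-1}$.

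You rightly flag at the end that admissibility must be matched against the range of $\Phi$ on the relevant half, but your justification does not do it. ``Onto $(0,1)$ while satisfying $Ty<y$'' is contradictory for $k\ge 2$. Moreover, $Ty<y$ only yields the necessary direction $d_n\ge d_{n-1}$. It does not even give the $+2$ after a sign change, because it allows $T^{n-1}x\in(\frac{1}{2k+1},\frac{1}{2k})$. What the proposition needs is the converse: for a sequence in $\Sigma_n$, every point of $S_{n-1}+\frac{\delta_n}{\sigma_1\cdots\sigma_{n-1}}J(\sigma_n)$ actually has the prescribed first $n$ digits and signs.

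To close this, put $\sigma_{n-1}=2k$.
\begin{enumerate}[(1)]
\item The explicit form of $I_{n-1}$ from the induction hypothesis, together with \cref{eq:signed Engel expansion} (note $\epsilon_{n-1}\equiv\delta_{n-1}$ on $I_{n-1}$), shows that $T^{n-2}$ is an affine bijection from $I_{n-1}$ onto all of $J(2k)$.
\item Since $\epsilon_n=\delta_{n-1}s_{n-1}$, $T^{n-1}$ maps $\{x\in I_{n-1}\colon \epsilon_n(x)=\delta_n\}$ affinely onto $[0,\frac{1}{2k-1})$ if $\delta_n=\delta_{n-1}$, and onto $(0,\frac{1}{2k+1})$ if $\delta_n=-\delta_{n-1}$. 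In the first case the single point sent to $0$ has no $n$-th digit and is discarded.
\item For even $\sigma_n$, $J(\sigma_n)=(\frac{1}{\sigma_n+1},\frac{1}{\sigma_n-1})$ lies in this range exactly when $\sigma_n\ge 2k$, respectively $\sigma_n\ge 2k+2$.
\item For odd $\sigma_n$, the point $\frac{1}{\sigma_n}$ lies in it exactly when $\sigma_n\ge 2k+1$, respectively $\sigma_n\ge 2k+3$.
\end{enumerate}
These are precisely the conditions defining $\Sigma_n$. Hence $T^{n-1}$ restricts to an affine bijection from $I_n$ onto $J(\sigma_n)$, and your Step 3 goes through unchanged. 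Done this way, your argument makes explicit the surjectivity that the paper's ``if and only if'' between membership in $I_n$ and the two tail bounds leaves implicit.
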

\begin{proof}
	The case $n=1$ is trivial, and we only consider the case $n\geq2$.\\
	\indent If $\sigma_{n}$ is odd, then, by \cref{pro:signed Engel Expansion}, $I_{n}\left(\sigma_{1},\delta_{2},\sigma_{2},\cdots,\delta_{n},\sigma_{n}\right)$ contains precisely the rational point
	$$\frac{1}{\sigma_{1}}+\frac{\delta_{2}}{\sigma_{1}\sigma_{2}}+\cdots+\frac{\delta_{n}}{\sigma_{1}\sigma_{2}\cdots \sigma_{n-1}\sigma_{n}}.$$
	So, $|I_{n}\left(\sigma_{1},\delta_{2},\sigma_{2},\cdots,\delta_{n},\sigma_{n}\right)|=0$.\\
	\indent Now, suppose that $\sigma_{n}$ is even. If $\delta_{n}=-1$, then $x\in I_{n}\left(\sigma_{1},\delta_{2},\sigma_{2},\cdots,\delta_{n},\sigma_{n}\right)$ if and only if
	\begin{equation}\nonumber
		\begin{split}
			x&>\frac{1}{\sigma_{1}}+\cdots+\frac{\delta_{n-1}}{\sigma_{1}\cdots \sigma_{n-1}}+\frac{\delta_{n}}{\sigma_{1}\cdots \sigma_{n-1}\sigma_{n}}+\frac{-1}{\sigma_{1}\cdots \sigma_{n-1}\left(\sigma_{n}\right)^{2}}+\frac{-1}{\sigma_{1}\cdots \sigma_{n-1}\left(\sigma_{n}\right)^{3}}+\cdots\\
			&=\frac{1}{\sigma_{1}}+\cdots+\frac{\delta_{n-1}}{\sigma_{1}\cdots \sigma_{n-1}}+\frac{\delta_{n}}{\sigma_{1}\cdots \sigma_{n-1}\left(\sigma_{n}-1\right)},
		\end{split}
	\end{equation}
	and 
	\begin{equation}\nonumber
		\begin{split}
			x&<\frac{1}{\sigma_{1}}+\cdots+\frac{\delta_{n-1}}{\sigma_{1}\cdots \sigma_{n-1}}+\frac{\delta_{n}}{\sigma_{1}\cdots \sigma_{n-1}\sigma_{n}}+\frac{1}{\sigma_{1}\cdots \sigma_{n-1}\sigma_{n}\left(\sigma_{n}+2\right)}+\frac{1}{\sigma_{1}\cdots \sigma_{n-1}\sigma_{n}\left(\sigma_{n}+2\right)^{2}}+\cdots\\
			&=\frac{1}{\sigma_{1}}+\cdots+\frac{\delta_{n-1}}{\sigma_{1}\cdots \sigma_{n-1}}+\frac{\delta_{n}}{\sigma_{1}\cdots \sigma_{n-1}\left(\sigma_{n}+1\right)}.
		\end{split}
	\end{equation}
	Hence,
	\begin{equation}\nonumber
		\begin{split}
			I_{n}&\left(\sigma_{1},\delta_{2},\sigma_{2},\cdots,\delta_{n},\sigma_{n}\right)\\
			&=\left(\frac{1}{\sigma_{1}}+\cdots+\frac{\delta_{n-1}}{\sigma_{1}\cdots\sigma_{n-1}}+\frac{\delta_{n}}{\sigma_{1}\cdots\sigma_{n-1}\left(\sigma_{n}-1\right)},\ \frac{1}{\sigma_{1}}+\cdots+\frac{\delta_{n-1}}{\sigma_{1}\cdots\sigma_{n-1}}+\frac{\delta_{n}}{\sigma_{1}\cdots\sigma_{n-1}\left(\sigma_{n}+1\right)}\right).
		\end{split}
	\end{equation}
	The case $\delta_{n}=1$ is analogous to that for $\delta_{n}=-1$, we omit the details here.
\end{proof}
We conclude this part with a conditional probability formula. We use Lebesgue measure as the underlying probability over Borel sets in $(0,1)$. Then, the digit sequence $\left\{d_{n}\right\}_{n\geq1}$ of signed Engel expansion can be regarded as a sequence of random variables on the probability space $\left((0,1),\mathcal{B},\mathcal{L}\right)$, where $\mathcal{B}$ is the Borel $\sigma$-algebra on $(0,1)$.
\begin{proposition}\label{dn homo markov chain}
	The digit sequence $\left\{d_{n}\right\}_{n\geq1}$ forms a time-homogeneous Markov chain with initial distribution
	\begin{equation}\label{initial distribution}
		\mathcal{L}\left(d_{1}=2k\right)=\frac{2}{\left(2k-1\right)\left(2k+1\right)},
	\end{equation}
	and one step transition probabilities
	\begin{equation}\label{transition probabilities}
		\mathcal{L}\left(d_{n+1}=2l\mid d_{n}=2k\right)=
		\begin{cases}
			\frac{1}{2k},&\ \textnormal{if}\ l=k,\\
			\frac{\left(2k-1\right)\left(2k+1\right)}{k\left(2l-1\right)\left(2l+1\right)},&\ \textnormal{if}\ l\geq k+1,
		\end{cases}
	\end{equation}
	where $l,\ k\in\mathbb{N}$, and $l\geq k$. 
\end{proposition}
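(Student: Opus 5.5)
The plan is to reduce everything to the lengths of basic intervals computed in \cref{specific form of the basic interval}. First I will discard null sets. By \cref{pro:signed Engel Expansion}, a point with some odd digit is rational, so $\mathcal{L}\{x\colon d_{j}(x)\ \textnormal{odd for some } j\}=0$. Hence, up to a null set, the events $\{d_{1}=\sigma_{1},\dots,d_{n}=\sigma_{n}\}$ with all $\sigma_{i}$ even are disjoint unions of the open intervals $I_{n}(\sigma_{1},\delta_{2},\sigma_{2},\dots,\delta_{n},\sigma_{n})$. The union runs over all sign vectors $(\delta_{2},\dots,\delta_{n})$ that are admissible for $(\sigma_{1},\dots,\sigma_{n})$ in the sense of $\Sigma_{n}^{'}$. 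The initial distribution \eqref{initial distribution} is then just \eqref{the length of basic interval} with $n=1$.

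The key step is a one-step computation inside a single cylinder of order $n$. Fix $(\sigma_{1},\delta_{2},\dots,\delta_{n},\sigma_{n})\in\Sigma_{n}^{'}$ with $\sigma_{n}=2k$, and write $P\coloneqq\sigma_{1}\cdots\sigma_{n-1}$.
\begin{itemize}
\item For $l=k$, the constraint in $\Sigma_{n+1}$ forces $\delta_{n+1}=\delta_{n}$, so exactly one child cylinder occurs. By \eqref{the length of basic interval} its length is $\frac{2}{P\cdot 2k(2k-1)(2k+1)}$. Its ratio to $|I_{n}|=\frac{2}{P(2k-1)(2k+1)}$ is $\frac{1}{2k}$.
\item For $l\geq k+1$, both signs $\delta_{n+1}=\pm1$ are admissible. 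The total child length is $2\cdot\frac{2}{P\cdot 2k(2l-1)(2l+1)}$, and its ratio to $|I_{n}|$ is $\frac{(2k-1)(2k+1)}{k(2l-1)(2l+1)}$.
\end{itemize}
In both cases the conditional proportion depends only on $k$ and $l$. It does not depend on $\sigma_{1},\dots,\sigma_{n-1}$, on the signs, or on $n$.

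Next I will sum over the admissible sign vectors. This gives
\begin{equation}\nonumber
\mathcal{L}(d_{1}=\sigma_{1},\dots,d_{n}=2k,d_{n+1}=2l)=p_{k,l}\,\mathcal{L}(d_{1}=\sigma_{1},\dots,d_{n}=2k),
\end{equation}
where $p_{k,l}$ denotes the right-hand side of \eqref{transition probabilities}. Dividing shows that $\mathcal{L}(d_{n+1}=2l\mid d_{1},\dots,d_{n})=p_{d_{n}/2,\,l}$, which is the Markov property with time-homogeneous kernel $p$. Summing the same identity over $\sigma_{1},\dots,\sigma_{n-1}$ identifies $p_{k,l}$ with $\mathcal{L}(d_{n+1}=2l\mid d_{n}=2k)$.

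As a consistency check I will verify that $\sum_{l\geq k}p_{k,l}=1$. Telescoping $\sum_{l>k}\frac{2}{(2l-1)(2l+1)}=\frac{1}{2k+1}$ gives $\frac{1}{2k}+\frac{2k-1}{2k}=1$. This confirms that the children exhaust the parent cylinder up to a null set.

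The main obstacle is bookkeeping rather than analysis. The conditioning is on the digits alone, while cylinders are indexed by digits and signs, so I must argue carefully on two points. First, admissibility of $\delta_{n+1}$ depends only on whether $\sigma_{n+1}=\sigma_{n}$, so the number of sign branches is uniform across all cylinders sharing the same digits. Second, the rational endpoints and odd-digit singletons carry no mass.
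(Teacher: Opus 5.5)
Your proposal is correct and follows essentially the same route as the paper. Both arguments compute the conditional proportion as a ratio of basic-interval lengths from \cref{specific form of the basic interval}, with one admissible sign when $l=k$ and two when $l\geq k+1$, and then sum over cylinders to get the Markov property and identify the kernel. Your explicit sum over sign vectors, which passes from digit-and-sign cylinders to conditioning on $d_{1},\dots,d_{n}$ alone, makes rigorous a step the paper only asserts, and your constraint $\delta_{n+1}=\delta_{n}$ correctly replaces the paper's $\epsilon_{n+1}=1$.
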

\begin{remark}
	 For notational brevity, we often omit the generic element of a set when referring to its Lebesgue measure. For example, $\mathcal{L}(d_{1}=2k)$ and $\mathcal{L}(d_{n+1}=2l\mid d_{n}=2k)$ denote, respectively, $\mathcal{L}(\{x\in\mathbb{I}\colon d_{1}(x)=2k\})$ and $\mathcal{L}(\{x\in\mathbb{I}\colon d_{n+1}(x)=2l\mid d_{n}(x)=2k\})$.
\end{remark}
\begin{proof}
	 For any $\left(\sigma_{1},\cdots,\delta_{n-1},\sigma_{n-1},\delta_{n},2k\right)\in\Sigma_{n}$,
	 \begin{equation}\nonumber
	 	\begin{split}
	 		&\quad\mathcal{L}\left(d_{n+1}=2l\mid d_{1}=\sigma_{1},\cdots,\epsilon_{n-1}=\delta_{n-1},d_{n-1}=\sigma_{n-1},\epsilon_{n}=\delta_{n},d_{n}=2k\right)\\
	 		&=\frac{\mathcal{L}\left( d_{1}=\sigma_{1},\cdots,\epsilon_{n-1}=\delta_{n-1},d_{n-1}=\sigma_{n-1},\epsilon_{n}=\delta_{n},d_{n}=2k,d_{n+1}=2l\right)}{\mathcal{L}\left( d_{1}=\sigma_{1},\cdots,\epsilon_{n-1}=\delta_{n-1},d_{n-1}=\sigma_{n-1},\epsilon_{n}=\delta_{n},d_{n}=2k\right)}
	 	\end{split}
	 \end{equation}
	 First, by \cref{specific form of the basic interval}, we have
	 \begin{equation}\nonumber
	 	\begin{split}
	 		&\quad\mathcal{L}\left( d_{1}=\sigma_{1},\cdots,\epsilon_{n-1}=\delta_{n-1},d_{n-1}=\sigma_{n-1},\epsilon_{n}=\delta_{n},d_{n}=2k\right)\\
	 		&=\frac{2}{\sigma_{1}\cdots\sigma_{n-1}\left(2k-1\right)\left(2k+1\right)}
	 	\end{split}
	 \end{equation}
	 If $l=k$, then 
	 \begin{equation}\nonumber
	 	\begin{split}
	 		&\quad\mathcal{L}\left(d_{1}=\sigma_{1},\cdots,\epsilon_{n-1}=\delta_{n-1},d_{n-1}=\sigma_{n-1},\epsilon_{n}=\delta_{n},d_{n}=2k,d_{n+1}=2l\right)\\
	 		&=\mathcal{L}\left(d_{1}=\sigma_{1},\cdots,\epsilon_{n-1}=\delta_{n-1},d_{n-1}=\sigma_{n-1},\epsilon_{n}=\delta_{n},d_{n}=2k,\epsilon_{n+1}=1,d_{n+1}=2l\right)\\
	 		&=\frac{2}{\sigma_{1}\cdots\sigma_{n-1}\left(2k\right)\left(2l-1\right)\left(2l+1\right)}=\frac{2}{\sigma_{1}\cdots\sigma_{n-1}\left(2k\right)\left(2k-1\right)\left(2k+1\right)}.
	 	\end{split}
	 \end{equation}
	 If $l>k$, then
	 \begin{equation}\nonumber
	 	\begin{split}
	 		&\quad\mathcal{L}\left(d_{1}=\sigma_{1},\cdots,\epsilon_{n-1}=\delta_{n-1},d_{n-1}=\sigma_{n-1},\epsilon_{n}=\delta_{n},d_{n}=2k,d_{n+1}=2l\right)\\
	 		&=\mathcal{L}\left(d_{1}=\sigma_{1},\cdots,\epsilon_{n-1}=\delta_{n-1},d_{n-1}=\sigma_{n-1},\epsilon_{n}=\delta_{n},d_{n}=2k,\epsilon_{n+1}=1,d_{n+1}=2l\right)\\
	 		&\quad+\mathcal{L}\left(d_{1}=\sigma_{1},\cdots,\epsilon_{n-1}=\delta_{n-1},d_{n-1}=\sigma_{n-1},\epsilon_{n}=\delta_{n},d_{n}=2k,\epsilon_{n+1}=-1,d_{n+1}=2l\right)\\
	 		&=\frac{4}{\sigma_{1}\cdots\sigma_{n-1}\left(2k\right)\left(2l-1\right)\left(2l+1\right)}.
	 	\end{split}
	 \end{equation}
	 Hence, we have
	 \begin{equation}\nonumber
	 	\begin{split}
	 		&\quad\mathcal{L}\left(d_{n+1}=2l\mid d_{1}=\sigma_{1},\cdots,\epsilon_{n-1}=\delta_{n-1},d_{n-1}=\sigma_{n-1},\epsilon_{n}=\delta_{n},d_{n}=2k\right)\\
	 		&=
	 		\begin{cases}
	 			\frac{1}{2k},\quad &\text{if}\ l=k,\\
	 			\frac{\left(2k-1\right)\left(2k+1\right)}{k\left(2l-1\right)\left(2k+1\right)},\quad &\text{if}\ l\geq k+1,
	 		\end{cases}
	 	\end{split}
	 \end{equation}
	 which implies that the sequence $\left\{d_{n}\right\}_{n\geq1}$ forms a time-homogeneous Markov chain. \cref{initial distribution} follows directly from \cref{specific form of the basic interval}.\\
	 \indent Now, we prove \cref{transition probabilities}.
   	By \cref{specific form of the basic interval}, we have
	\begin{equation}\nonumber
		\begin{split}
			\mathcal{L}\left(d_{n}=2k\right)
			&=\mathcal{L}\left(\bigcup_{\left(\sigma_{1},\cdots,\delta_{n-1},\sigma_{n-1},\delta_{n},2k\right)\in\Sigma_{n}}I_{\left(\sigma_{1},\cdots,\delta_{n-1},\sigma_{n-1},\delta_{n},2k\right)}\right)\\
			&=\frac{2}{\left(2k-1\right)\left(2k+1\right)}\sum_{\left(\sigma_{1},\cdots,\delta_{n-1},\sigma_{n-1},\delta_{n},2k\right)\in\Sigma_{n}}\frac{1}{\sigma_{1}\cdots \sigma_{n-1}},
		\end{split}
	\end{equation}
	and
	\begin{equation}\nonumber
		\begin{split}
			\mathcal{L}\left(d_{n}=2k,d_{n+1}=2l\right)
			&=\mathcal{L}\left(\bigcup_{\left(\sigma_{1},\cdots,\delta_{n-1},\sigma_{n-1},\delta_{n},2k,\delta_{n+1},2l\right)\in\Sigma_{n+1}}I_{\left(\sigma_{1},\cdots,\delta_{n-1},\sigma_{n-1},\delta_{n},2k,\delta_{n+1},2l\right)}\right)\\
			&=\frac{1}{k\left(2l-1\right)\left(2l+1\right)}\sum_{\left(\sigma_{1},\cdots,\delta_{n-1},\sigma_{n-1},\delta_{n},2k,\delta_{n+1},2l\right)\in\Sigma_{n+1}}\frac{1}{\sigma_{1}\cdots \sigma_{n-1}}.\\
		\end{split}
	\end{equation}
	If $l=k$, then $\delta_{n+1}=1$. It follows that
	\begin{equation}\nonumber
		\mathcal{L}\left(d_{n}=2k,d_{n+1}=2l\right)=\frac{1}{k\left(2k-1\right)\left(2k+1\right)}\sum_{\left(\sigma_{1},\cdots,\delta_{n-1},\sigma_{n-1},\delta_{n},2k\right)\in\Sigma_{n}}\frac{1}{\sigma_{1}\cdots \sigma_{n-1}}.
	\end{equation}
	Hence, $\mathcal{L}\left(d_{n+1}=2k\mid d_{n}=2k\right)=\frac{1}{2k}$. If $l>k$, then $\delta_{n+1}$ can be either $1$ or $-1$. Noting that $$I_{\left(\sigma_{1},\cdots,\delta_{n-1},\sigma_{n-1},\delta_{n},2k,1,2l\right)}=I_{\left(\sigma_{1},\cdots,\delta_{n-1},\sigma_{n-1},\delta_{n},2k,-1,2l\right)},$$ we get
	\begin{equation}\nonumber
		\mathcal{L}\left(d_{n}=2k,d_{n+1}=2l\right)=\frac{2}{k\left(2l-1\right)\left(2l+1\right)}\sum_{\left(\sigma_{1},\cdots,\delta_{n-1},\sigma_{n-1},\delta_{n},2k\right)\in\Sigma_{n}}\frac{1}{\sigma_{1}\cdots \sigma_{n-1}}.
	\end{equation}
	Hence,
	\begin{equation}\nonumber
		\mathcal{L}\left(d_{n+1}=2l\mid d_{n}=2k\right)=\frac{\left(2k-1\right)\left(2k+1\right)}{k\left(2l-1\right)\left(2l+1\right)}.
	\end{equation}
    The proof is completed.
\end{proof}
The Borel--Cantelli lemma is frequently used in our proof, and we state it here.
\begin{lemma}\label{Borel--Cantelli lemma}(\cite[Theorems 2.3.1 and 2.3.7]{MR3930614})
	Let $(\Omega,\mathcal{F},\mathbb{P})$ be a probability space and $\{E_{n}\}_{n\geq1}$ be a sequence of events. The following results hold.
	\begin{enumerate}[(1)]
		\item If $\sum_{n=1}^{\infty}\mathbb{P}(E_{n})<\infty$, then $\mathbb{P}(E_{n}\ \textnormal{i.m.}\ n)=0$.
		\item  If $\sum_{n=1}^{\infty}\mathbb{P}(E_{n})=\infty$ and the events $E_{n}$ are independent, then $\mathbb{P}(E_{n}\ \textnormal{i.m.}\ n)=1$.
	\end{enumerate}
\end{lemma}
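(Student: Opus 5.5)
The statement just above is the Borel--Cantelli lemma, quoted from \cite[Theorems 2.3.1 and 2.3.7]{MR3930614}. The paper cites it rather than proves it, but the standard argument is short, and I would reproduce it as follows.

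For part (1), I would write $\{E_{n}\ \textnormal{i.m.}\ n\}=\bigcap_{N\geq1}\bigcup_{n\geq N}E_{n}$. For every $N$, monotonicity and countable subadditivity give
\begin{equation}\nonumber
\mathbb{P}(E_{n}\ \textnormal{i.m.}\ n)\leq\mathbb{P}\Bigl(\bigcup_{n\geq N}E_{n}\Bigr)\leq\sum_{n\geq N}\mathbb{P}(E_{n}).
\end{equation}
The right-hand side is the tail of a convergent series, so it tends to $0$ as $N\to\infty$. This proves (1).

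For part (2), I would pass to complements. It suffices to show that $\mathbb{P}\bigl(\bigcap_{n\geq N}E_{n}^{c}\bigr)=0$ for every $N$, because the complement of $\{E_{n}\ \textnormal{i.m.}\ n\}$ is the countable union $\bigcup_{N}\bigcap_{n\geq N}E_{n}^{c}$. Independence of the $E_{n}$ implies independence of the complements. Combining this with continuity from above and the inequality $1-t\leq e^{-t}$, I get for $M\geq N$
\begin{equation}\nonumber
\mathbb{P}\Bigl(\bigcap_{n=N}^{M}E_{n}^{c}\Bigr)=\prod_{n=N}^{M}\bigl(1-\mathbb{P}(E_{n})\bigr)\leq\exp\Bigl(-\sum_{n=N}^{M}\mathbb{P}(E_{n})\Bigr).
\end{equation}
Since the series diverges, the right-hand side tends to $0$ as $M\to\infty$. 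Letting $M\to\infty$ on the left then gives $\mathbb{P}\bigl(\bigcap_{n\geq N}E_{n}^{c}\bigr)=0$.

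The only step needing any care is the passage from pairwise to full independence of the complements. This follows because mutual independence of events is preserved when any subfamily is replaced by complements, which is checked by inclusion--exclusion on finite intersections. Beyond that there is no real obstacle.
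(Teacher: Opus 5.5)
Your proof is correct and is the standard one: the paper gives no proof and only cites Durrett, whose argument for (2) is exactly your bound $\mathbb{P}\bigl(\bigcap_{n=N}^{M}E_{n}^{c}\bigr)=\prod_{n=N}^{M}(1-\mathbb{P}(E_{n}))\leq\exp\bigl(-\sum_{n=N}^{M}\mathbb{P}(E_{n})\bigr)$, and whose argument for (1) uses $\mathbb{E}\sum_{n}\mathbf{1}_{E_{n}}=\sum_{n}\mathbb{P}(E_{n})<\infty$ in place of your tail bound, an inessential difference. One wording slip: the step you need is not a passage ``from pairwise to full independence'' (pairwise independence would not give the product formula for three or more sets), but the fact you go on to justify correctly, namely that mutual independence is preserved when events are replaced by their complements.
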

\section{Proofs of the results}\label{proof of the results}
\subsection{Proofs of Limit Theorems}\label{proof of the limit theorems}
Since the distributional properties of a Markov chain are uniquely determined by its initial distribution and transition probabilities, one can study the metric theory of the sequence $\left\{d_{n}\right\}_{n\geq1}$ through any Markov chain to which \cref{initial distribution,transition probabilities} apply. Based on \cref{initial distribution,transition probabilities}, we introduce the following  Markov chain $\left\{D_{n}\right\}_{n\geq1}$ as a substitute for the sequence $\left\{d_{n}\right\}_{n\geq1}$. For this purpose, we first define an even function. For any $t\in[1,\infty)$, let
\begin{equation}\nonumber
	[t]_{E}\coloneq2k,\ \ \textnormal{if }\ 2k-1\leq t<2k+1,\ k\in\mathbb{N}.
\end{equation}
\indent On the probability space $\left((0,1),\mathcal{B},\mathcal{L}\right)$, let $\left\{X_{i}\right\}_{i\geq1}$ be a sequence of independent and identically distributed non-negative random variables, each following an exponential distribution with a rate of $1$. For any $x\in(0,1)$ and $n\in\mathbb{N}$, define 
\begin{equation}\nonumber
	D_{1}(x)\coloneq\left[\exp\left(X_{1}\left(x\right)\right)\right]_{E},\quad\textnormal{ and }\quad D_{n+1}(x)\coloneq\left[\frac{\left(D_{n}(x)-1\right)\left(D_{n}(x)+1\right)}{D_{n}(x)}\exp\left(X_{n+1}\left(x\right)\right)\right]_{E}.
\end{equation}
The following lemma implies that $\left\{D_{n}\right\}_{n\geq1}$ is indeed a Markov chain.
\begin{lemma}\label{Dn homo Markov chain}
	The sequence $\left\{D_{n}\right\}_{n\geq1}$ forms a time-homogeneous Markov chain with its initial distribution and one step transition probabilities satisfying \cref{initial distribution} and \cref{transition probabilities}, respectively.
\end{lemma}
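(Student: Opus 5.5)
The plan is to check the Markov property directly from the recursive definition, and then compute the initial law and the transition kernel from the exponential distribution.

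\textbf{Markov property.} By construction, $D_{n+1}=F(D_n,X_{n+1})$, where $F(d,u)\coloneq\left[\frac{(d-1)(d+1)}{d}e^{u}\right]_{E}$ is a fixed Borel map. The variable $X_{n+1}$ is independent of $(X_1,\dots,X_n)$. Each of $D_1,\dots,D_n$ is a measurable function of $(X_1,\dots,X_n)$, so $X_{n+1}$ is independent of $(D_1,\dots,D_n)$ as well. Hence, for any even $\sigma_1,\dots,\sigma_{n-1},2k$,
$$\mathcal{L}(D_{n+1}=2l\mid D_1=\sigma_1,\dots,D_n=2k)=\mathcal{L}\big(F(2k,X_{n+1})=2l\big),$$
and the right-hand side does not depend on $n$ or on the earlier states. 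This gives a time-homogeneous Markov chain. It only remains to compute the two distributions, using $\mathcal{L}(e^{X}\ge s)=1/s$ for $s\ge1$.

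\textbf{Initial distribution.} Since $e^{X_1}\ge1$, we have $D_1=2k$ exactly when $2k-1\le e^{X_1}<2k+1$. This has probability
$$\frac{1}{2k-1}-\frac{1}{2k+1}=\frac{2}{(2k-1)(2k+1)},$$
which is \cref{initial distribution}.

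\textbf{Transitions.} Write $c_k\coloneq\frac{(2k-1)(2k+1)}{2k}$. Then $F(2k,X)=2l$ exactly when $\frac{2l-1}{c_k}\le e^{X}<\frac{2l+1}{c_k}$. Because $c_k=2k-\frac{1}{2k}$ lies in $[2k-1,2k+1)$ and $e^{X}\ge1$, the smallest possible value is $l=k$, with lower threshold $\frac{2k-1}{c_k}<1$ absorbed by $e^X\ge1$. There are two cases.

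For $l=k$, the probability is
$$1-\frac{c_k}{2k+1}=1-\frac{2k-1}{2k}=\frac{1}{2k}.$$

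For $l\ge k+1$, we have $(2l-1)/c_k>1$, and the probability is
$$c_k\left(\frac{1}{2l-1}-\frac{1}{2l+1}\right)=\frac{(2k-1)(2k+1)}{k(2l-1)(2l+1)}.$$
Both values agree with \cref{transition probabilities}.

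The only delicate point is this boundary bookkeeping: one has to confirm that $c_k\ge 2k-1$, so that no mass goes to states below $2k$, and that the lower cutoff $e^X\ge1$ is correctly included in the case $l=k$. Everything else is routine.
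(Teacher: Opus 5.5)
Your proof is correct and follows the paper's argument. The Markov property comes from the independence of $X_{n+1}$ from $(X_1,\dots,X_n)$, and the initial law and transition probabilities are read off from $\mathcal{L}(e^{X}\ge s)=1/s$, with the same interval endpoints as in the paper. Your explicit conditioning on the whole past $(D_1,\dots,D_n)$ and your check that $c_k>2k-1$ are small clarifications that the paper leaves implicit.
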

\begin{proof}
	Let $k,\ l\in\mathbb{N}$ and $l\geq k$. First, we have 
	\begin{equation}\nonumber
		\mathcal{L}\left(D_{1}=2k\right)=\mathcal{L}\left(\left[\exp X_{1}\right]_{E}=2k\right)=\mathcal{L}\left(2k-1\leq\exp X_{1}<2k+1\right)=\frac{2}{\left(2k-1\right)\left(2k+1\right)}.
	\end{equation}
	For each $n\in\mathbb{N}$, since $D_{n}$ depends on $X_{1},\ X_{2},\cdots,\ X_{n}$ and is independent of $X_{n+1}$, it follows that
	\begin{equation}\nonumber
		\begin{split}
			\mathcal{L}\left(D_{n+1}=2l\mid D_{n}=2k\right)
			&=\mathcal{L}\left(\left[\frac{\left(2k-1\right)\left(2k+1\right)}{2k}\exp X_{n+1}\right]_{E}=2l\mid D_{n}=2k\right)\\
			&=\mathcal{L}\left(\left[\frac{\left(2k-1\right)\left(2k+1\right)}{2k}\exp X_{n+1}\right]_{E}=2l\right).
		\end{split}
	\end{equation}
	When $l=k$, 
	\begin{equation}\nonumber
		\mathcal{L}\left(D_{n+1}=2k\mid D_{n}=2k\right)=\mathcal{L}\left(\frac{2k}{2k+1}\leq\exp X_{n+1}<\frac{2k}{2k-1}\right)=\mathcal{L}\left(1\leq\exp X_{n+1}<\frac{2k}{2k-1}\right)=\frac{1}{2k}.
	\end{equation}
	Otherwise, that is, $l>k$, we get
	\begin{equation}\nonumber
		\mathcal{L}\left(D_{n+1}=2l\mid D_{n}=2k\right)=\mathcal{L}\left(\frac{2k\left(2l-1\right)}{\left(2k-1\right)\left(2k+1\right)}\leq\exp X_{n+1}<\frac{2k\left(2l+1\right)}{\left(2k-1\right)\left(2k+1\right)}\right)=\frac{\left(2k-1\right)\left(2k+1\right)}{k\left(2l-1\right)\left(2l+1\right)}.
	\end{equation}
\end{proof}
The lemma below states that for almost all $x\in(0,1)$, the equality $D_{n}(x)=D_{n+1}(x)$ holds for only finitely many $n$. Similarly, the digit sequence $\left\{d_{n}\right\}_{n\geq1}$ of signed Engel expansion shares the same property.
\begin{lemma}\label{dn=dn+1 for finitely many n}
	We have
	\begin{equation}\nonumber
		\mathcal{L}\left(\left\{x\in\left(0,1\right)\colon D_{n+1}(x)=D_{n}(x)~~\textnormal{i.m.}~n\right\}\right)=0.
	\end{equation}
\end{lemma}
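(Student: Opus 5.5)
We need to show that the events $E_n=\{D_{n+1}=D_n\}$ occur only finitely often almost surely. The plan is to apply the first part of \cref{Borel--Cantelli lemma}: it suffices to show $\sum_{n}\mathcal{L}(E_n)<\infty$. By the Markov property from \cref{Dn homo Markov chain}, conditioning on $D_n$ gives
\begin{equation}\nonumber
\mathcal{L}\left(D_{n+1}=D_{n}\right)=\sum_{k\geq1}\mathcal{L}\left(D_{n}=2k\right)\frac{1}{2k}=\mathbb{E}\left[\frac{1}{D_{n}}\right].
\end{equation}
So the task reduces to showing that $\mathbb{E}[1/D_n]$ decays geometrically, or at least summably.

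For this I use the explicit construction of $D_n$. Since $[t]_E\geq t-1$ and $[t]_E\geq 2$, we have $[t]_E\geq t/2$ for $t\geq1$; in fact $[t]_E\geq \tfrac{2}{3}t$ for $t\in[1,3)$, and generally $[t]_E\geq t\cdot\tfrac{2k}{2k+1}$. Also $\frac{(D_n-1)(D_n+1)}{D_n}=D_n-\frac{1}{D_n}\geq \frac{3}{4}D_n$, because $D_n\geq2$. Hence
\begin{equation}\nonumber
D_{n+1}\geq \tfrac12\cdot\tfrac34 D_n e^{X_{n+1}},
\end{equation}
and iterating gives $D_n\geq c^{\,n}\exp(X_1+\cdots+X_n)$ with $c=3/8$. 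This constant is too lossy, since $\mathbb{E}e^{-X}=1/2$ and $c^{-1}/2>1$. I would therefore also use the trivial bound $D_n\geq D_{n-1}$ (note $[t]_E$ is monotone and $\frac{(2k-1)(2k+1)}{2k}\geq 2k-1$, so $D_{n+1}\geq 2k$).

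The cleaner route is to compute $\mathbb{E}[1/D_{n+1}\mid D_n=2k]$ directly from the transition probabilities:
\begin{equation}\nonumber
\mathbb{E}\left[\frac{1}{D_{n+1}}\,\Big|\,D_n=2k\right]=\frac{1}{2k}\cdot\frac{1}{2k}+\sum_{l>k}\frac{(2k-1)(2k+1)}{k(2l-1)(2l+1)}\cdot\frac{1}{2l}.
\end{equation}
Bounding $\frac{1}{2l}\leq\frac{1}{2k+2}$ and summing the telescoping series $\sum_{l>k}\frac{2}{(2l-1)(2l+1)}=\frac{1}{2k+1}$, this is at most
\begin{equation}\nonumber
\frac{1}{4k^2}+\frac{2k-1}{2k(2k+2)}\leq \rho\cdot\frac{1}{2k}
\end{equation}
for some $\rho<1$ uniform in $k\geq1$. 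Indeed, multiplying by $2k$ gives $\frac{1}{2k}+\frac{2k-1}{2k+2}$, which equals $3/4$ at $k=1$ but tends to $1$ as $k\to\infty$. So a uniform $\rho<1$ is not available; this is the main obstacle.

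To resolve it, I accept $\mathbb{E}[1/D_{n+1}\mid D_n]\leq \frac{1}{D_n}\bigl(1-\frac{3}{D_n+2}+\frac{2}{D_n}\bigr)$ and instead exploit the genuine exponential growth. Split on $\{D_n\geq e^{n/2}\}$:
\begin{equation}\nonumber
\mathbb{E}[1/D_n]\leq e^{-n/2}+\mathcal{L}\left(D_n<e^{n/2}\right).
\end{equation}
From the construction, $D_{n+1}\geq (D_n-1/D_n)e^{X_{n+1}}-1$. Taking logarithms, $\log D_{n+1}\geq \log D_n+X_{n+1}-O(1/D_n)$, and hence $\log D_n\geq X_1+\cdots+X_n-C\sum_{j}1/D_j$. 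To control the error sum, I compare instead with the cruder bound $\log D_n\geq \sum_j X_j-n\log(8/3)$. This yields a positive drift only if $1>\log(8/3)\approx0.98$, which holds. So $\mathcal{L}(D_n<e^{\eta n})\leq\mathcal{L}\bigl(\sum_{j\le n} X_j<(\eta+0.981)n\bigr)$ with $\eta$ small, which decays exponentially by the Chernoff (large deviation) bound for Exp(1) sums, since $\eta+0.981<1$.

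Thus $\mathbb{E}[1/D_n]\leq e^{-\eta n}+Ce^{-\gamma n}$ is summable, and the first part of \cref{Borel--Cantelli lemma} gives the claim. The delicate point is making the constant in $[t]_E\geq ct$ and $D-1/D\geq c'D$ good enough, i.e. keeping the product $cc'$ above $e^{-1}$. I would tighten this using $[t]_E\geq t-1$ together with $D_n\to\infty$ if needed.
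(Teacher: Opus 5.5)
Your final argument is correct, but it is not the paper's route. The obstacle that pushed you off the paper's route is not real. The factor you compute,
$2k\,\mathbb{E}[1/D_{n+1}\mid D_n=2k]=\frac{1}{2k}+\sum_{l>k}\frac{(2k-1)(2k+1)}{l(2l-1)(2l+1)}$, is exactly what the paper bounds. It tends to $\frac12$, as $\mathbb{E}e^{-X}=\frac12$ suggests, not to $1$. Your limit $1$ comes only from replacing every $\frac{1}{2l}$ by $\frac{1}{2k+2}$, which ignores that $l/k$ is spread out like $e^{X}$.

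The paper keeps the $l=k+1$ term exactly, uses $\frac1l\le\frac{2}{2l-3}$ for $l\ge k+2$, and telescopes. This gives $\frac{1}{2k}+\frac{2k-1}{(k+1)(2k+3)}+\frac{2k-1}{2(2k+3)}\le\frac{11}{12}$ uniformly in $k$. Hence $\mathbb{E}[1/D_n]\le(\frac{11}{12})^{n-1}\mathbb{E}[1/D_1]$, and the first Borel--Cantelli lemma finishes. That argument uses only the initial law and the transition kernel, so it gives a clean geometric rate and applies verbatim to $d_n$ itself.

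Your fallback uses the explicit coupling instead. You have $D_n\ge(\frac38)^n e^{S_n}$ with $S_n=X_1+\cdots+X_n$, so $\{D_n<e^{\eta n}\}\subset\{S_n<(\eta+\log\frac83)n\}$. The Chernoff bound $\mathcal{L}(S_n<an)\le(ae^{1-a})^n$ for $a<1$ makes this exponentially small. Combined with $\mathbb{E}[1/D_n]\le e^{-\eta n}+\mathcal{L}(D_n<e^{\eta n})$, this is summable. That is valid, but tidy it up in three places:
\begin{itemize}
\item The split must be at $e^{\eta n}$ with $\eta<1-\log\frac83$, not at $e^{n/2}$ as you first wrote. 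The $e^{n/2}$ split would need $\frac12+\log\frac83<1$, which is false.
\item Your constant is better than you feared. Since $[t]_E>\frac{2k}{2k+1}t\ge\frac23 t$ for every $t\ge1$, you get $D_{n+1}\ge\frac12 D_n e^{X_{n+1}}$ and a drift of $1-\log 2$, so no further tightening is needed.
\item The intermediate bound with $1-\frac{3}{D_n+2}+\frac{2}{D_n}$ and the $O(1/D_n)$ logarithmic expansion play no role and should be dropped.
\end{itemize}

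Compared with the paper, your route avoids the fine series estimate. The cost is that it needs the coupling plus a large-deviation bound, and it yields a far slower, though still geometric, rate.
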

\begin{proof}
	For each $n\geq2$, by the law of total probability and \cref{Dn homo Markov chain}, we have
	\begin{equation}\nonumber
		\begin{split}
			\mathcal{L}\left(\left\{x\in\left(0,1\right)\colon D_{n+1}(x)=D_{n}(x)\right\}\right)
			&=\sum_{k=1}^{\infty}\mathcal{L}\left(D_{n+1}=D_{n}\mid D_{n}=2k\right)\cdot\mathcal{L}\left(D_{n}=2k\right)=\sum_{k=1}^{\infty}\frac{\mathcal{L}\left(D_{n}=2k\right)}{2k}\\
			&=\sum_{k=1}^{\infty}\sum_{j=1}^{k}\frac{\mathcal{L}\left(D_{n}=2k\mid D_{n-1}=2j\right)}{2k}\cdot\mathcal{L}\left(D_{n-1}=2j\right)\\
			&=\sum_{j=1}^{\infty}\frac{\mathcal{L}\left(D_{n-1}=2j\right)}{2j}\sum_{k=j}^{\infty}\frac{j}{k}\mathcal{L}\left(D_{n}=2k\mid D_{n-1}=2j\right).
		\end{split}
	\end{equation}
	By \cref{Dn homo Markov chain}, we obtain
	\begin{equation}\nonumber
		\begin{split}
			\sum_{k=j}^{\infty}\frac{j}{k}\mathcal{L}\left(D_{n}=2k\mid D_{n-1}=2j\right)
			&=\frac{1}{2j}+\sum_{k=j+1}^{\infty}\frac{\left(2j-1\right)\left(2j+1\right)}{k\left(2k-1\right)\left(2k+1\right)}\\
			&\leq\frac{1}{2j}+\frac{2j-1}{\left(j+1\right)\left(2j+3\right)}+\sum_{k=j+2}^{\infty}\frac{2\left(2j-1\right)\left(2j+1\right)}{\left(2k-3\right)\left(2k-1\right)\left(2k+1\right)}\\
			&=\frac{1}{2j}+\frac{2j-1}{\left(j+1\right)\left(2j+3\right)}+\frac{2j-1}{2\left(2j+3\right)}.
		\end{split}
	\end{equation}
	Then $\sum_{k=1}^{\infty}\frac{1}{k}\mathcal{L}\left(D_{n}=2k\mid D_{n-1}=2\right)\leq\frac{7}{10}$, $\sum_{k=2}^{\infty}\frac{2}{k}\mathcal{L}\left(D_{n}=2k\mid D_{n-1}=4\right)\leq\frac{17}{28}$, and for any $j\geq3$, $\sum_{k=j}^{\infty}\frac{j}{k}\mathcal{L}\left(D_{n}=2k\mid D_{n-1}=2j\right)\leq\frac{11}{12}$. So, by induction on $n$, we have
	\begin{equation}\nonumber
		\begin{split}
			\mathcal{L}\left(\left\{x\in\left(0,1\right)\colon D_{n+1}(x)=D_{n}(x)\right\}\right)
			&=\sum_{k=1}^{\infty}\frac{\mathcal{L}\left(D_{n}=2k\right)}{2k}\leq\frac{11}{12}\sum_{j=1}^{\infty}\frac{\mathcal{L}\left(D_{n-1}=2j\right)}{2j}\\
			&\leq\cdots\leq\left(\frac{11}{12}\right)^{n-1}\sum_{j=1}^{\infty}\frac{\mathcal{L}\left(D_{1}=2j\right)}{2j}.
		\end{split}
	\end{equation}
	It follows that 
	\begin{equation}\nonumber
		\sum_{n=1}^{\infty}\mathcal{L}\left(\left\{x\in\left(0,1\right)\colon D_{n+1}(x)=D_{n}(x)\right\}\right)<\infty.
	\end{equation} 
	By \cref{Borel--Cantelli lemma}, we obtain the desired result.
\end{proof}
\begin{proof}[Proof of \cref{limit theorems}]
	For any $n\in\mathbb{N}$, let $S_{n}\coloneq\sum_{k=1}^{n}X_{k}$. According to  \cite[Theorems 2.4.1, 3.4.1, and 8.5.2]{MR3930614}, we can obtain the corresponding limit theorems by replacing $\log d_{n}(x)$ with $S_{n}(x)$ in \cref{limit theorems}.\\
	\indent By the definition, for any $x\in(0,1)$ and $n\geq2$, we have 
	\begin{equation}\label{Dn control Dn-1}
		D_{n-1}(x)\left(1-\frac{1}{D_{n-1}^{2}(x)}\right)\exp\left(X_{n}(x)\right)-1\leq D_{n}(x)<D_{n-1}(x)\exp\left(X_{n}(x)\right)+1.
	\end{equation}
	Based on \cref{dn=dn+1 for finitely many n} and the fact that $D_{n}(x)$ is even, for almost all $x\in(0,1)$, for sufficiently large $n$, we have
	\begin{equation}\nonumber
		D_{n-1}(x)\left(1-\frac{1}{n}\right)\exp\left(X_{n}(x)\right)\leq D_{n}(x)<D_{n-1}(x)\left(1+\frac{1}{n}\right)\exp\left(X_{n}(x)\right),
	\end{equation}
	that is,
	\begin{equation}\nonumber
		X_{n}(x)+\log\left(1-\frac{1}{n}\right)\leq \log D_{n}(x)-\log D_{n-1}(x)<X_{n}(x)+\log\left(1+\frac{1}{n}\right).
	\end{equation}
	A simple summation shows that $\log D_{n}(x)-S_{n}(x)=o(\sqrt{n})$, which implies that $\log D_{n}(x)$ and $S_{n}(x)$ share the same limit theorems. The proof is completed.
\end{proof}
The proof of limit theorems for the gap sequence $\left\{\Delta_{n}\right\}_{n\geq1}$ needs the following result. We state it here and omit its proof, which can be found in \cite{MR4954368}.
\begin{lemma}\label{lim of Xn divided square root of n}(\cite[Lemma 3.4]{MR4954368})
	For Lebesgue almost all $x\in(0,1)$,
	\begin{equation}\nonumber
		\lim_{n\to\infty}\frac{\log\left(\exp\left(X_{n}(x)-1\right)\right)}{\sqrt{n}}=0\quad\textnormal{and}\quad\lim_{n\to\infty}\frac{X_{n}(x)}{\sqrt{n}}=0.
	\end{equation}
\end{lemma}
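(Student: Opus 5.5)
We will prove both limits using only the first part of the Borel--Cantelli lemma (\cref{Borel--Cantelli lemma}) and the explicit exponential tails of the $X_{n}$. Note that the first expression should be read as $\log\left(\exp\left(X_{n}(x)\right)-1\right)$. This quantity is well defined except on the null set where $X_{n}(x)=0$ for some $n$, and we discard that set.

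\emph{Second limit.} Since $X_{n}\geq 0$, it suffices to show that for each fixed $\varepsilon>0$, $X_{n}\geq\varepsilon\sqrt{n}$ happens for only finitely many $n$ almost surely. We have $\mathcal{L}(X_{n}\geq\varepsilon\sqrt{n})=e^{-\varepsilon\sqrt{n}}$. The series $\sum_{n}e^{-\varepsilon\sqrt{n}}$ converges, for instance by comparison with $\int_{0}^{\infty}e^{-\varepsilon\sqrt{t}}\,dt=2/\varepsilon^{2}$. \cref{Borel--Cantelli lemma}(1) then gives $\varlimsup_{n\to\infty} X_{n}/\sqrt{n}\leq\varepsilon$ almost everywhere. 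Taking $\varepsilon=1/m$ and intersecting over $m\in\mathbb{N}$ gives $X_{n}/\sqrt{n}\to0$ almost everywhere.

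\emph{First limit, upper bound.} We use $\log(e^{X_{n}}-1)<X_{n}$. Combined with the second limit, this gives $\varlimsup_{n\to\infty}\log(e^{X_{n}}-1)/\sqrt{n}\leq0$ almost everywhere.

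\emph{First limit, lower bound.} This is the only step needing care, because $\log(e^{X_{n}}-1)$ tends to $-\infty$ when $X_{n}$ is near $0$. Fix $\varepsilon>0$. The event $\log(e^{X_{n}}-1)\leq-\varepsilon\sqrt{n}$ is exactly $X_{n}\leq\log\left(1+e^{-\varepsilon\sqrt{n}}\right)$. Its probability is
\[
1-\frac{1}{1+e^{-\varepsilon\sqrt{n}}}=\frac{e^{-\varepsilon\sqrt{n}}}{1+e^{-\varepsilon\sqrt{n}}}\leq e^{-\varepsilon\sqrt{n}},
\]
which is again summable. By \cref{Borel--Cantelli lemma}(1), almost everywhere $\log(e^{X_{n}}-1)>-\varepsilon\sqrt{n}$ for all sufficiently large $n$. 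Intersecting over $\varepsilon=1/m$ gives $\varliminf_{n\to\infty}\log(e^{X_{n}}-1)/\sqrt{n}\geq0$ almost everywhere.

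Combining the upper and lower bounds proves the first limit. The argument uses no independence, since only the divergence-free half of Borel--Cantelli is needed. The main point to watch is the exact computation of the small-ball probability $\mathcal{L}\left(X_{n}\leq\log(1+e^{-\varepsilon\sqrt{n}})\right)$, which the display above handles directly.
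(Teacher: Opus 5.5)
Your argument is correct and complete. The paper gives no proof of this lemma: it explicitly omits it and defers to \cite[Lemma 3.4]{MR4954368}, so there is no in-text route to compare against.

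What you give is a self-contained argument that uses only two facts the paper supplies. The first is that the $X_n$ are rate-$1$ exponentials. The second is the convergence half of \cref{Borel--Cantelli lemma}, so no independence is needed. For the second limit you use the exact tail $\mathcal{L}(X_n\geq\varepsilon\sqrt{n})=e^{-\varepsilon\sqrt{n}}$. For the lower bound in the first limit you use the exact small-ball probability $\mathcal{L}\bigl(X_n\leq\log(1+e^{-\varepsilon\sqrt{n}})\bigr)\leq e^{-\varepsilon\sqrt{n}}$.

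Your reading of the first expression as $\log(e^{X_n}-1)$ is the right one. It is exactly the quantity in the lower bound $\log\frac14+\log(\exp(X_n)-1)$ in the proof of \cref{limit theorems for gap sequences}, which is where the lemma is used. Under the literal reading, $\log(\exp(X_n-1))=X_n-1$, and the first claim would reduce to the second, which you prove anyway.
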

\begin{proof}[Proof of \cref{limit theorems for gap sequences}]
	By \cref{Dn control Dn-1}, for $n\geq2$, we have
	\begin{equation}\nonumber
	D_{n-1}(x)\left(\left(1-\frac{1}{D_{n-1}^{2}(x)}\right)\exp\left(X_{n}(x)\right)-\frac{1}{D_{n-1}(x)}-1\right)\leq D_{n}(x)-D_{n-1}(x)<
		D_{n-1}(x)\exp\left(X_{n}(x)\right).
	\end{equation}
	In addition, by \cref{dn=dn+1 for finitely many n}, for almost all $x\in(0,1)$, $D_{n}(x)-D_{n-1}(x)\geq2$ holds for sufficiently large $n$. Then,
	\begin{equation}\nonumber
		\begin{split}
			D_{n}(x)-D_{n-1}(x)
			&\geq D_{n-1}(x)\cdot\max\left\{\frac{2}{D_{n-1}(x)},\ \left(1-\frac{1}{D_{n-1}^{2}(x)}\right)\exp\left(X_{n}(x)\right)-\frac{1}{D_{n-1}(x)}-1\right\}\\
			&\geq\frac{1}{2}D_{n-1}(x)\left(\left(1-\frac{1}{D_{n-1}^{2}(x)}\right)\exp\left(X_{n}(x)\right)+\frac{1}{D_{n-1}(x)}-1\right)\\
			&\geq\frac{1}{2}D_{n-1}(x)\left(1-\frac{1}{D_{n-1}(x)}\right)\left(\exp\left(X_{n}(x)\right)-1\right)\geq\frac{1}{4}D_{n-1}(x)\left(\exp\left(X_{n}(x)\right)-1\right).
		\end{split}
	\end{equation}
	Hence, for almost all $x\in(0,1)$, for sufficiently large $n$, we have
	\begin{equation}\nonumber
		\log\frac{1}{4}+\log\left(\exp\left(X_{n}(x)\right)-1\right)\leq\log\left(D_{n}(x)-D_{n-1}(x)\right)-\log D_{n-1}(x)<X_{n}(x).
	\end{equation}
	By \cref{lim of Xn divided square root of n}, we get $\log\left(D_{n}(x)-D_{n-1}(x)\right)-\log D_{n}(x)=o(\sqrt{n})$, which implies that $\log D_{n}(x)$ and $\log\left(D_{n}(x)-D_{n-1}(x)\right)$  share the same limit theorems. The proof is completed.
\end{proof}
\subsection{Proof of Borel--Bernstein theorem}\label{Proof of Borel--Bernstein theorem}
\indent We recall that, for all $x\in\mathbb{I}$, the signed Engel expansion of $x$ is given by
\begin{equation}\nonumber
	x=\frac{1}{d_{1}(x)}+\frac{\epsilon_{2}(x)}{d_{1}(x)d_{2}(x)}+\frac{\epsilon_{3}(x)}{d_{1}(x)d_{2}(x)d_{3}(x)}+\cdots.
\end{equation}
For $n\in\mathbb{N}$, it can be checked that
\begin{equation}\nonumber
	T^{n-1}x=\frac{1}{d_{n}(x)}+\frac{s_{n}(x)}{d_{n}(x)d_{n+1}(x)}+\frac{s_{n}(x)s_{n+1}(x)}{d_{n}(x)d_{n+1}(x)d_{n+2}(x)}+\cdots.
\end{equation}
Then, we have $\frac{1}{d_{n}(x)+1}<T^{n-1}x<\frac{1}{d_{n}(x)-1}$. 
Define
\begin{equation}\nonumber
	y_{n}(x)\coloneqq
	\begin{cases}
		x, &\text{ if }n=1,\\
		\left(d_{n-1}(x)-1\right)T^{n-1}x, &\text{ if }n\geq2 \text{ and } \epsilon_{n}(x)=\epsilon_{n-1}(x),\\
		\left(d_{n-1}(x)+1\right)T^{n-1}x, &\text{ if }n\geq2 \text{ and } \epsilon_{n}(x)=-\epsilon_{n-1}(x).
	\end{cases}
\end{equation}

The following lemma shows that the random variable $y_{n}$ follows a uniform distribution on $(0,1)$.
\begin{lemma}\label{the probability of yn}
	Let $0\leq c\leq1$. For any $n\in\mathbb{N}$, we have $\mathcal{L}\left(y_{n}\leq c\right)=c$.
\end{lemma}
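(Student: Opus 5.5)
The plan is to condition on basic intervals of order $n-1$ and show that, on each of them, $y_{n}$ is an affine bijection onto $(0,1)$ once we also fix the sign $\epsilon_{n}$. The case $n=1$ is trivial because $y_{1}(x)=x$. So fix $n\geq2$. Since rationals form a null set and each odd-terminating cylinder is a singleton (\cref{specific form of the basic interval}), it suffices to work on $\mathbb{I}$. There, $\mathbb{I}$ is, up to a null set, the disjoint union of the basic intervals $I_{n-1}(\sigma_{1},\delta_{2},\cdots,\delta_{n-1},\sigma_{n-1})$ with $\sigma_{n-1}=2k$ even.

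\textbf{Step 1 (affinity of $T^{n-1}$).} By \cref{eq:signed Engel expansion} with $n-1$ in place of $n$, for every $x$ in such a cylinder we have
$$x=p_{n-1}+\frac{\epsilon_{n}(x)}{\sigma_{1}\cdots\sigma_{n-1}}T^{n-1}x,\qquad p_{n-1}\coloneqq\frac{1}{\sigma_{1}}+\cdots+\frac{\delta_{n-1}}{\sigma_{1}\cdots\sigma_{n-1}}.$$
Hence $T^{n-1}x=\epsilon_{n}(x)\sigma_{1}\cdots\sigma_{n-1}(x-p_{n-1})$. This is affine in $x$ on each of the two subsets of the cylinder where $\epsilon_{n}$ is constant.

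\textbf{Step 2 (splitting by sign).} Write $t=T^{n-2}x$. On the cylinder, $t\in\left(\frac{1}{2k+1},\frac{1}{2k-1}\right)$ up to finitely many points.
\begin{itemize}
\item If $t\in\left(\frac{1}{2k},\frac{1}{2k-1}\right)$, then $s_{n-1}(x)=1$, so $\epsilon_{n}=\epsilon_{n-1}$. Also $T^{n-1}x=2kt-1$ ranges over $\left(0,\frac{1}{2k-1}\right)$, and $y_{n}=(2k-1)T^{n-1}x$ ranges over all of $(0,1)$.
\item If $t\in\left(\frac{1}{2k+1},\frac{1}{2k}\right)$, then $s_{n-1}(x)=-1$, so $\epsilon_{n}=-\epsilon_{n-1}$. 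Here $T^{n-1}x=1-2kt$ ranges over $\left(0,\frac{1}{2k+1}\right)$, and $y_{n}=(2k+1)T^{n-1}x$ again ranges over $(0,1)$.
\end{itemize}
The sets $\{t>1/2k\}$ and $\{t<1/2k\}$ are subintervals of the cylinder, since $T^{n-2}$ is affine there by the same argument as in Step 1. So the cylinder splits into two open intervals $J^{+}$ and $J^{-}$. On each of them, $y_{n}$ is an affine bijection onto $(0,1)$.

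\textbf{Step 3 (conclusion).} An affine bijection from an interval $J$ onto $(0,1)$ pushes normalized Lebesgue measure on $J$ to Lebesgue measure on $(0,1)$. Therefore $\mathcal{L}(\{y_{n}\leq c\}\cap J^{\pm})=c\,|J^{\pm}|$. Summing over $J^{+}$, $J^{-}$ and over all cylinders of order $n-1$, whose lengths add up to $1$, gives $\mathcal{L}(y_{n}\leq c)=c$.

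The main obstacle is careful bookkeeping in Step 2. We must verify that the ranges really are the full intervals $\left(0,\frac{1}{2k\mp1}\right)$, i.e.\ that every value is attained within the cylinder and no further constraint from earlier digits cuts them down. This follows from the explicit endpoints in \cref{specific form of the basic interval}: the cylinder maps under $T^{n-2}$ onto the whole interval $\left(\frac{1}{2k+1},\frac{1}{2k-1}\right)$, as one checks by comparing lengths. Endpoints and rational points are null and can be ignored.
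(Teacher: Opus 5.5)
Your proposal is correct and follows essentially the same route as the paper: condition on the order-$(n-1)$ cylinders with even last digit, split each according to whether $\epsilon_{n}=\epsilon_{n-1}$ or $\epsilon_{n}=-\epsilon_{n-1}$, observe that $\{y_{n}\leq c\}$ fills a proportion $c$ of each piece (the paper writes these measures as $\frac{c}{\sigma_{1}\cdots\sigma_{n-1}(\sigma_{n-1}\mp1)}$, which sum to $c\,|I_{n-1}|$), and then sum over cylinders. Your length comparison showing that $T^{n-2}$ maps the cylinder onto all of $\left(\frac{1}{2k+1},\frac{1}{2k-1}\right)$ makes explicit a step the paper leaves implicit.
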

\begin{proof}
	When $n=1$, we have $\mathcal{L}\left(y_{1}\leq c\right)=\mathcal{L}\left(x\leq c\right)=c$. Now, suppose that $n\geq2$. For any $\left(\sigma_{1},\delta_{2},\sigma_{2},\cdots,\delta_{n-1},\sigma_{n-1}\right)\in\Sigma_{n-1}^{'}$,
	\begin{equation}\nonumber
		\begin{split}
			&\mathcal{L}\left(\left\{x\in\mathbb{I}\colon d_{1}=\sigma_{1},\cdots,\epsilon_{n-1}=\delta_{n-1},d_{n-1}=\sigma_{n-1},\epsilon_{n}=\epsilon_{n-1},y_{n}\leq c\right\}\right)\\
			=&\mathcal{L}\left(\left\{x\in\mathbb{I}\colon d_{1}=\sigma_{1},\cdots,\epsilon_{n-1}=\delta_{n-1},d_{n-1}=\sigma_{n-1},\epsilon_{n}=\epsilon_{n-1},T^{n-1}x\leq\frac{c}{d_{n-1}(x)-1}\right\}\right)\\
			=&\frac{c}{\sigma_{1}\cdots\sigma_{n-1}\left(\sigma_{n-1}-1\right)}.
		\end{split}
	\end{equation}
	Similarly,
	$$\mathcal{L}\left(\left\{x\in\mathbb{I}\colon d_{1}=\sigma_{1},\cdots,\epsilon_{n-1}=\delta_{n-1},d_{n-1}=\sigma_{n-1},\epsilon_{n}=-\epsilon_{n-1},y_{n}\leq c\right\}\right)=\frac{c}{\sigma_{1}\cdots\sigma_{n-1}\left(\sigma_{n-1}+1\right)}.$$
	It follows that
	\begin{equation}\label{yn intersect basic interval}
		\begin{split}
			&\mathcal{L}\left(\left\{x\in\mathbb{I}\colon d_{1}=\sigma_{1},\cdots,\epsilon_{n-1}=\delta_{n-1},d_{n-1}=\sigma_{n-1},y_{n}\leq c\right\}\right)\\
			=&\frac{c}{\sigma_{1}\cdots\sigma_{n-1}\left(\sigma_{n-1}-1\right)}+\frac{c}{\sigma_{1}\cdots\sigma_{n-1}\left(\sigma_{n-1}+1\right)}\\
			=&c\cdot\left|I_{\left(\sigma_{1},\cdots,\delta_{n-1},\sigma_{n-1}\right)}\right|.
		\end{split}
	\end{equation}
	Hence,
	\begin{equation}\nonumber
		\begin{split}
			\mathcal{L}\left(y_{n}\leq c\right)
			&=\bigcup_{\left(\sigma_{1},\cdots,\delta_{n-1},\sigma_{n-1}\right)\in\Sigma_{n-1}^{'}}\mathcal{L}\left(\left\{x\in\mathbb{I}\colon d_{1}=\sigma_{1},\cdots,\epsilon_{n-1}=\delta_{n-1},d_{n-1}=\sigma_{n-1},y_{n}\leq c\right\}\right)\\
			&=\bigcup_{\left(\sigma_{1},\cdots,\delta_{n-1},\sigma_{n-1}\right)\in\Sigma_{n-1}^{'}}c\cdot\left|I_{\left(\sigma_{1},\cdots,\delta_{n-1},\sigma_{n-1}\right)}\right|=c.
		\end{split}		
	\end{equation}
\end{proof}
The following lemma equivalently transforms a certain type of constraint on $y_{n}$ into a constraint on $d_{n}$. This facilitates the computation of the Lebesgue measure of the set formed by such constraints on $y_{n}$.
\begin{lemma}\label{Equivalent condition transformation}
	Let $r$ be a positive odd number. For any $x\in\mathbb{I}$, the following results hold.
	\begin{enumerate}[(i)]
		\item\label{Equivalent condition}
		$y_{1}(x)\leq\frac{1}{r}\quad\textnormal{if and only if}\quad d_{1}(x)\geq r$.
		\item\label{Equivalent condition transformation for 1} When $n\geq2$ and $\epsilon_{n}(x)=\epsilon_{n-1}(x)$, that is, $s_{n-1}(x)=1$, then $$y_{n}(x)\leq\frac{1}{r}\quad\textnormal{if and only if}\quad d_{n}(x)\geq r\left(d_{n-1}(x)-1\right)=r\left(d_{n-1}(x)-s_{n-1}(x)\right).$$
		\item\label{Equivalent condition transformation for -1} When $n\geq2$ and $\epsilon_{n}(x)=-\epsilon_{n-1}(x)$, that is, $s_{n-1}(x)=-1$, then $$y_{n}(x)\leq\frac{1}{r}\quad\textnormal{if and only if}\quad d_{n}(x)\geq r\left(d_{n-1}(x)+1\right)=r\left(d_{n-1}(x)-s_{n-1}(x)\right).$$
	\end{enumerate}
\end{lemma}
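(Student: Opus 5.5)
The plan is to reduce each of the three statements to the sharp two-sided bound $\frac{1}{d_{n}(x)+1}<T^{n-1}x<\frac{1}{d_{n}(x)-1}$, established just before the definition of $y_{n}$, combined with the parity facts: for $x\in\mathbb{I}$ every $d_{n}(x)$ is even (by \cref{pro:signed Engel Expansion}), while $r$ and $r(d_{n-1}(x)\mp1)$ are odd. The point is that the threshold $\frac{1}{r}$ or $\frac{1}{r(d_{n-1}\mp1)}$ for $T^{n-1}x$ corresponds to an odd value $m$. The even integer $d_{n}$ can never equal $m$, so comparing $T^{n-1}x$ with $\frac1m$ decides exactly whether $d_{n}>m$ or $d_{n}<m$, that is, whether $d_{n}\geq m$ or not.

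\emph{Step (i).} Here $y_{1}=x=T^{0}x$. Suppose $d_{1}(x)\geq r$. Since $d_{1}$ is even and $r$ is odd, $d_{1}\geq r+1$, so $x<\frac{1}{d_{1}-1}\leq\frac1r$. Conversely, suppose $d_{1}\leq r-1$, i.e.\ $d_{1}<r$. Then $x>\frac{1}{d_{1}+1}\geq\frac1r$, so $x>\frac1r$. Together these give $x\leq\frac1r\iff d_{1}\geq r$; in fact the inequality $x\leq \frac1r$ is strict, since $x$ is irrational.

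\emph{Steps (ii) and (iii).} For $n\geq2$ write $y_{n}=(d_{n-1}-s_{n-1})T^{n-1}x$. This matches the definition of $y_n$: when $\epsilon_{n}=\epsilon_{n-1}$ we have $s_{n-1}=\epsilon_{n}\epsilon_{n-1}=1$ (from $\epsilon_{n}=\epsilon_{n-1}s_{n-1}$), and otherwise $s_{n-1}=-1$. Put $m=r(d_{n-1}-s_{n-1})$. Since $d_{n-1}$ is even, $d_{n-1}\mp1$ is odd, so $m$ is a product of odd numbers and hence odd. The condition $y_{n}\leq\frac1r$ becomes $T^{n-1}x\leq\frac1m$, and the argument of Step (i) then applies verbatim with $d_{n}$ in place of $d_{1}$ and $T^{n-1}x$ in place of $x$. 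It yields $T^{n-1}x\leq \frac1m\iff d_{n}\geq m$.

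I expect no real obstacle. The only delicate points are, first, checking that the relation between $s_{n-1}$ and $\epsilon_{n},\epsilon_{n-1}$ matches the case split in the definition of $y_{n}$, and second, verifying the strict inequalities $\frac{1}{d_n+1}<T^{n-1}x<\frac{1}{d_n-1}$ for irrational $x$. Those inequalities hold because $T^{n-1}x$ lies in the open basic interval of \cref{specific form of the basic interval} with $n=1$ applied to $T^{n-1}x$.
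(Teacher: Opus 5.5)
Your proof is correct and takes essentially the same route as the paper. Both rest on the two-sided bound $\frac{1}{d_n+1}<T^{n-1}x<\frac{1}{d_n-1}$ together with the parity mismatch between the even digit $d_n$ and the odd threshold. The only difference is presentational: you fold (ii) and (iii) into (i) through the odd number $m=r(d_{n-1}-s_{n-1})$, whereas the paper proves (ii) directly and calls (iii) analogous.
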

\begin{proof}
	Note that $\frac{1}{x}-1<d_{1}(x)<\frac{1}{x}+1$, $d_{1}$ is even and $r$ is odd. Then, the conclusion \eqref{Equivalent condition} holds.\\
	\indent Now, we prove \eqref{Equivalent condition transformation for 1}. It suffices to verify that $y_{n}(x)\leq\frac{1}{r}$ if and only if $d_{n}(x)\geq r\left(d_{n-1}(x)-1\right)$. Since $y_{n}(x)=\left(d_{n-1}(x)-1\right)T^{n-1}x$ and $T^{n-1}x>\frac{1}{d_{n}(x)+1}$, the necessity is immediate. For sufficiency, assume that $d_{n}(x)\geq r\left(d_{n-1}(x)-1\right)$. Since $d_{n-1}(x)$, $d_{n}(x)$ are even and $r$ is odd, it follows that $d_{n}(x)-1\geq r\left(d_{n-1}(x)-1\right)$. Hence, $$y_{n}(x)=\left(d_{n-1}(x)-1\right)T^{n-1}x<\frac{d_{n-1}(x)-1}{d_{n}(x)-1}\leq\frac{1}{r},$$ 
	where the first inequality follows from $T^{n-1}x<\frac{1}{d_{n}(x)-1}$.\\
	\indent The proof of \eqref{Equivalent condition transformation for -1} is analogous to that of \eqref{Equivalent condition transformation for 1}, and we omit the details.
\end{proof}
The following lemma shows that the sequence $y_{n}$ has a certain degree of independence.
\begin{lemma}\label{the propability of y1...yn}
	Let $\left\{r_{n}\right\}_{n\geq1}$ be a sequence of positive odd numbers. For any $n\in\mathbb{N}$, we have
	\begin{equation}\nonumber
		\mathcal{L}\left(y_{1}\leq\frac{1}{r_{1}},y_{2}\leq\frac{1}{r_{2}},\cdots,y_{n}\leq\frac{1}{r_{n}}\right)=\frac{1}{r_{1}r_{2}\cdots r_{n}}.
	\end{equation}
\end{lemma}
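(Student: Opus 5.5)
We argue by induction on $n$, in a strengthened form. The claim we carry through the induction is: for every $\left(\sigma_{1},\delta_{2},\sigma_{2},\cdots,\delta_{m},\sigma_{m}\right)\in\Sigma_{m}^{'}$, the measure of the part of $\{y_{m+1}\leq\frac{1}{r}\}$ inside the basic interval of order $m$ is $\frac1r$ times the length of that interval. This is exactly what \cref{yn intersect basic interval} in the proof of \cref{the probability of yn} gives with $c=\frac1r$, and it holds for every $m\geq1$. Since basic intervals of order $m$ with even last digit partition $\mathbb{I}$ up to a null set, the claim amounts to saying that $\{y_{m+1}\le 1/r\}$ is independent of the $\sigma$-algebra generated by the first $m$ digits and signs.

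The key step is to show that the event $E_{m}\coloneqq\{y_{1}\le 1/r_1,\dots,y_{m}\le 1/r_m\}$ is, up to a null set, a union of basic intervals of order $m$. For this we use \cref{Equivalent condition transformation}. Condition (i) says $y_1\le 1/r_1$ iff $d_1\ge r_1$. For $k\geq2$, conditions (ii) and (iii) say $y_k\le 1/r_k$ iff $d_k\ge r_k(d_{k-1}-s_{k-1})$. Here $s_{k-1}=\epsilon_k\epsilon_{k-1}$, so each of these conditions depends only on $(d_{k-1},\epsilon_{k-1},\epsilon_k,d_k)$. Hence $E_m$ is determined by $(d_1,\epsilon_2,d_2,\dots,\epsilon_m,d_m)$, that is, $E_m=\bigcup I_m(\sigma_1,\dots,\sigma_m)$ over an admissible set of words $A_m\subset\Sigma_m^{'}$ (irrationals only; the sets $\Sigma_m\setminus\Sigma_m^{'}$ contribute measure zero).

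The induction then runs as follows. The base case $n=1$ is \cref{the probability of yn}. For the inductive step we write
$$\mathcal{L}(E_{n+1})=\sum_{w\in A_n}\mathcal{L}\left(I_n(w)\cap\{y_{n+1}\le 1/r_{n+1}\}\right)=\frac{1}{r_{n+1}}\sum_{w\in A_n}|I_n(w)|=\frac{\mathcal{L}(E_n)}{r_{n+1}},$$
using the strengthened claim (\cref{yn intersect basic interval}) on each $I_n(w)$. The induction hypothesis $\mathcal{L}(E_n)=\frac{1}{r_1\cdots r_n}$ then finishes the proof.

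I expect the main obstacle to be checking that \cref{yn intersect basic interval} really is valid on each basic interval and for all admissible words, including the sign bookkeeping. In the case $\epsilon_{n+1}=\epsilon_n$ the relevant set is $\{T^{n}x\le c/(\sigma_n-1)\}$ within the sub-branch $(\frac{1}{\sigma_n},\frac{1}{\sigma_n-1})$. In the case of a sign flip the relevant set lies within the branch $(\frac{1}{\sigma_n+1},\frac1{\sigma_n})$. One has to confirm that these pieces carry measures $\frac{c}{\sigma_1\cdots\sigma_n(\sigma_n\mp1)}$, using that $T^{n}$ restricted to the basic interval is affine with slope $\pm\sigma_1\cdots\sigma_n$. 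Once this is accepted from the previous lemma's proof, the rest is the measurability bookkeeping described above.
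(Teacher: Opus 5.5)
Your proposal is correct and matches the paper's proof essentially step for step: induction on $n$, with \cref{Equivalent condition transformation} (and $s_{k-1}=\epsilon_k\epsilon_{k-1}$) used to write $\{y_1\le 1/r_1,\dots,y_n\le 1/r_n\}$ as a union of order-$n$ basic intervals, followed by \cref{yn intersect basic interval} applied on each such interval and summed. One small imprecision: $T^{n}$ is affine with slope $\pm\sigma_1\cdots\sigma_n$ only on each half $I_n\cap\{\epsilon_{n+1}=\pm\epsilon_n\}$, not on all of $I_n$, but that is all your measure computation actually uses.
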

\begin{proof}
	The proof is carried out by induction. When $n=1$, $\mathcal{L}\left(y_{1}\leq\frac{1}{r_{1}}\right)=\mathcal{L}\left(x\leq\frac{1}{r_{1}}\right)=\frac{1}{r_{1}}$.\\
	\indent Now, assume that $\mathcal{L}\left(y_{1}\leq\frac{1}{r_{1}},y_{2}\leq\frac{1}{r_{2}},\cdots,y_{n}\leq\frac{1}{r_{n}}\right)=\frac{1}{r_{1}r_{2}\cdots r_{n}}$. By \cref{Equivalent condition transformation}, we have 
	\begin{equation}\nonumber
		\mathcal{L}\left(y_{1}\leq\frac{1}{r_{1}},\cdots,y_{n+1}\leq\frac{1}{r_{n+1}}\right)=\mathcal{L}\left(\left\{x\in\mathbb{I}\colon d_{1}\geq r_{1},\cdots,d_{n}\geq r_{n}\left(d_{n-1}-s_{n-1}\right),y_{n+1}\leq \frac{1}{r_{n+1}}\right\}\right).
	\end{equation}
	By \cref{yn intersect basic interval}, we have
	\begin{equation}\nonumber
		\begin{split}
			\mathcal{L}\left(y_{1}\leq\frac{1}{r_{1}},\cdots,y_{n+1}\leq\frac{1}{r_{n+1}}\right)
			&=\sum_{\left(\sigma_{1},\cdots,\delta_{n},\sigma_{n}\right)}\mathcal{L}\left(\left\{x\in\mathbb{I}\colon d_{1}=\sigma_{1},\cdots,\epsilon_{n}=\delta_{n},d_{n}=\sigma_{n},y_{n+1}\leq\frac{1}{r_{n+1}}\right\}\right)\\
			&=\sum_{\left(\sigma_{1},\cdots,\delta_{n},\sigma_{n}\right)}\frac{1}{r_{n+1}}\left|I_{\left(\sigma_{1},\cdots,\delta_{n},\sigma_{n}\right)}\right|\\
			&=\sum_{\left(\sigma_{1},\cdots,\delta_{n},\sigma_{n}\right)}\frac{1}{r_{n+1}}\mathcal{L}\left(\left\{x\in\mathbb{I}\colon d_{1}=\sigma_{1},\cdots,\epsilon_{n}=\delta_{n},d_{n}=\sigma_{n}\right\}\right)\\
			&=\frac{1}{r_{n+1}}\mathcal{L}\left(y_{1}\leq\frac{1}{r_{1}},\cdots,y_{n}\leq\frac{1}{r_{n}}\right)=\frac{1}{r_{1}r_{2}\cdots r_{n}r_{n+1}},
		\end{split}
	\end{equation}
	where the sums are taken over all $\left(\sigma_{1},\cdots,\delta_{n},\sigma_{n}\right)\in\Sigma_{n}^{'}$ satisfying $\sigma_{k}\geq r_{k}\left(\sigma_{k-1}-\frac{\delta_{n}}{\delta_{n-1}}\right)$ for all $2\leq k\leq n$ and $\sigma_{1}\geq r_{1}$.
	It follows by induction that the conclusion holds.
\end{proof}
The following result shows that $\left\{R_{n}\right\}_{n\geq1}$ is a sequence of approximately independent and identically distributed random variables.
\begin{lemma}\label{definition property Yn}
	For any $n\in\mathbb{N}$, let $Y_{n}$ be a function defined on $\mathbb{I}$, satisfying for any $x\in\mathbb{I}$:
	\begin{enumerate}[(i)]
		\item\label{Yn odd} $Y_{n}(x)$ is odd,
		\item if $n=1$, then $Y_{1}(x)\leq d_{1}(x)<Y_{1}(x)+2$,
		\item if $n\geq2$ and $\epsilon_{n}(x)=\epsilon_{n-1}(x)$, then $Y_{n}(x)\leq\frac{d_{n}(x)}{d_{n-1}(x)-1}<Y_{n}(x)+2$, and
		\item if $n\geq2$ and $\epsilon_{n}(x)=-\epsilon_{n-1}(x)$, then $Y_{n}(x)\leq\frac{d_{n}(x)}{d_{n-1}(x)+1}<Y_{n}(x)+2$.
	\end{enumerate}
	Then,
	\begin{enumerate}[(1)]
		\item\label{propability of Yn} $\mathcal{L}\left(Y_{n}\geq2k-1\right)=\frac{1}{2k-1}$ for any $n\in\mathbb{N}$ and $k\in\mathbb{N}$,
		\item\label{control of Yn} $\frac{1}{t+2}<\mathcal{L}\left(Y_{n}\geq t\right)\leq\frac{1}{t}$ for any $n\in\mathbb{N}$ and $t>0$, and
		\item\label{Yn independent} $\left\{Y_{n}\right\}_{n\geq1}$ is a sequence of independent and identically distributed random variables.
	\end{enumerate}
\end{lemma}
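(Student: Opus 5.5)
The plan is to reduce every statement about $Y_n$ to statements about $y_n$ through the equivalence $Y_n\geq 2k-1 \iff y_n\leq \frac{1}{2k-1}$, and then to quote \cref{the probability of yn} and \cref{the propability of y1...yn}.

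\textbf{Step 1: the key equivalence.} Fix $x\in\mathbb{I}$ and $n\geq2$, and write $m_n(x)\coloneqq d_{n-1}(x)-s_{n-1}(x)$. This equals $d_{n-1}(x)-1$ when $\epsilon_n=\epsilon_{n-1}$ and $d_{n-1}(x)+1$ otherwise. Conditions (iii) and (iv) say that $Y_n$ is the unique odd integer with $Y_n\leq d_n/m_n<Y_n+2$. For $n=1$ we set $m_1\coloneqq1$.

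Let $r=2k-1$, which is odd. If $Y_n\geq r$, then $d_n/m_n\geq r$. Conversely, if $d_n/m_n\geq r$, then $Y_n\geq r$: otherwise $Y_n\leq r-2$, which would give $d_n/m_n<Y_n+2\leq r$, a contradiction. Hence $Y_n\geq 2k-1$ if and only if $d_n\geq (2k-1)m_n$. By \cref{Equivalent condition transformation}, this holds if and only if $y_n\leq\frac{1}{2k-1}$.

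\textbf{Step 2: part (1).} By \cref{the probability of yn},
\[
\mathcal{L}(Y_n\geq 2k-1)=\mathcal{L}\Bigl(y_n\leq\tfrac{1}{2k-1}\Bigr)=\tfrac{1}{2k-1}.
\]

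\textbf{Step 3: part (2).} Given $t>0$, let $2k-1$ be the smallest odd positive integer with $2k-1\geq t$, so that $t\leq 2k-1<t+2$. Since $Y_n$ takes only odd values, $\{Y_n\geq t\}=\{Y_n\geq 2k-1\}$. By part (1) this set has measure $\frac{1}{2k-1}$, which lies in $\bigl(\frac{1}{t+2},\frac{1}{t}\bigr]$.

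\textbf{Step 4: part (3), the main step.} By Step 1,
\[
\bigcap_{i=1}^n\{Y_i\geq r_i\}=\bigcap_{i=1}^n\Bigl\{y_i\leq\tfrac{1}{r_i}\Bigr\}
\]
for every choice of positive odd $r_1,\dots,r_n$. By \cref{the propability of y1...yn}, this intersection has measure $\prod_{i=1}^n\frac{1}{r_i}=\prod_{i=1}^n\mathcal{L}(Y_i\geq r_i)$.

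Each $Y_i$ takes values in the odd positive integers, since $d_1\geq 2$ and $d_n\geq m_n$ force $Y_n\geq1$. Its distribution is therefore determined by the upper tails $\mathcal{L}(Y_i\geq r)$. To pass from upper tails to point masses, write
\[
\mathbf{1}\{Y_i=r\}=\mathbf{1}\{Y_i\geq r\}-\mathbf{1}\{Y_i\geq r+2\}.
\]
Take the product of these identities over $i=1,\dots,n$, expand it, and apply the product formula to each of the $2^n$ terms. Inclusion--exclusion then gives
\[
\mathcal{L}(Y_1=a_1,\dots,Y_n=a_n)=\prod_{i=1}^n\mathcal{L}(Y_i=a_i),
\]
which is independence. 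Part (1) shows that all the $Y_n$ have the same distribution, so they are identically distributed.

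I expect Step 4 to be the main obstacle, specifically making sure that the cited lemma really yields the joint tail product. I would also verify that it applies on the full-measure set $\mathbb{I}$ without any hidden constraint on the earlier digits. That is consistent with how the lemma sums over basic intervals.
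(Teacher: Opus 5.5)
Your proposal is correct and follows essentially the same route as the paper. The paper also converts $\{Y_n\ge 2k-1\}$ into $\{y_n\le \frac{1}{2k-1}\}$ via \cref{Equivalent condition transformation}, gets (1)--(2) from \cref{the probability of yn}, and gets (3) from the joint tail product in \cref{the propability of y1...yn}. Your inclusion--exclusion step only spells out the passage from joint upper tails to independence of the point masses, which the paper states without detail.
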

\begin{proof}
	Let $l\in\mathbb{N}$. When $l$ is odd, combining the definition of $Y_{n}$ with \cref{Equivalent condition transformation}, we can obtain $Y_{n}(x)\geq l$ if and only if $y_{n}(x)\leq\frac{1}{l}$ for any $x\in\mathbb{I}$. Similarly, when $l$ is even, $Y_{n}(x)\geq l$ if and only if $y_{n}(x)\leq \frac{1}{l+1}$ for any $x\in\mathbb{I}$. Then, by \cref{the probability of yn}, we have
	\begin{equation}\nonumber
		\mathcal{L}\left(Y_{n}\geq2k-1\right)=\mathcal{L}\left(y_{n}\leq\frac{1}{2k-1}\right)=\frac{1}{2k-1}.
	\end{equation} 
	Hence, $\left\{Y_{n}\right\}_{n\geq1}$ is identically distributed, and conclusion \eqref{propability of Yn} holds. Since $Y_{n}(x)$ is odd, it follows that \eqref{propability of Yn} implies \eqref{control of Yn}.
	We now prove that $\left\{Y_{n}\right\}_{n\geq1}$ are independent. It suffices to check that when $l_{1},\cdots,l_{n}$ are all odd, we have
	$$\mathcal{L}\left(Y_{1}\geq l_{1},Y_{2}\geq l_{2},\cdots,Y_{n}\geq l_{n}\right)=\mathcal{L}\left(Y_{1}\geq l_{1}\right)\mathcal{L}\left(Y_{2}\geq l_{2}\right)\cdots\mathcal{L}\left(Y_{n}\geq l_{n}\right).$$
	By \cref{the propability of y1...yn}, we have 
	$$\mathcal{L}\left(Y_{1}\geq l_{1},Y_{2}\geq l_{2},\cdots,Y_{n}\geq l_{n}\right)=\mathcal{L}\left(y_{1}\leq\frac{1}{l_{1}},y_{2}\leq\frac{1}{l_{2}}\cdots,y_{n}\leq\frac{1}{l_{n}}\right)=\frac{1}{l_{1}l_{2}\cdots l_{n}},$$
	which implies independence.
\end{proof}
\begin{proof}[Proof of \cref{Borel-Bernstain thm of ratio form}]
	First, assume that $\sum_{n=1}^{\infty}\frac{1}{\phi(n)}<\infty$. For any $x\in\mathbb{I}$ and $n\geq2$, we have
	\begin{equation}\nonumber
		Y_{n}(x)+2>\frac{d_{n}(x)}{d_{n-1}(x)+1}>\frac{d_{n}(x)}{2d_{n-1}(x)}=\frac{R_{n}(x)}{2},
	\end{equation}
	which implies that
	\begin{equation}\nonumber
		\left\{x\in\mathbb{I}\colon R_{n}(x)\geq\phi(n)\right\}\subset\left\{x\in\mathbb{I}\colon Y_{n}(x)\geq\frac{\phi(n)}{6}\right\}.
	\end{equation}
	It follows from \cref{definition property Yn} \eqref{control of Yn} that
	\begin{equation}\nonumber
		\sum_{n=1}^{\infty}\mathcal{L}\left(\left\{x\in\mathbb{I}\colon R_{n}(x)\geq\phi(n)\right\}\right)\leq\sum_{n=1}^{\infty}\mathcal{L}\left(\left\{x\in\mathbb{I}\colon Y_{n}(x)\geq\frac{\phi(n)}{6}\right\}\right)\leq\sum_{n=1}^{\infty}\frac{6}{\phi(n)}<\infty,
	\end{equation}
	By \cref{Borel--Cantelli lemma}, we get $\mathcal{L}\left(R(\phi)\right)=0$.\\
	\indent Now, suppose $\sum_{n=1}^{\infty}\frac{1}{\phi(n)}=\infty$. By \cref{definition property Yn} \eqref{control of Yn}, we get
	\begin{equation}\nonumber
		\sum_{n=1}^{\infty}\mathcal{L}\left(\left\{x\in\mathbb{I}\colon Y_{n}(x)\geq2\phi(n)\right\}\right)\geq\sum_{n=1}^{\infty}\frac{1}{2\phi(n)+2}=\infty.
	\end{equation}
	By \cref{definition property Yn} \eqref{Yn independent} and \cref{Borel--Cantelli lemma}, we get $\mathcal{L}\left(\left\{x\in\mathbb{I}\colon Y_{n}(x)\geq2\phi(n),\ \textnormal{  i.m. }n\right\}\right)=1$.
	For any $x\in\mathbb{I}$ and $n\geq2$, we have
	\begin{equation}\nonumber
		2R_{n}(x)=\frac{2d_{n}(x)}{d_{n-1}(x)}\geq\frac{d_{n}(x)}{d_{n-1}(x)-1}\geq Y_{n}(x),
	\end{equation}
	which implies that 
	\begin{equation}\nonumber
		\left\{x\in\mathbb{I}\colon Y_{n}(x)\geq2\phi(n),\ \textnormal{  i.m. }n\right\}\subset\left\{x\in\mathbb{I}\colon R_{n}(x)\geq\phi(n),\ \textnormal{  i.m. }n\right\}.
	\end{equation}
	Hence, $\mathcal{L}\left(R(\phi)\right)=1$.
\end{proof}
\begin{proof}[Proof of \cref{lebesguemeasureoflimsupRn/phin}]
	Let $K$ be an arbitrary positive number. If $\sum_{n=1}^{\infty}\frac{1}{\phi(n)}<\infty$, then, by replacing $\phi(n)$
	with $\frac{\phi(n)}{K}$ in \cref{Borel-Bernstain thm of ratio form}, we have $\mathcal{L}\left\{x\in\mathbb{I}\colon R_{n}(x)\geq \frac{\phi(n)}{K},\textnormal{ i.m. }n\right\}=0$. It follows $\mathcal{L}\left\{x\in\mathbb{I}\colon\varlimsup_{n\to\infty}\frac{R_{n}(x)}{\phi(n)}>\frac{1}{K}\right\}=0$. By the arbitrariness of $K$, we get $\mathcal{L}\left\{x\in\mathbb{I}\colon\varlimsup_{n\to\infty}\frac{R_{n}(x)}{\phi(n)}>0\right\}=0$, which implies
	\begin{equation}\nonumber
		\mathcal{L}\left\{x\in\mathbb{I}\colon\varlimsup_{n\to\infty}\frac{R_{n}(x)}{\phi(n)}=0\right\}=1.
	\end{equation}
	If $\sum_{n=1}^{\infty}\frac{1}{\phi(n)}=\infty$, then we have $\mathcal{L}\left\{x\in\mathbb{I}\colon R_{n}(x)\geq K\phi(n),\textnormal{ i.m. }n\right\}=1$ by replacing $\phi(n)$
	with $K\phi(n)$ in \cref{Borel-Bernstain thm of ratio form}. It follows that $\mathcal{L}\left\{x\in\mathbb{I}\colon\varlimsup_{n\to\infty}\frac{R_{n}(x)}{\phi(n)}\geq K\right\}=1$.
	By the arbitrariness of $K$, we obtain 
	\begin{equation}\nonumber
		\mathcal{L}\left\{x\in\mathbb{I}\colon\varlimsup_{n\to\infty}\frac{R_{n}(x)}{\phi(n)}=\infty\right\}=1.
	\end{equation}
\end{proof}
\begin{proof}[Proof of \cref{Borel--Cantelli lemma for Mphi}]
	Note that $R\left(\phi\right)\subset M\left(\phi\right)$. If $\sum_{n=1}^{\infty}\frac{1}{\phi(n)}=\infty$, then, by \cref{Borel-Bernstain thm of ratio form}, we have $\mathcal{L}\left(M\left(\phi\right)\right)\geq\mathcal{L}\left(R(\phi)\right)=1$.\\
	\indent Now, assume that $\sum_{n=1}^{\infty}\frac{1}{\phi(n)}<\infty$. Using the conditions that $\phi$ is non-decreasing and $\sum_{n=1}^{\infty}\frac{1}{\phi(n)}<\infty$, we can verify that $M\left(\phi\right)\subset R\left(\phi\right)$. Then, by \cref{Borel-Bernstain thm of ratio form}, $\mathcal{L}\left(M\left(\phi\right)\right)=0$.
\end{proof}
\begin{proof}[Proof of \cref{lebesguemeasureoflimsupMn/phin}]
	Using \cref{Borel--Cantelli lemma for Mphi}, one can establish the desired results in complete analogy with the proof of \cref{lebesguemeasureoflimsupRn/phin}. We omit the details.
\end{proof}
\subsection{Proofs of \cref{limsuplogRn-logn/loglogn,liminflogRn-logn/loglogn}}
\begin{proof}[Proof of \cref{limsuplogRn-logn/loglogn}]
	For any $0<\varepsilon<1$, by $\sum_{n=1}^{\infty}\frac{1}{n(\log n)^{1-\varepsilon}}=\infty$ and $\sum_{n=1}^{\infty}\frac{1}{n(\log n)^{1+\varepsilon}}<\infty$, we get, from \cref{Borel-Bernstain thm of ratio form}, 
	\begin{equation}\nonumber
		\mathcal{L}\left\{x\in\mathbb{I}\colon R_{n}(x)\geq n(\log n)^{1-\varepsilon},\textnormal{ i.m. }n\right\}=1,
	\end{equation}
	and
	\begin{equation}\nonumber
		\mathcal{L}\left\{x\in\mathbb{I}\colon R_{n}(x)\geq n(\log n)^{1+\varepsilon},\textnormal{ i.m. }n\right\}=0.
	\end{equation}
	It follows that for Lebesgue almost all $x\in\mathbb{I}$, 
	\begin{equation}\nonumber
		1-\varepsilon\leq\varlimsup_{n\to\infty}\frac{\log R_{n}(x)-\log n}{\log\log n}\leq1+\varepsilon.
	\end{equation}
	By the arbitrariness of $\varepsilon$, for Lebesgue almost all $x\in\mathbb{I}$, we have
	\begin{equation}\nonumber
		\varlimsup_{n\to\infty}\frac{\log R_{n}(x)-\log n}{\log\log n}=1.
	\end{equation}
	Note that both $n(\log n)^{1-\varepsilon}$ and $n(\log n)^{1+\varepsilon}$ are non-decreasing in $n$. Using \cref{Borel--Cantelli lemma for Mphi}, we can obtain $\varlimsup_{n\to\infty}\frac{\log M_{n}(x)-\log n}{\log\log n}=1$ for Lebesgue almost all $x\in(0,1)$.
\end{proof}
\begin{proof}[Proog of \cref{liminflogRn-logn/loglogn}]
	By \cref{definition property Yn}, we get, for any $n\in\mathbb{N}$, 
	$$\mathcal{L}(Y_{n}=1)=\mathcal{L}(Y_{n}\geq1)-\mathcal{L}(Y_{n}\geq3)=\frac{2}{3},$$
	which implies that $\sum_{n=1}^{\infty}\mathcal{L}(Y_{n}=1)=\infty$. Since 
	$\left\{Y_{n}\right\}_{n\geq1}$ is a sequence of independent random variables, it follows from \cref{Borel--Cantelli lemma} that
	\begin{equation}\nonumber
		\mathcal{L}\left\{x\in\mathbb{I}\colon Y_{n}(x)=1,\textnormal{ i.m. }n\right\}=1.
	\end{equation}
	According to \cref{definition property Yn}, note that $R_{n}(x)\leq5$ when $Y_{n}(x)=1$ for any $n\in\mathbb{N}$. Then, for Lebesgue almost all $x\in(0,1)$, we have 
	$$\varliminf_{n\to\infty}\frac{\log R_{n}(x)-\log n}{\log\log n}=-\infty.$$
	
	Now we prove the remaining conclusion. First, for any $x\in\mathbb{I}$ and $n\in\mathbb{N}$, we define
	\begin{equation}\nonumber
		U_{n}(x)\coloneqq\max\left\{Y_{i}(x)\colon1\leq i\leq n\right\}.
	\end{equation}
	By the definition of $Y_{n}$, one can verify that $\frac{1}{2}Y_{n}(x)\leq R_{n}(x)<2(Y_{n}(x)+2)\leq6Y_{n}(x)$ for any $x\in\mathbb{I}$ and $n\in\mathbb{N}$, which implies that 
	$\frac{1}{2}U_{n}(x)\leq M_{n}(x)<6U_{n}(x)$. Hence, it suffices to verify that for Lebesgue almost all $x\in\mathbb{I}$, we have
	\begin{equation}\nonumber
		\varliminf_{n\to\infty}\frac{\log U_{n}(x)-\log n}{\log\log n}=0.
	\end{equation}
	We split into two parts.\\
	\indent {\sc Upper bound}. Fix $\varepsilon>0$. For any $n\in\mathbb{N}$, define
	\begin{equation}\nonumber
		B_{n}\coloneqq\left\{U_{n}>n(\log n)^{\varepsilon}\right\}.
	\end{equation}
	By \cref{definition property Yn}, we have
	\begin{equation}\nonumber
		\mathcal{L}(B_{n})=\mathcal{L}\left(\bigcup_{i=1}^{n}\left\{Y_{i}>n(\log n)^{\varepsilon}\right\}\right)\leq n\cdot\mathcal{L}\left(\left\{Y_{1}>n(\log n)^{\varepsilon}\right\}\right)\leq n\cdot\frac{1}{n(\log n)^{\varepsilon}}=\frac{1}{(\log n)^{\varepsilon}}.
	\end{equation}
	Take $\alpha>\frac{1}{\varepsilon}$. For any $k\in\mathbb{N}$, let $n_{k}\coloneqq\left\lceil e^{k^{\alpha}}\right\rceil$. Then, we have
	\begin{equation}\nonumber
		\mathcal{L}(B_{n_{k}})\leq\frac{1}{(\log n_{k})^{\varepsilon}}\leq\frac{1}{k^{\alpha\varepsilon}},
	\end{equation}
	which implies that $\sum_{k=1}^{\infty}\mathcal{L}(B_{n_{k}})<\infty$. By \cref{Borel--Cantelli lemma}, we get
	\begin{equation}\nonumber
		\mathcal{L}\left(\left\{B_{n_{k}}\ \ \textnormal{i.m.}\ k\right\}\right)=\mathcal{L}\left(\left\{U_{n_{k}}>n_{k}(\log n_{k})^{\varepsilon},\ \ \textnormal{i.m.}\ k\right\}\right)=0.
	\end{equation}
	That is, there exists $\Omega_{\varepsilon}\subset\mathbb{I}$, such that $\mathcal{L}(\Omega_{\varepsilon})=1$ and for each $x\in\Omega_{\varepsilon}$, we have $U_{n_{k}}(x)\leq n_{k}(\log n_{k})^{\varepsilon}$ for all sufficiently large $k$. Hence, for any $x\in\Omega_{\varepsilon}$, we get 
	\begin{equation}\nonumber
		\varliminf_{n\to\infty}\frac{\log U_{n}(x)-\log n}{\log\log n}\leq\varliminf_{k\to\infty}\frac{\log U_{n_{k}}(x)-\log n_{k}}{\log\log n_{k}}\leq\varepsilon.
	\end{equation}
	For any $j\in\mathbb{N}$, replace $\varepsilon$ by $\frac{1}{j}$ and let $\Omega_{0}\coloneqq\bigcap_{j=1}^{\infty}\Omega_{\frac{1}{j}}$. One can check that $\mathcal{L}(\Omega_{0})=1$ and for any $y\in\Omega_{0}$, 
	\begin{equation}\nonumber
		\varliminf_{n\to\infty}\frac{\log U_{n}(y)-\log n}{\log\log n}\leq0.
	\end{equation}
	\indent {\sc Lower bound}. Fix $\varepsilon>0$. For any $n\in\mathbb{N}$, define
	\begin{equation}\nonumber
		C_{n}=\left\{U_{n}<\frac{n}{(\log n)^{\varepsilon}}\right\}.
	\end{equation}
	By \cref{definition property Yn}, we have
	\begin{equation}\nonumber
		\mathcal{L}(C_{n})=\mathcal{L}\left(\bigcap_{i=1}^{n}\left\{Y_{i}<\frac{n}{(\log n)^{\varepsilon}}\right\}\right)=\left(\mathcal{L}\left\{Y_{1}<\frac{n}{(\log n)^{\varepsilon}}\right\}\right)^{n}\leq\left(1-\frac{1}{\frac{n}{(\log n)^{\varepsilon}}+2}\right)^{n}.
	\end{equation}
	For any $k\in\mathbb{N}$, let $m_{k}\coloneqq\lfloor e^{k}\rfloor$. By $\lim_{k\to\infty}\frac{\mathcal{L}(C_{m_{k}})}{e^{-k^{\varepsilon}}}=1$ and $\sum_{k=1}^{\infty}e^{-k^{\varepsilon}}<\infty$, then, $\sum_{k=1}^{\infty}\mathcal{L}(C_{m_{k}})<\infty$. It follows from \cref{Borel--Cantelli lemma} that
	\begin{equation}\nonumber
		\mathcal{L}\left(\left\{C_{m_{k}}\ \ \textnormal{i.m.}\ k\right\}\right)=\mathcal{L}\left(\left\{U_{m_{k}}<\frac{m_{k}}{(\log m_{k})^{\varepsilon}},\ \ \textnormal{i.m.}\ k\right\}\right)=0.
	\end{equation}
	That is, there exists $\Delta_{\varepsilon}\subset\mathbb{I}$, such that $\mathcal{L}(\Delta_{\varepsilon})=1$ and for each $x\in\Delta_{\varepsilon}$, we have $U_{m_{k}}(x)\geq \frac{m_{k}}{(\log m_{k})^{\varepsilon}}$ for all sufficiently large $k$. When $n$ is sufficiently large, the integer $k$ satisfying $m_{k}\leq n<m_{k+1}$ is also sufficiently large. Then, for any $x\in\Delta_{\varepsilon}$ and sufficiently large $n$, we have 
	\begin{equation}\nonumber
		U_{n}(x)\geq U_{m_{k}}(x)\geq\frac{m_{k}}{(\log m_{k})^{\varepsilon}}\geq\frac{m_{k+1}}{e^{2}\cdot(\log m_{k})^{\varepsilon}}>\frac{n}{e^{2}\cdot(\log n)^{\varepsilon}},
	\end{equation}
	which implies that 
	\begin{equation}\nonumber
		\varliminf_{n\to\infty}\frac{\log U_{n}(x)-\log n}{\log\log n}\geq-\varepsilon.
	\end{equation}
	For any $j\in\mathbb{N}$, substitute $\frac{1}{j}$ for $\varepsilon$, and let $\Delta_{0}\coloneqq\bigcap_{j=1}^{\infty}\Delta_{\frac{1}{j}}$. One can check that $\mathcal{L}(\Delta_{0})=1$ and for any $y\in\Delta_{0}$, 
	\begin{equation}\nonumber
		\varliminf_{n\to\infty}\frac{\log U_{n}(y)-\log n}{\log\log n}\geq0.
	\end{equation}
	The proof is completed.
\end{proof}
\begin{proof}[Proof of \cref{limlogMn/logn}]
	Combining \cref{limsuplogRn-logn/loglogn} and \cref{liminflogRn-logn/loglogn}, the conclusion is immediate.
\end{proof}

\noindent\textbf{Acknowledgements } I am grateful to my advisor, Professor Lingmin Liao, for many helpful suggestions. I am also grateful to Professor Lulu Fang for providing reference \cite{MR1180497}.
   
\noindent\textbf{Data Availability } Data sharing notapplicable to this article as no datasets were generated or analysed during the current study.

\noindent\textbf{Funding } No funding was received for this study.

\section*{Declarations}
\noindent\textbf{Conflicts of Interest } The author declares no conflict of interest.

\bibliographystyle{plain}
\bibliography{bmyref.bib}
\end{document}